\theoremstyle{definition}
\newtheorem{ntn}{Notation}[section]
\newtheorem{dfn}[ntn]{Definition}
\theoremstyle{plain}
\newtheorem{lem}[ntn]{Lemma}
\newtheorem{prp}[ntn]{Proposition}
\newtheorem{thm}[ntn]{Theorem}
\newtheorem{cor}[ntn]{Corollary}
\newtheorem{cnj}[ntn]{Conjecture}
\theoremstyle{remark}
\newtheorem*{clm}{Claim}
\newtheorem{que}[ntn]{Question}
\newtheorem{rmk}[ntn]{Remark}
\newtheorem{exa}[ntn]{Example}
\newcommand{\del}{\partial}
\newcommand{\dff}{\frac{\de f}{f}}
\newcommand{\de}{{\mathrm d}}
\newcommand{\dR}{{\mathrm dR}}
\newcommand{\eps}{\varepsilon}
\newcommand{\into}{\hookrightarrow}
\newcommand{\onto}{\twoheadrightarrow}
\renewcommand{\to}{\longrightarrow}
\newcommand{\nil}{{\mathrm{nil}}}
\newcommand{\red}{{\mathrm{red}}}
\newcommand{\reg}{{\mathrm{reg}}}
\newcommand{\minus}{\smallsetminus}
\newcommand{\ydx}{y\, {\mathrm d}x}
\newcommand{\calA}{\mathscr{A}}
\newcommand{\calC}{\mathscr{C}}
\newcommand{\calD}{\mathscr{D}}
\newcommand{\calI}{\mathscr{I}}
\newcommand{\calK}{\mathscr{K}}
\newcommand{\calL}{\mathscr{L}}
\newcommand{\calM}{\mathscr{M}}
\newcommand{\calO}{\mathscr{O}}
\newcommand{\calP}{\mathscr{P}}
\newcommand{\calR}{\mathscr{R}}
\newcommand{\frakm}{{\mathfrak{m}}}
\newcommand{\frakq}{{\mathfrak{q}}}
\newcommand{\frakn}{{\mathfrak{n}}}
\newcommand{\frakS}{{\mathfrak{S}}}
\newcommand{\CC}{\mathbb{C}}
\newcommand{\NN}{\mathbb{N}}
\newcommand{\PP}{\mathbb{P}}
\newcommand{\QQ}{\mathbb{Q}}
\newcommand{\ZZ}{\mathbb{Z}}
\newcommand{\x}{{\mathfrak x}}
\renewcommand{\bar}{\overline}
\DeclareMathOperator{\ann}{ann}
\DeclareMathOperator{\codim}{codim}
\DeclareMathOperator{\depth}{depth}
\DeclareMathOperator{\Der}{Der}
\DeclareMathOperator{\Div}{Div}
\DeclareMathOperator{\charVar}{charV}
\DeclareMathOperator{\gr}{gr}
\DeclareMathOperator{\hight}{ht}
\DeclareMathOperator{\Hom}{Hom}
\DeclareMathOperator{\image}{im}
\DeclareMathOperator{\Jac}{Jac}
\DeclareMathOperator{\lcm}{lcm}
\DeclareMathOperator{\modulo}{mod\,}
\DeclareMathOperator{\pdim}{pdim}
\DeclareMathOperator{\Proj}{Proj}
\DeclareMathOperator{\rk}{rk}
\DeclareMathOperator{\sgn}{sgn}
\DeclareMathOperator{\Spec}{Spec}
\DeclareMathOperator{\Sym}{Sym}
\DeclareMathOperator{\Var}{Var}
\def\C{{C}}
\def\schluss{\hfill\ensuremath{\diamond}}
\def\mustata{Musta\c t\u a}
\def\comment#1{}
\def\logarithmic{{Liouville\ }}
\def\Logarithmic{{Liouville\ }}
\begin{document}
%%%%%%%%%%%%%%%%%%%%%%%%%%%%%%%%%%%%%%%%%%%%%%%%%%%%%%%%%%%%%%%%%%%%%%%%%%%%%%%

\title[
The $D$-module of $f^s$
]
{
The Jacobian module, the Milnor fiber, and the $D$-module generated
by $f^s$
}

%%%%%%%%%%%%%%%%%%%%%%%%%%%%%%%%%%%%%%%%%%%%%%%%%%%%%%%%%%%%%%%%%%%%%%%%%%%%%%%

\author{Uli Walther}
\address{
U. Walther\\
Purdue University\\
Dept.\ of Mathematics\\
150 N.\ University St.\\
West Lafayette, IN 47907\\
USA}
\email{walther@math.purdue.edu}
%\phone{(1)765-494-1959}
%\fax{(1)765-494-0548}
%\dedicatory{}

%%%%%%%%%%%%%%%%%%%%%%%%%%%%%%%%%%%%%%%%%%%%%%%%%%%%%%%%%%%%%%%%%%%%%%%%%%%%%%%

\begin{abstract}
For a germ $f$ on a complex manifold $X$, we introduce a complex
derived from the Liouville form acting on logarithmic differential
forms, and give an exactness criterion. We use this \logarithmic
complex to connect properties of the $D$-module generated by $f^s$ to
homological data of the Jacobian ideal; specifically we show that for
a large class of germs the annihilator of $f^s$ is generated by
derivations.  Through local cohomology, we connect the cohomology of
the Milnor fiber to the Jacobian module via logarithmic differentials.

In particular, we consider (not necessarily reduced) hyperplane
arrangements: we prove a conjecture of Terao on the annihilator of
$1/f$; we confirm in many cases a corresponding conjecture on the
annihilator of $f^s$ but we disprove it in general; we show that the
Bernstein--Sato polynomial of an arrangement is not determined by its
intersection lattice; we prove that arrangements for which the
annihilator of $f^s$ is generated by derivations
 fulfill the Strong Monodromy Conjecture, and that this
includes as very special cases all arrangements of Coxeter and of
crystallographic type, and all multi-arrangements in dimension 3.
\end{abstract}

% 13N10 Rings of differential operators and their modules
% 14M25 Toric varieties, Newton polyhedra
% 32S40 Monodromy; relations with differential equations and $D$-modules

\keywords{Bernstein--Sato, b-function, monodromy, hyperplane,
  arrangement, zeta function, logarithmic, differential,
  Milnor fiber, Liouville form, annihilator, local cohomology}

\thanks{Support by the National Science Foundation under grant
  1401392-DMS is gratefully acknowledged. The author would also like
  to thank MSRI for its hospitality when he was in residence during
  the Special Year in Commutative and Non-commutative Algebra in 2012/13.}

\subjclass{14F10, 14N20, 13D45, 32S22, 58A10, 14F40, 14J17, 32C38}

\maketitle
\setcounter{tocdepth}{1}
\tableofcontents
\numberwithin{equation}{section}

%%%%%%%%%%%%%%%%%%%%%%%%%%%%%%%%%%%%%%%%%%%%%%%%%%%%%%%%%%%%%%%%%%%%%%%%%%%%%%%
\section{Introduction}
%%%%%%%%%%%%%%%%%%%%%%%%%%%%%%%%%%%%%%%%%%%%%%%%%%%%%%%%%%%%%%%%%%%%%%%%%%%%%%%

Let $X$ be a smooth analytic space or $\CC$-scheme with structure
sheaf $\calO_X$ and sheaf of $\CC$-linear differential operators
$\calD_X$ on $\calO_X$.  
Throughout,
$f\in\calO_X$ will be a regular analytic non-constant function on
$X$, not necessarily reduced, with divisor $\Div(f)$. 
To any such $f$ one can attach
several invariants that measure the singularity structure of the
hypersurface $\Var(f)$. In this article, we are particularly interested in
the following:
\begin{enumerate}
\item The (parametric) annihilator $\ann_{\calD_X[s]}(f^s)$: if $s$
  is a new variable, one associates to $f$ the multi-valued function
  $f^s$ on the locus $\{\x\in X\mid f(\x)\not =0\}$. The free
  $\calO_X[f^{-1},s]$-module generated by the symbol $f^s$ allows a left
  $\calD_X[s]$-structure via the inherited $\calO_X[s]$-structure and the rule
\begin{eqnarray}\label{eqn-intro-fs}
\delta\bullet
(\frac{g}{f^k}f^s)&=&\delta\bullet(\frac{g}{f^k})f^s+
\frac{sg}{f^{k+1}}\cdot\delta\bullet(f)f^s
\end{eqnarray}
for each $g(x,s)$ in the stalk $\calO_{X,\x}[s]$
and each $\CC$-linear derivation
  $\delta$ on $\calO_{X,\x}[s]$. The left ideals
\begin{eqnarray*}
\ann_{\calD_X[s]}(f^s)&=&\{P\in \calD_X[s]\mid P\bullet f^s=0\},\\
\ann_{\calD_X}(f^s)&=&\calD_X\cap\ann_{\calD_X[s]}(f^s)
\end{eqnarray*}
contain important information on the singularity structure of $f$.
\item The Bernstein--Sato polynomial $b_f(s)$: for $X=\CC^n$, this is
  the (monic) generator of the $\CC[s]$-ideal consisting of all
  polynomials $b_P(s)$ appearing in a functional equation of the form 
\begin{eqnarray}\label{eqn-bfu} 
(P\cdot  f)\bullet f^s&=&b_P(s)\cdot f^s
\end{eqnarray} 
where $P\in\calD_X[s]$. A theorem of Bernstein asserts
  that $b_f(s)$ is nontrivial, so that the root set
\[
\rho_{f}=\{\alpha\in\CC\mid b_f(\alpha)=0\}
\]
is finite, \cite{Bernstein-Nauk72}. 
For $\x\in X$ there are local and analytic versions $b_{f,\x}(s)$
with coefficients in the corresponding local or convergent power
series rings, and $b_f(s)=\lcm_{\x\in X}b_{f,\x}(s)$, compare \cite{NarvaezMebkhout-AnnEcNormSup91}.
\item The Milnor fiber $M_{f,\x}$ at a point $\x\in\Var(f)$: let
  $B(\x,\eps)$ denote the $\eps$-ball around $\x\in\Var(f)$. Milnor proved
  that the diffeomorphism type $M_{f,\x}$ of the open real manifold 
$M_{\x,t,\eps}=B(\x,\eps)\cap \Var(f-t)$
is independent of $\eps,t$ as long as $0<|t|\ll\eps\ll 1$. For
$0<\tau\ll\eps\ll 1$ there is a fiber bundle
\[
M_{f,\x}\into
B(\x,\eps)\cap \{\frakq\in\CC^n\mid
0<|f(\frakq)|<\tau\}\to B(0_{\CC^1},\tau)\smallsetminus \{0_{\CC^1}\}.
\] 
\item Monodromy at $\x\in\Var(f)$: the above fibration induces for all
  $k\in\NN$ smooth
  vector bundles 
  $H^k(M_{\x,t,\eps},\CC)$ over the base of the fibration.
The linear transformation $\mu_{f,\x}$ induced on
  $H^\bullet(M_{f,\x})$ by lifting the path $t\leadsto t\cdot
  \exp(2\pi \sqrt{-1} \lambda)$ for $\lambda\in[0,1]$ is the monodromy
  of $f$ at $\x$. We denote $e_{f,\x}=\{\gamma\in\CC\mid \gamma\text{
    is an eigenvalue of }\mu_{f,\x}\}.$
\end{enumerate}

The following are classical results on these invariants:
\begin{itemize}
%\item $b_f(s)=(\ann_{\cxalD_X}(f^s)+\calD_X[s]\cdot f)\cap \CC[s]$,
\item[(i)] $
\bigcup_{\x\text{ near }\x_0}e_{f,\x}=\exp(2\pi \sqrt{-1} \rho_{f,\x_0})$, \cite{Malgrange-isolee,Kashiwara-vanishing83};
\item[(ii)] 
$\rho_{f,\x}\subseteq\QQ\cap (-n,0)$, \cite{Kashiwara-bfu, Saito-Bull94};
\end{itemize}

\subsubsection*{Logarithmic forms and $\ann_{\calD_X}(f^s)$}
In this note we prove  structural results for
$\ann_{\calD_X}(f^s)$ in the
presence of suitable homogeneities of $f\in\calO_X$. 
If there is a vector field $E$ with $E\bullet(f)=f$ then $f$ is
\emph{Euler-homogeneous}, cf.~Definition~\ref{dfn-Euler} for details
and strengthenings.
Our main tool are the sheaves
$\Omega^\bullet_X(\log f)$ of \emph{logarithmic
differential forms} along $f$. If $\Omega^1_X$ is the sheaf 
of
$\CC$-linear differentials on $X=\CC^n$, and setting
$\Omega^i_X=\bigwedge^i\Omega^1_X$, then  
$\Omega^i(\log f)$ is the (reflexive) sheaf of
differential forms $\omega$, meromorphic along $\Var(f)$, such that
$f\omega$ and $f\de(\omega)$ are holomorphic:
\begin{eqnarray*}
\Omega^i_X(\log f)=\{\omega\in\frac{1}{f}\Omega^i_X\mid
\de f\wedge\omega\in \Omega^{i+1}_X\}
=\{\omega\in\frac{1}{f}\Omega^i_X\mid 
   \de(\omega)\in\frac{1}{f}\Omega^{i+1}_X\}.
\end{eqnarray*}
This construction does not depend on the choice of the coordinate
system, and if $f=gu$ for a local unit $u$ then $\Omega^i_X(\log
f)=\Omega^i_X(\log g)$.  Thus, given an effective divisor $Y\subseteq
X$, $X$ not necessarily $\CC^n$, one can define a sheaf
$\Omega^i_X(\log Y)$ locally as $\Omega_U^i(\log f)$ for any local
defining equation $f$ for $Y$ on the open affine set $U$ in $X$. For
varieties over $\CC$, the construction is compatible with the
algebraic-analytic comparison map. %These reflexive modules of rank
%${n \choose i}$ form a complex (of
%Abelian groups) with the
%usual de Rham differential. 

The logarithmic forms of order $n-1$ induce a decomposition of $X$
into disjoint locally closed sets, see Definition
\ref{dfn-log-strat}. If this is a (locally finite) stratification, $f$
is called \emph{Saito-holonomic}, cf.~Definition~\ref{dfn-holonomic}.
In the same article \cite{Saito-logarithmic} where K.~Saito suggested
holonomicity, he also introduced the notion of \emph{freeness},
characterized by $\Omega^1_X(\log f)$ being a locally free
$\calO_X$-module. Free divisors have many nice properties and many
distinguished classes of divisors are free; this includes Coxeter
arrangements, discriminants in certain prehomogeneous vector spaces,
and discriminants in the base of a versal deformation of isolated
(complete intersection) singularities.  Calderon-Moreno and
Narvaez-Macarro \cite{Calderon-AnnSciEN99,Narvaez-Contemp08} studied
certain free divisors from the differential point of view. Part of
this note is a generalization and sharpening of their results to a
significantly larger class of divisors,
%The best results for $\ann(f^s)$ are implied
%by good homological properties of $\Omega^{1}(\log f)$, 
the central property being tameness, compare Definition
\ref{dfn-tame}. Tame divisors include all free divisors, and
%indicated by freeness of
%$\Omega^1(\log f)$ and introduced by
%K.~Saito. 
all divisors in dimension three or less.
% all in dimension two or less are trivially free.

Since exterior products of logarithmic
differential forms are logarithmic again, $\Omega^\bullet_X(\log f)$ is a
complex of $\calO_X$-modules with differential $\dff\wedge$, and one 
%Given a choice of a local defining equation $f\in\calO_U$ we 
can define $\calO_X$-submodules
% of $\Omega^\bullet_U(\log f)=\Omega^\bullet_U(\log f)$ by
%setting
\begin{eqnarray}\label{Omega0}
\Omega^i_{X}(\log_0 f)&:=&\ker\left(\de f\wedge(-)\colon \Omega^i_X(\log f)\to
\Omega^{i+1}_X(\log f)\right)
\end{eqnarray}
which {\em do depend} on the specific choice of the defining equation
$f$ for $\Div(f)$.  

%An Euler-homogeneity on $U\subseteq X$ yields
%splittings $\Omega_U^i(\log f)=\Omega^i_{U,0}(\log f)\oplus
%\Omega^{i+1}_{U,0}(\log f)$ over $U$.

%We distinguish several homogeneity conditions on $f$, see
%Definition \ref{dfn-homog}.

%We can now state our main result on $\ann(f^s)$; note that reducedness
%or global homogeneity is not among the hypotheses.
%\begin{cor}
%Suppose $f\in \calO_X$ is tame and locally strongly Euler-homogeneous.
%Then $\ann_{D[s]}(f^s)$ is generated locally by derivations.  In case
%$X$ is additionally affine, the derivations can be chosen to be
%global.
%\end{cor}

\subsubsection*{\Logarithmic complexes}
Let $\pi\colon T^*X\to X$ be the canonical projection from the total
space of the cotangent bundle on $X$. The wedge product with the
Liouville form $\ydx$ (see Subsection \ref{subsec-complex} below)
defines a $\pi_*\pi^*(\calO_X)$-morphism 
\[
\ydx \colon \pi_*\pi^*(\Omega^{i-1}_{X}(\log_0 f))\to
\pi_*\pi^*(\Omega^i_{X}(\log_0 f))
\]
for all $i$.  In this note we construct from this morphism the
\emph{\logarithmic complex} $\C_f^\bullet$ of $f$, a global version of
a certain approximation complex from \cite{HerzogSimisVasconcelos}; it
is the main object of study in Section \ref{sec-complex}.  The
terminal cohomology group of the complex $\C_f^\bullet$ of (reflexive)
$\pi_*\pi^*(\calO_X)$-modules is naturally identified with the
quotient of $\pi_*\pi^*(\calO_X)$ by the \emph{\logarithmic ideal} of
$f$, denoted $\calL_f$.
%Let $X=\CC^n$ and let $y=y_1,\ldots,y_n$ be coordinates on the the
%total space of the cotangent bundle $T^*X=\Spec(\calO_X[y])$.  We
%write $M[y]$ for $\pi_*\pi^*(M)=M\otimes_{\calO_X} \calO_X[y]$ of the
%$\calO_X$-module $M$ under the natural projection $\pi\colon T^*X\to
%X$.
%Suppose $X=\CC^n$, $\calO_X=R$, pick $f\in R$ and 
%\begin{dfn}\label{dfn-Lf}
%The
%\emph{Euler form} $\ydx=\sum_{i=1}^n y_i\de x_i$ induces an $\calO_X[y]$-linear
%differential on $\Omega^\bullet(\log f)[y]$, and the {\em \logarithmic ideal}
%$\calL_f$ of $f$ is the ideal in $\calO_X[y]$ defined as the image of the
%composition
%\[
%\ydx \colon \Omega^{n-1}(\log f)[y]\to \Omega^n(\log f)[y]=
% \frac{1}{f}\calO_X[y]\cong \calO_X[y].
%\]
%\end{dfn}\uli{how independent is this of coord change?}
If $Y=\Div(f)$ is a strongly Euler-homogeneous (effective) divisor
(Def.~\ref{dfn-Euler}) then the \logarithmic ideal $\calL_f$ is
independent of the choice of $f$ and only depends on $Y$, cf.~Remark
\ref{rmk-Ehom}.

Our main result on the \logarithmic complex is the following theorem;
see Section \ref{sec-complex} for details.
\begin{thm}
If $f\in \calO_X$ is tame, Saito-holonomic and strongly
Euler-homogeneous (but not necessarily reduced) then the \logarithmic complex
%$(\pi_*\pi^*(\Omega^\bullet_{X}(\log_0 f)),\ydx)$ 
is a  resolution (of reflexive modules) for
the \logarithmic ideal $\calL_f$, and $\calL_f$ is a
Cohen--Macaulay prime ideal of dimension $n+1$.
\end{thm}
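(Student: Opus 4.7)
The plan is to establish acyclicity of $\C_f^\bullet$ via a sliding-depth criterion for approximation complexes in the sense of Herzog--Simis--Vasconcelos, and then derive the Cohen--Macaulay, prime, and dimension statements by combining this acyclicity with a geometric identification of $V(\calL_f) \subset T^*X$. Working locally on $X$, I would first identify $\calL_{f,\x}$ with the ideal in $\calO_{X,\x}[y_1,\ldots,y_n]$ generated by the principal symbols $\sigma(\delta) = \sum a_i y_i$ of derivations $\delta = \sum a_i \partial_{x_i}$ that annihilate $f$; strong Euler-homogeneity provides the splitting $\Der(-\log f) = \{\delta : \delta(f) = 0\} \oplus \calO_X \cdot E$, which both legitimizes this identification and ensures that $\C_f^\bullet$ matches the approximation complex built on these generators, with the modules $\Omega^i_X(\log_0 f)$ playing the role of Koszul cycles.

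The main analytic input is a sliding-depth estimate ensuring acyclicity of this approximation complex. Tameness ($\pdim \Omega^i_X(\log f) \le i$) supplies the global projective-dimension bound on the relevant cycles, and Saito-holonomicity ensures that the loci where the fiber ranks of $\Omega^\bullet_X(\log f)$ drop form a locally finite stratification with strata of the codimension required by a Buchsbaum--Eisenbud-type acyclicity criterion. A stratum-by-stratum induction -- base case at smooth points (where after choosing $x_1 = f$ the generators become the regular sequence $y_2,\ldots,y_n$), inductive step using the Euler field to produce a regular element modulo the cycles -- yields the required sliding-depth estimate. Reflexivity of each term of $\C_f^\bullet$ is inherited from Saito's classical reflexivity of $\Omega^i_X(\log f)$ under tameness, via the kernel construction defining $\Omega^i_X(\log_0 f)$ and the flatness of $\pi$.

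For the remaining assertions I would compute $V(\calL_f) \subset T^*X$ geometrically: at any point where $df \neq 0$, the ideal $\calL_{f,\x}$ is generated by the $n-1$ linear forms $y_2,\ldots,y_n$ in the cotangent coordinates (after choosing $x_1 = f$), so $V(\calL_f)$ is generically a line bundle over $X$ along the conormal direction to the levels of $f$, irreducible of dimension $n+1$. Once the resolution $\C_f^\bullet$ of length $n-1$ is established, the Auslander--Buchsbaum formula yields $\depth(\calO_{T^*X}/\calL_f) = 2n - (n-1) = n+1 = \dim(\calO_{T^*X}/\calL_f)$, giving Cohen--Macaulayness. Cohen--Macaulayness then implies unmixedness, and combined with the irreducibility of the underlying variety and the evident generic reducedness, $\calL_f$ emerges as a prime ideal.

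The principal obstacle is establishing the sliding-depth condition. Tameness controls projective dimension of the logarithmic modules as $\calO_X$-modules, whereas the acyclicity criterion demands depth estimates in $\calO_{T^*X}$ for the associated symbolic generators; transferring between these two settings requires a careful stratified induction along the Saito stratification, and it is precisely here that strong Euler-homogeneity becomes indispensable, as it ensures at each inductive step that the Euler field supplies the regular element modulo the cycle module needed to run the induction.
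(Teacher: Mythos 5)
Your overall architecture is close to the paper's: use the approximation-complex structure of $\C_f^\bullet$, reduce locally via the logarithmic stratification, use tameness for projective-dimension bounds, then deduce Cohen--Macaulayness and primeness. However, several of your steps do not actually close, and where they don't is precisely where the paper's argument does its real work.

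First, on acyclicity: you invoke a sliding-depth/Buchsbaum--Eisenbud-type criterion and then acknowledge that ``transferring between these two settings requires a careful stratified induction along the Saito stratification'' --- but you never carry out that transfer, and it is the crux. The paper does not use sliding depth; it proves acyclicity by induction on $n$, using the local product decomposition $(X,Y)\cong \CC^{\dim\sigma}\times(X',Y')$ available at points of positive-dimensional logarithmic strata (this is where Saito-holonomicity and strong Euler-homogeneity combine, via Remarks~\ref{rmk-Whitney-diffeo-product} and \ref{rmk-Ehom}). This decomposition shows $\C^\bullet_f$ is locally a Koszul cocomplex on extra variables tensored with $\C^\bullet_{f'}$, so the inductive hypothesis forces $H^{<n}(\C^\bullet_f)$ to be supported at the $0$-dimensional strata. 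Only then does tameness enter, via the Euler splitting $\Omega^i_X(\log f)\cong\Omega^i_X(\log_0 f)\oplus\Omega^{i+1}_X(\log_0 f)$, to give $\pdim_R (C^{n-i}_{f,\x})_j\le i$, and the Peskine--Szpiro Acyclicity Lemma (which wants $\pdim$ bounds plus depth-$0$ cohomology) finishes. Your description of Saito-holonomicity as controlling ``loci where the fiber ranks of $\Omega^\bullet_X(\log f)$ drop'' with codimensions matching Buchsbaum--Eisenbud is plausible-sounding but not what actually happens, and you do not verify the depth-of-determinantal-ideal conditions those criteria require. Also, as a minor point, reflexivity of $\Omega^i_X(\log f)$ does not need tameness --- it is a second-syzygy property, which holds always.

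Second, on Cohen--Macaulayness: ``Once the resolution $\C_f^\bullet$ of length $n-1$ is established, the Auslander--Buchsbaum formula yields $\depth = 2n-(n-1) = n+1$'' treats $\C_f^\bullet$ as if it were a \emph{free} resolution. It is not; its terms are non-free reflexive modules. The correct step, which the paper carries out, is to resolve each $C^i_{f,\x}$ by a minimal graded free resolution $F^i_\bullet$, lift the differentials to a double complex, and observe that the total complex is a minimal free resolution of $\calO_{X,\x}[y]/\calL_f$ whose length is bounded by the sum of the homological position $n-i$ and $\pdim C^i_{f}\le i-1$, hence $\le n-1$. Only after this does the depth computation go through.

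Third, on primeness: you assert that $V(\calL_f)$ is ``irreducible of dimension $n+1$'' because it is ``generically a line bundle over $X$.'' That only shows the \emph{generic} component is irreducible; it does not rule out additional components over the singular locus of $f_{\red}$, which is exactly the danger. The paper's primeness proof is an induction on the logarithmic strata: Cohen--Macaulayness forces unmixedness (all components dimension $n+1$), and at a $0$-dimensional stratum any extra component would have to lie in the fiber over that point and hence have dimension $\le n$, a contradiction. Your phrase ``combined with the irreducibility of the underlying variety and the evident generic reducedness'' sweeps the whole induction under the rug; the irreducibility is precisely what has to be established, and it requires Saito-holonomicity.
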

%The Rees algebra $\calR$ of an ideal is a much-studied and important
%invariant in commutative algebra. \uli{related to Nash blow-up?}
One consequence of the above theorem is that under the stated
hypotheses the symmetric algebra of the Jacobian ideal agrees with the
Rees algebra of $f$, and both have a linear resolution, Corollary
\ref{cor-blowup}. In a different direction one obtains information on
differential invariants of $f$ as the \logarithmic ideal supplies a
link between $\Omega^\bullet_X(\log f)$ and $\ann_{\calD_X}(f^s)$:
\begin{thm}
Suppose $f\in \calO_{X,\x}$ is tame, Saito-holonomic, strongly
Euler-homo\-geneous with strong
Euler field $E_\x$ at $\x\in X$,  but is not necessarily reduced.  Then
$\ann_{\calD_{X,\x}}(f^s)$ 
is generated by 
the $\CC$-linear derivations $\delta\colon \calO_{X,\x}\to
\calO_{X,\x}$ for which $\delta\bullet(f)=0$.
In consequence,
$\ann_{\calD_{X,\x}[s]}(f^s)=\calD_{X,\x}[s]\cdot(\ann_{\calD_{X,\x}}(f^s),E_\x-s)$
is generated by order one operators. 

\end{thm}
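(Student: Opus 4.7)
The plan is to prove first the sharper $\calD[s]$-statement
\[
\ann_{\calD_{X,\x}[s]}(f^s) \;=\; \calD_{X,\x}[s]\cdot (\calK, E_\x-s),
\]
where $\calK := \{\delta\in\Der_\CC(\calO_{X,\x}) : \delta(f)=0\}$; the statement on $\ann_{\calD_{X,\x}}(f^s)$ then follows by intersecting with $\calD_{X,\x}\subset\calD_{X,\x}[s]$ and using the isomorphism of left $\calD_{X,\x}$-modules $\calD_{X,\x}[s]/\calD_{X,\x}[s](s-E_\x)\xrightarrow{\sim}\calD_{X,\x}$, $s\mapsto E_\x$. The inclusion ``$\supseteq$'' is immediate from (\ref{eqn-intro-fs}): for $\delta\in\calK$ one has $\delta\bullet f^s = s(\delta(f)/f)f^s = 0$, and $(E_\x - s)\bullet f^s = 0$ because $E_\x(f)=f$.

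For the reverse inclusion I would pass to associated gradeds. Filter $\calD_{X,\x}[s]$ by total order with $s$ in filtration degree one; then $\gr\calD_{X,\x}[s] = R := \calO_{X,\x}[\xi_1,\dots,\xi_n,s]$. The symbol of $\delta=\sum_i\delta_i\partial_i\in\calK$ is the linear form $\sigma(\delta)=\sum_i\delta_i\xi_i$, and the symbol of $E_\x-s$ is $\sigma(E_\x)-s$. Trivializing $\Omega^n_X(\log_0 f) = \frac{1}{f}\Omega^n_X \cong \calO_{X,\x}$ and identifying $\Omega^{n-1}_X(\log_0 f)$ with $\calK$ via interior multiplication, the terminal differential $\ydx\wedge(-)$ of the \logarithmic complex $\C_f^\bullet$ becomes the map $\delta\otimes p \mapsto p\,\sigma(\delta)$, yielding the identification
\[
\calL_f \;=\; \bigl(\sigma(\delta) : \delta\in\calK\bigr)\;\subseteq\; \calO_{X,\x}[\xi].
\]
By the previous theorem $\calL_f$ is Cohen--Macaulay \emph{prime} of Krull dimension $n+1$, so $R/\calL_f R = (\calO_{X,\x}[\xi]/\calL_f)[s]$ is an integral domain and $\sigma(E_\x)-s$, being monic of degree one in $s$, is a non-zerodivisor on it. Hence
\[
R\bigl/\bigl(\calL_f R + (\sigma(E_\x)-s)\bigr) \;\cong\; \calO_{X,\x}[\xi]/\calL_f
\]
is an integral domain of Krull dimension $n+1$.

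The canonical graded surjection $R/\gr\bigl(\calD_{X,\x}[s]\cdot(\calK,E_\x-s)\bigr) \twoheadrightarrow \gr\bigl(\calD_{X,\x}[s]\cdot f^s\bigr)$ factors through this domain. The target has Krull dimension at least $n+1$: for generic $s_0\in\CC$ the specialization of $\calD_{X,\x}[s]\cdot f^s$ at $s=s_0$ is the nonzero holonomic $\calD_{X,\x}$-module $\calO_{X,\x}[f^{-1}]$, whose characteristic variety has dimension $n$ in $T^*X$. Since any nonzero proper quotient of an integral domain has strictly smaller Krull dimension, the chain of surjections collapses to isomorphisms, giving $\gr\bigl(\calD_{X,\x}[s]\cdot(\calK,E_\x-s)\bigr) = \gr\ann_{\calD_{X,\x}[s]}(f^s)$ and hence equality of the annihilators themselves. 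The main difficulty is the identification $\calL_f = \bigl(\sigma(\delta) : \delta\in\calK\bigr)$, which requires a careful Koszul-type unwinding of the terminal differential of $\C_f^\bullet$; the subsequent dimension bookkeeping is a clean consequence of the Cohen--Macaulay primality of $\calL_f$ supplied by the previous theorem.
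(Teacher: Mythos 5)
Your approach is essentially the same as the paper's in broad outline: pass to associated gradeds, invoke the previous theorem that $\calL_f$ is prime and Cohen--Macaulay of dimension $n+1$, and conclude by a dimension count. The paper proves the $\calD_{X,\x}$-statement first (so no $s$ ever enters the grading; $\gr_{(0,1)}$ is taken over $\calO_{X,\x}[y]$ alone) and then deduces the $\calD_{X,\x}[s]$-statement via the Euler field. You instead prove the $\calD_{X,\x}[s]$-statement directly, filtering $\calD_{X,\x}[s]$ with $s$ in degree one. That choice is where the trouble lies.

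The genuine gap is the lower bound $\dim \gr(\calD_{X,\x}[s]\cdot f^s) \geq n+1$. With $s$ placed in filtration degree one, the element $s-s_0$ is \emph{not} homogeneous in $R=\calO_{X,\x}[\xi,s]$, so the geometric specialization at $s=s_0$ does not commute with $\gr$: one only gets a surjection of $\calO_{X,\x}[\xi]$-modules $\gr M \onto \gr(M/(s-s_0)M)$, which yields $\dim\gr M\geq n$, not $n+1$. The $s$-degree is lost precisely because $s-s_0$ is inhomogeneous. The bound is \emph{true}, but your justification does not prove it. Two ways out: (i) put $s$ in degree zero, where $s-s_0$ is homogeneous of degree zero and the fiber-dimension argument over a Zariski-open set of $s_0\in\CC$ gives $\dim\gr M\geq n+1$ cleanly; or (ii) invoke Gabber's involutivity theorem, noting that the symbol of $s$ is a Poisson Casimir in $\gr\calD_{X,\x}[s]$, so that the characteristic variety is coisotropic of codimension at most $n$ in $\Spec R$, hence of dimension $\geq n+1$.

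For comparison, the paper's Lemma~\ref{lem-know-annfs} avoids the issue entirely. It works with $\gr_{(0,1)}(\ann_{\calD_{X,\x}}(f^s))$ over $\calO_{X,\x}[y]$: this ideal contains $\calL_f$, a prime of height $n-1$, so if the containment were strict its height would be $\geq n$. But over the smooth locus of $f_\red$ near $\x$ (which is dense, hence meets every neighborhood) the characteristic variety of $\calD_X\cdot f^s$ is a complete intersection of codimension $n-1$ cut out exactly by $\calL_f$; the closure of this $n+1$-dimensional piece passes through the fiber over $\x$, forcing the height at $\x$ to be $\leq n-1$ and hence the equality $\gr_{(0,1)}(\ann)=\calL_f$. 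Adding $E_\x-s$ is then a separate, elementary step. Your identification $\calL_f=(\sigma(\delta):\delta\in\calK)$ requires no ``careful Koszul unwinding'' — it is literally Definition~\ref{dfn-Lf}.
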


\subsubsection*{Hyperplane arrangements}
%%%%%%%%%%%%%%%%%%%%%%%%%%%%

Let $X=\CC^n$ and denote by $D_n$ the ring of $\CC$-linear algebraic
differential operators on $X$ (\emph{i.e.}, the $n$-th complex Weyl
algebra).  A very interesting class of divisors are hyperplane
arrangements $\calA$, defined by a product $f_\calA$ of linear
polynomials. In generalization to the results in \cite{Terao-JAlg02}
which discuss differential operators with constant coefficients, Terao
conjectured around 2002 that $\ann_{D_n}(1/f_\calA)$ be generated by
differential operators of order one whenever $f_\calA$ is
reduced. Corresponding speculations have been made about
$\ann_{D_n}(f_\calA^s)$ in
\cite{Torrelli-BullMathSoc04,W-Bernstein}. We use our techniques to
prove that for tame arrangements $\ann_{D_n}(f_\calA^s)$ is indeed
generated by derivations. In consequence, Terao's
conjecture must hold in the tame case, but using an approach via local
cohomology we actually prove it in Theorem \ref{thm-terao} for all
arrangements, irrespective of tameness or multiplicities. On the other
hand, we provide in Example~\ref{exa-bracelet} an arrangement for
which $\ann_{D_n}(f_\calA^s)$ is \emph{not} generated by derivations
(while, of course, $\ann_{D_n}(1/f_\calA)$ still \emph{is} generated
by operators of order one).

\smallskip

Aside from the fact that $-1$ is always a root of the Bernstein--Sato
polynomial $b_f(s)$ of a non-constant polynomial $f$, very little is
known about specific roots. Budur, \mustata\ and Teitler 
formulated the following idea:
\begin{cnj}[The $n/d$-conjecture, 
\cite{BudurMustataTeitler-GeomDed11}]\label{cnj-n/d}
If $f_\calA$ defines a central reduced indecomposable arrangement of
$d$ hyperplanes in $\CC^n$ then $-n/d$ is a root of the
Bernstein--Sato polynomial of $f_\calA$.
\end{cnj}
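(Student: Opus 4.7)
Since the $n/d$-conjecture is open, the proposal is necessarily speculative; I sketch an approach built on this paper's machinery, first specialising to the tame case. The overall plan is to combine the explicit structure of $\ann_{D_n}(f_\calA^s)$ under tameness with a monodromy input to force $-n/d$ into $\rho_{f_\calA}$, and then to indicate what must change to drop tameness.

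Centrality supplies the Euler field $E=\sum x_i\del_{x_i}$ with $E\bullet f_\calA=d\cdot f_\calA$, hence $E-ds\in\ann_{D_n[s]}(f_\calA^s)$; under tameness the paper's main theorem gives
\[
\ann_{D_n[s]}(f_\calA^s)=D_n[s]\cdot(\Theta_{f_\calA},E-ds),\qquad \Theta_{f_\calA}=\{\delta\mid\delta\bullet f_\calA=0\},
\]
and the Cohen--Macaulay resolution of the \logarithmic ideal $\calL_{f_\calA}$ controls the quotient $N:=D_n[s]\cdot f_\calA^s/D_n[s]\cdot f_\calA^{s+1}$, whose $\CC[s]$-annihilator is $b_{f_\calA}(s)$. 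The elementary identity $\sum_i\del_{x_i}x_i\bullet f_\calA^s=(E+n)\bullet f_\calA^s=(ds+n)f_\calA^s$ locates the factor $(ds+n)$ at the top of the \logarithmic complex; paired with the Euler grading of the homogeneous setting, I would lift this to a functional equation $P(s)\bullet f_\calA^{s+1}=(ds+n)\cdot q(s)\cdot f_\calA^s$ for some $P(s)\in D_n[s]$ and $q(s)\in\CC[s]$.

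The essential snag is that such a lift only yields $b_{f_\calA}(s)\mid (ds+n)q(s)$, not $(ds+n)\mid b_{f_\calA}(s)$. To obtain the latter I would invoke the Malgrange--Kashiwara bridge~(i), arguing that $\exp(-2\pi\sqrt{-1}\,n/d)$ is a monodromy eigenvalue of $f_\calA$ at the origin; for a central homogeneous polynomial of degree $d$ this amounts to exhibiting the character in the Milnor fibre cohomology, a step where indecomposability must be used quantitatively, since a decomposition $f_\calA=f_{\calA'}f_{\calA''}$ would replace $(ds+n)$ by the pair $(d_1s+n_1),(d_2s+n_2)$ and generically shift the distinguished root. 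Combined with the bound $\rho_{f_\calA}\subseteq(-n,0)\cap\QQ$ of~(ii), the root produced then lives in $(-n/d+\ZZ)\cap(-n,0)$, and the detailed root-counting furnished by the resolution of $\calL_{f_\calA}$ together with the Euler grading ought to pin it at $-n/d$ itself. The main obstacle, in my view, is precisely this topological step: establishing that the character $\exp(-2\pi\sqrt{-1}\,n/d)$ truly appears for every indecomposable $\calA$. To remove tameness one would replace the \logarithmic complex by a local-cohomology complex computing $H^\bullet_{\Var(f_\calA)}(\calO_X)$ in the spirit of the paper's proof of Terao's conjecture on $\ann_{D_n}(1/f_\calA)$, but propagating the weight-tracking through that detour seems to me at least as hard as the conjecture itself.
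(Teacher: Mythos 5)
Your proposal correctly collects the ingredients (the Euler relation $E-ds$, the generation of $\ann_{D_n[s]}(f_\calA^s)$ by order-one operators under tameness, the relevance of indecomposability), but the two steps you flag as gaps are in fact fatal to the route you sketch: a functional equation $P(s)\bullet f_\calA^{s+1}=(ds+n)q(s)f_\calA^s$ only gives $b_{f_\calA}(s)\mid (ds+n)q(s)$, the wrong divisibility, and the monodromy fallback is essentially a parallel open problem (it is the topological statement that the character $\exp(-2\pi\sqrt{-1}\,n/d)$ appears in $H^\bullet(M_{f_\calA})$ for every indecomposable $\calA$). Note also that the paper does not prove the conjecture in general; it proves it under $(A_s)$ (Theorems~\ref{thm-n/d} and~\ref{thm-n/d-arr}), a hypothesis that fails in general by Example~\ref{exa-bracelet}.

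The missing idea that closes the tame / $(A_s)$ case is purely algebraic and needs neither a functional equation nor monodromy. First one shows (first part of Theorem~\ref{thm-n/d}) that for a central indecomposable arrangement every $\delta\in\Der_X(-\log_0 f_\calA)$ lies in $D_n\cdot(x_1,\ldots,x_n)$: positive-degree homogeneous derivations lie there trivially; a constant-coefficient $\delta$ would make $\calA$ inessential and hence decomposable; and a degree-zero $\delta$ can be diagonalized to $\sum_i w_i x_i\del_i$, which, if not a scalar multiple of $E$, is a non-standard quasi-homogeneity whose eigen-blocks split $\calA$. Granting $(A_s)$, this yields
\[
D_n[s]\cdot\bigl(f_\calA,\ann_{D_n[s]}(f_\calA^s)\bigr)\;\subseteq\;D_n[s]\cdot(x,\,E-ds),
\]
so the module $D_n[s]/D_n[s]\cdot(f_\calA,\ann_{D_n[s]}(f_\calA^s))$, whose $\CC[s]$-annihilator is $\langle b_{f_\calA}(s)\rangle$, surjects onto the \emph{nonzero} quotient $D_n[s]/D_n[s]\cdot(x,E-ds)\cong D_n/D_n\cdot x$. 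On this target $s$ acts as the scalar $-n/d$, because $E+n\in D_n\cdot x$, and therefore $(ds+n)$ divides $b_{f_\calA}(s)$. The shift in viewpoint is exactly what your proposal lacks: instead of an ``upper-bound'' functional equation, one exhibits a nonzero $D_n[s]$-quotient on which $s$ acts as the target scalar, which forces the factor into the Bernstein--Sato polynomial from below.
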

For any central indecomposable arrangement $\calA$ (tame or otherwise)
we prove in Theorem \ref{thm-n/d} that all derivations that kill
$f_\calA$ (or $1/f_\calA$, or $f_\calA^s$) lie in the ideal
$D_n\cdot(x_1,\ldots,x_n)$.  This in turn has the following
consequence.
\begin{thm}\label{thm-n/d-arr}
Let $f_\calA$ be a central, indecomposable, not necessarily reduced,
arrangement of degree $d$ in $n$ variables for which
$\ann_{D_n}(f_\calA^s)$ is generated by derivations; for example,
$\calA$ could be tame. Then, the $n/d$-conjecture holds for $f_\calA$.
\end{thm}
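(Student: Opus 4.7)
The plan is to substitute $s = -n/d$ into the Bernstein--Sato functional equation and reduce modulo the left ideal $D_n \cdot (x_1, \ldots, x_n)$ of the Weyl algebra. Since $f_\calA$ is homogeneous of degree $d$, the Euler vector field $E := \sum_i x_i \del_i$ satisfies $E \bullet f_\calA = d f_\calA$, so $E/d$ is a strong Euler field. A short telescoping argument (using that $s$ is central in $D_n[s]$ and that $(s - E/d) \bullet f_\calA^s = 0$) first yields
\[
\ann_{D_n[s]}(f_\calA^s) \subseteq D_n[s] \cdot \ann_{D_n}(f_\calA^s) + D_n[s] \cdot (E - ds).
\]
By hypothesis, $\ann_{D_n}(f_\calA^s)$ is generated by derivations $\delta_1, \ldots, \delta_k$, and by Theorem~\ref{thm-n/d} (where indecomposability of $\calA$ enters) each $\delta_j$ lies in $D_n \cdot (x_1, \ldots, x_n)$.

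Next, I will write the Bernstein--Sato functional equation as $P(s) \cdot f_\calA - b_{f_\calA}(s) \in \ann_{D_n[s]}(f_\calA^s)$ and evaluate the resulting identity in $D_n[s]$ at $s = -n/d$. The key identity
\[
E + n \;=\; \sum_i x_i \del_i + n \;=\; \sum_i \del_i x_i
\]
shows that the specialized Euler generator $E + n$ itself lies in $D_n \cdot (x_1, \ldots, x_n)$. Combined with $\delta_j \in D_n \cdot (x_1, \ldots, x_n)$ from Theorem~\ref{thm-n/d} and $f_\calA \in (x_1, \ldots, x_n)$ (since $f_\calA$ is homogeneous of positive degree), this forces the constant $b_{f_\calA}(-n/d)$ into $D_n \cdot (x_1, \ldots, x_n)$. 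But the PBW decomposition identifies $D_n / D_n \cdot (x_1, \ldots, x_n)$ with $\CC[\del_1, \ldots, \del_n]$, in which constants inject, so $b_{f_\calA}(-n/d) = 0$.

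The main conceptual point is the coincidence of two mechanisms of vanishing at $s = -n/d$: Theorem~\ref{thm-n/d} pushes the derivation generators into $D_n \cdot (x_1, \ldots, x_n)$, while the algebraic identity $E + n = \sum_i \del_i x_i$ sends the specialized Euler generator into the same ideal. That these match at precisely the numerical value $-n/d$ is what drives the proof. Any loss of indecomposability would break the first mechanism (for instance, $\del_n$ kills any arrangement constant in $x_n$ but does not lie in $D_n \cdot (x_1, \ldots, x_n)$), consistent with the fact that the $n/d$-conjecture itself demands indecomposability.
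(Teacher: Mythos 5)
Your proposal is correct and tracks the paper's proof closely: both rely on the two key facts that (by the first part of Theorem~\ref{thm-n/d}) every derivation annihilating $f_\calA^s$ lies in $D_n\cdot x$, and that $E+n=\sum_i\del_i x_i\in D_n\cdot x$, which together with $f_\calA\in(x)$ force $b_{f_\calA}(-n/d)$ into $D_n\cdot x$, whence it is zero. The only difference is presentational — the paper phrases the conclusion by observing that $s$ has minimal polynomial $s+n/d$ on the nonzero quotient module $D_n[s]/D_n[s]\cdot(x,E-ds)$ onto which $D_n[s]/D_n[s]\cdot(f_\calA,E-ds,\ann_{D_n}(f^s_\calA))$ surjects, while you substitute $s=-n/d$ into the functional equation directly; these are the same computation.
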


\medskip
The interest in the $n/d$-Conjecture comes from the Strong
(topological) Monodromy Conjecture. To state that, denote by $Z_f(s)$
the topological zeta function $Z_f(s)$ attached to a divisor
$\Div(f)$. This is the rational function
\begin{eqnarray}\label{eqn-zeta-fcn}
Z_f(s)&=&\sum_{I\subseteq S}\chi(E^*_I)\prod_{i\in
  I}\frac{1}{N_is+\nu_i}
\end{eqnarray}
where $\pi\colon (Y,\bigcup_S E_i)\to (\CC^n,\Var(f))$ is an embedded
resolution of singularities, and $N_i$ (resp.\ $\nu_i-1$) are the
multiplicities of $E_i$ in $\pi^*(f)$ (resp.\ in the Jacobian of
$\pi$). By results of Denef and Loeser in \cite{DenefLoeser-JAG98}, $Z_f(s)$
is independent of the resolution. The Strong (topological) Monodromy
Conjecture claims that any pole of $Z_f(s)$ is a root of the
Bernstein--Sato polynomial $b_f(s)$ (see \cite{Budur-survey} for
more information). The
Strong Monodromy Conjecture is a variant of an old (and still wide
open) conjecture of Igusa linking $p$-adic integrals to the root set
$\rho_f$, cf.\ \cite{Denef-Bourbaki}.

Theorem \ref{thm-n/d-arr} then implies, by ideas of Budur,
\mustata\  and Teitler from \cite{BudurMustataTeitler-GeomDed11}:
\begin{cor}
The Strong Monodromy Conjecture holds for all tame arrangements
(irrespective of centrality, indecomposability or reducedness). 
\end{cor}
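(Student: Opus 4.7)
The plan is to combine Theorem \ref{thm-n/d-arr} with the reduction machinery of Budur, \mustata\ and Teitler from \cite{BudurMustataTeitler-GeomDed11}. Their main contribution, for our purposes, is a combinatorial analysis of the topological zeta function: every pole of $Z_{f_\calA}(s)$ is either $-1$ (which is always a root of $b_{f_\calA}(s)$) or of the form $-n_W/d_W$, where $W$ is a \emph{dense edge} of $\calA$ and $(n_W,d_W)$ are the dimension and cardinality of the localized arrangement $\calA_W$; density of $W$ is exactly the statement that $\calA_W$ (viewed as an essential arrangement in $\CC^{n_W}$ after quotienting) is central and indecomposable. Thus the Strong Monodromy Conjecture for $\calA$ is reduced to verifying the $n/d$-conjecture for each such $\calA_W$.

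First I would dispose of the reductions hidden in the phrase ``irrespective of centrality, indecomposability or reducedness''. For non-central $\calA$, the factor of a trivial affine arrangement contributes only $-1$ as a zeta-function pole and leaves the non-trivial part unchanged, so we may assume $\calA$ is central. For reducedness, note that Theorem \ref{thm-n/d-arr} is already stated for possibly non-reduced arrangements; alternatively, $Z_{f_\calA^k}(s)$ and $Z_{f_\calA}(s)$ have the same pole set up to rescaling by $k$, and the corresponding Bernstein--Sato roots transform compatibly via $b_{f^k}(s)\mid \prod_{i=0}^{k-1}b_f(ks+i)$. Indecomposability is not assumed globally since the BMT reduction already localizes to the dense edges.

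Next, the main substantive step: for each dense edge $W$, one must verify that $\calA_W$ is tame whenever $\calA$ is. This is a local statement about the projective dimension of $\Omega^\bullet_X(\log f_\calA)$ at a generic point of $W$: passing to $\calA_W$ corresponds to restricting along a transverse slice, and the logarithmic forms behave well under such restriction. With tameness of $\calA_W$ in hand, Theorem \ref{thm-n/d-arr} applies and yields that $-n_W/d_W$ is a root of the Bernstein--Sato polynomial $b_{f_{\calA_W}}(s)$ of the localized arrangement. Finally, one transfers this root from $b_{f_{\calA_W}}$ to $b_{f_\calA}$ via the identity $b_{f_\calA}=\lcm_{\x}b_{f_\calA,\x}$ recorded in the introduction (citing \cite{NarvaezMebkhout-AnnEcNormSup91}): by choosing $\x$ to be a generic point of $W$, the local Bernstein--Sato polynomial $b_{f_\calA,\x}$ coincides with $b_{f_{\calA_W}}$ (after a coordinate change separating the $W$-direction from the transverse one), whence $-n_W/d_W$ is a root of $b_{f_\calA}(s)$.

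The main obstacle will be Step 2, namely establishing that tameness is inherited under localization at dense edges. Although tameness is a local homological property of the module $\Omega^1_X(\log f_\calA)$, passing to the slice transverse to $W$ requires a depth/projective-dimension tracking argument in the spirit of the Auslander--Buchsbaum formula, and the fact that $f_\calA$ restricts to (a unit multiple of) $f_{\calA_W}$ on such a slice. The remaining transfer from $b_{f_{\calA_W}}$ to $b_{f_\calA}$ is a well-known application of the local-to-global principle for Bernstein--Sato polynomials and the compatibility of the localized arrangement with the analytic germ of $f_\calA$ at a generic point of $W$.
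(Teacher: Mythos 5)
Your proposal is correct and follows essentially the same route as the paper: reduce via Budur--Musta\c t\u a--Teitler to verifying the $n/d$-conjecture for the localized (full) indecomposable subarrangements at the dense edges, note that tameness is inherited under such localization, and then invoke Theorem~\ref{thm-n/d-arr}. The only cosmetic difference is that you spell out the transfer of the root from $b_{f_{\calA_W}}$ to $b_{f_\calA}$ via the $\lcm$ of local Bernstein--Sato polynomials, whereas the paper folds that step into the BMT reduction itself; the tameness-inheritance point you flag as the main obstacle is indeed handled by the local product decomposition already used elsewhere in the paper and presents no real difficulty.
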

This result extends in a new direction some of the results in
\cite{BudurSaitoYuzvinsky-JLMS11} as every arrangement in dimension
three or less is tame.  It proves the Strong Monodromy Conjecture for
all arrangements of the following types: Coxeter arrangements;
Ziegler's multi-reflection arrangements; arrangements of
crystallographic type; all multi-arrangements in dimension 3.

\medskip

A folklore conjecture states that the Bernstein--Sato polynomial of a
hyperplane arrangement $\calA$ depends only on the intersection
lattice. Inspired by Theorem \ref{thm-H-milnor} discussed below we
construct in Section \ref{sec-Milnor} a pair of arrangements that
defeats this conjecture. However, we offer an improved conjecture
involving a slightly finer combinatorial invariant than the usual
intersection lattice. We describe next our approach towards Theorem
\ref{thm-H-milnor}.

\subsubsection*{Jacobian module and monodromy}

%For $f\in R$ denote $\Jac(f)$ 
Let  $X=\CC^n$ and set $R_n=\CC[x_1,\ldots,x_n]$ with maximal
homogeneous ideal $\frakm$.
%the Jacobian ideal $\Jac(f)$ of a section $f\in\calO_X$
% generated by the derivatives of $f$. This ideal 
%depends in general on the chosen 
%coordinate system, and will change if $f$ is replaced with $uf$
%where $u$ is a unit. 
%In particular, it is not well-defined on a general scheme. 
%However, if $f$ is contained in the Jacobian ideal (that is, when $f$
%is Euler-homogeneous), then $\Jac(f)$
%this ideal does not depend on the choice of representative of $f$ and
%is invariant under coordinate changes and can be defined on any smooth
%$X$. When $f$ is even strongly Euler-homogeneous, $\Jac(f)$ only depends
%on the divisor of $f$.
If $f$ has an isolated singularity at the origin, the Milnor fiber
$M_f$ at the origin is a bouquet of spheres, and in the
Euler-homogeneous case the number of these spheres equals the
$\CC$-dimension of the Jacobian ring $R_n/\Jac(f)$. Malgrange showed
that for Euler-homogeneous isolated singularities $R_n/\Jac(f)$ has a $\QQ[s]$-module
structure where $s$ acts via the Euler-homogeneity,
\cite{Malgrange-isolee}. If $f$ is, in addition,  quasi-homogeneous then
the root set of $b_f(s)/(s+1)$ is in bijection with the degree set of
the nonzero quasi-homogeneous elements in $R_n/\Jac(f)$. For
positive-dimensional singular loci, much of this breaks down, since
$R_n/\Jac(f)$ is not Artinian in that case. Let the \emph{Jacobian
  module} be
\[
H^0_\frakm(R_n/\Jac(f))=\{g+\Jac(f)\mid 
\exists k\in\NN, \forall i\,\, x_i^kg\in
\Jac(f)
\}.
\]
We
give in Section \ref{sec-Milnor} the following generalization of
Malgrange's result 
%and the canonical $(n-1)$-form 
%\[
%\hatdx=\sum_i(-1)^i x_i\de x_1\wedge\cdots\wedge\widehat{\de
%  x_i}\wedge\cdots\wedge \de x_n.
%\]
which has recently been used by A.~Dimca and G.~Sticlaru in
\cite{DimcaSticlaru}.

\begin{thm}\label{thm-H-milnor}
Let $f\in R_n$ be reduced and homogeneous of degree $d$, with $n\geq
2$. Assume that $\Proj(R_n/f)$ has isolated singularities.
%, or that more generally the
%NSNC-locus of $\Proj(R_n[z]/z(f-z^d))$ is of dimension zero. 
Then,
with $1\le k\le d$ and $\lambda=\exp(2\pi \sqrt{-1}k/d)$,
\begin{eqnarray*}
\dim_\CC [H^0_\frakm(R_n/\Jac(f))]_{d-n+k}&\le& 
\dim_\CC \gr^{{\rm Hodge}}_{n-2}(H^{n-1}(M_f,\CC)_\lambda)
\end{eqnarray*}
where the right hand side indicates the $\lambda$-eigenspace of the
associated graded object to the Hodge filtration on $H^{n-1}(M_{f},\CC)$,
and where the left hand side denotes the graded component of the
Jacobian module.
\end{thm}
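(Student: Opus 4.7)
The plan is to identify both sides of the inequality with comparable subspaces of graded pieces of a polar filtration on a twisted de Rham complex on $\PP^{n-1}$, and invoke a Griffiths--Steenbrink-type description of the Hodge filtration on the Milnor fibre cohomology.

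\emph{First,} I would present the Milnor fibre as an unramified $d$-fold cyclic cover $\pi\colon M_f\to U=\PP^{n-1}\minus\Var(f)$, with deck group $\ZZ/d$ acting on $\CC^n$ by $x\mapsto \zeta x$ for $\zeta=\exp(2\pi\sqrt{-1}/d)$; this realises the geometric monodromy, and hence $H^{n-1}(M_f,\CC)_\lambda = H^{n-1}(U,\calL_\chi)$ for the rank-one local system $\calL_\chi$ whose monodromy around $\Var(f)$ equals $\lambda = \exp(2\pi\sqrt{-1}k/d)$. By Deligne's meromorphic extension, the right-hand side is computed by the hypercohomology on $\PP^{n-1}$ of the twisted logarithmic de Rham complex $(\Omega^\bullet_{\PP^{n-1}}(\log\Var(f)),\nabla_\chi)$ with connection $\nabla_\chi = \de + (k/d)\,\de f/f\wedge$, and the pole-order filtration induces a filtration $P^\bullet$ on this hypercohomology.

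\emph{Second,} I would build a Griffiths-type residue map. Let $\Omega_0 = \sum_{i=1}^n(-1)^{i-1}x_i\,\de x_1\wedge\cdots\wedge\widehat{\de x_i}\wedge\cdots\wedge\de x_n$ be the contraction of the standard volume form with the Euler field, so that $\Omega_0$ has homogeneous degree $n$. For $g\in [R_n]_{d-n+k}$, the form $g\,\Omega_0/f$ pulled back to $M_f$ has scaling eigenvalue $\zeta^{\deg g+n}=\zeta^k=\lambda$ under the deck action, so it represents a class in the $\lambda$-eigensector; in the twisted complex such a representative lands in the polar step matching $\gr^{n-2}_F$. Under the isolated-projective-singularities hypothesis, a theorem of Saito (extending Griffiths via his filtered $D$-module formalism) identifies, in the top few graded pieces and for $\lambda\neq 1$, the polar filtration with the Hodge filtration, yielding a well-defined
\[
\varphi_k\colon [R_n]_{d-n+k}\longrightarrow \gr^{n-2}_F H^{n-1}(M_f,\CC)_\lambda.
\]

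\emph{Third,} I would show that $\Jac(f)\subseteq\ker\varphi_k$ and deduce the conclusion. Using the Euler identity $\de f\wedge\Omega_0=d\cdot f\cdot\de x_1\wedge\cdots\wedge\de x_n$ together with the Leibniz rule, the image $\nabla_\chi(h\,\Omega_0/f)$ of a polar-order-one form expresses any $\partial_i f \cdot h'$ (for $h'$ of the correct degree) as exact modulo lower polar order, so the Jacobian ideal in degree $d-n+k$ maps to zero. This induces
\[
\bar\varphi_k\colon [R_n/\Jac(f)]_{d-n+k}\longrightarrow \gr^{n-2}_F H^{n-1}(M_f,\CC)_\lambda,
\]
and I would then restrict $\bar\varphi_k$ to the submodule $[H^0_\frakm(R_n/\Jac(f))]_{d-n+k}$. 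Since elements of $H^0_\frakm$ are $\frakm$-torsion modulo $\Jac(f)$ while the Hodge-graded target is unaffected by multiplication by elements of $\frakm$, no further cancellation occurs, the restriction is injective, and the asserted inequality follows.

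The principal obstacle is in the \emph{second} step: the identification of $\gr^{n-2}_F$ with the appropriate polar-graded piece in the twisted setting for a non-smooth (but projectively isolated) $\Var(f)$. Griffiths' original theorem applies to smooth hypersurfaces, and its extension due to Saito (and its refinement by Dimca--Sticlaru) is the crucial technical input; the $\lambda=1$ sector in particular requires separate care. A secondary difficulty, in the \emph{third} step, is to rule out extra relations arising from the singular points: this is handled by a local analysis showing that vanishing-cycle contributions at each isolated projective singularity contribute only to Hodge pieces strictly below $\gr^{n-2}_F$.
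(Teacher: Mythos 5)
Your route is genuinely different from the paper's: you work with the cyclic cover $M_f\to\PP^{n-1}\smallsetminus\Var(f)$ and a twisted logarithmic de Rham complex in $n-1$ projective variables, whereas the paper compactifies the Milnor fibre by introducing $F=(f-z^d)z\in S_n=\CC[z,x_1,\ldots,x_n]$ and works with the (untwisted) logarithmic complex on $\PP^n$; the paper in fact explicitly attributes a cyclic-cover proof to M.~Saito in a remark following the theorem, so your plan is closer to his than to the paper's. That said, your argument has a genuine gap in the third step, and it is exactly where the paper's strategy is careful. You construct $\bar\varphi_k\colon [R_n/\Jac(f)]_{d-n+k}\to\gr^{n-2}_F H^{n-1}(M_f,\CC)_\lambda$ and then assert that its restriction to $[H^0_\frakm(R_n/\Jac(f))]_{d-n+k}$ is injective ``since the Hodge-graded target is unaffected by multiplication by elements of $\frakm$''. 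This is not an argument: there is no reason an $\frakm$-torsion class $g\bmod\Jac(f)$ cannot be sent to a class $g\,\Omega_0/f$ that is exact modulo lower pole order in the twisted complex, which is precisely what would spoil injectivity. The paper's own remark after the proof confirms that this injectivity fails to hold for free: a natural inclusion $[H^0_\frakm(R_n/\Jac(f))]_{d-n+k}\into\gr^{{\rm Hodge}}_{n-2}(H^{n-1}(M_f,\CC)_\lambda)$ is obtained only under the extra hypothesis that the pushforward comparison map \eqref{eqn-diffs} is an isomorphism.

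The paper avoids the need to produce such an injection directly: Step~1 establishes an honest graded-module equality
$\bigoplus_{i=1}^d[H^0_\frakm(R_n/\Jac(f))]_{d-n+i}=H^1(\PP^n,\Omega^{n-1}_{\PP^n}(\log F))$
by running $\frakn$-local cohomology through the syzygy module on $\Jac(F)$ and applying Serre--Grothendieck duality, and then Steps~2--3 bound the dimension by a chain of maps — an injection from the Leray spectral sequence, a surjection coming from the finiteness of the non-SNC locus, and an injection from Deligne's Hodge theory — none of which need be individually invertible. So the dimension inequality drops out even though no single injective map from $H^0_\frakm$ to $\gr^{n-2}_F$ is exhibited. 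If you want to salvage your approach you would need to replace the ``no further cancellation'' heuristic by an actual pole-order/Hodge comparison at each graded step (as in the Dimca--Sticlaru line of argument), and give separate treatment to the $\lambda=1$ piece, as you yourself flag; absent that, the conclusion does not follow from the construction you give.
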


%The computations in the relevant example rely on Theorem \ref{thm-H-broot}
%above.

%%%%%%%%%%%%%%%%%%%%%%%%%%%%%%%%%%%%%%%%%%%%%%%%%%%%%%%%%%%%%%%%%%%%%
\section{Stratifications  and homogeneity}
%%%%%%%%%%%%%%%%%%%%%%%%%%%%%%%%%%%%%%%%%%%%%%%%%%%%%%%%%%%%%%%%%%%%%%

We collect in this section useful facts about stratifications and
homogeneity properties.
As always, $X$ is a complex manifold with structure sheaf $\calO_{X}$ of
holomorphic functions. Points of $X$ are usually denoted $\x$ and the
stalk of an object at $\x$ by $(-)_\x$.

%%%%%%%%%%%
\subsection{Stratifications}

\subsubsection*{Whitney stratifications}

%\begin{dfn}
%Given a closed analytic subset $Y$ of $X$,
%$Y=\bigsqcup W_\alpha$ is a \emph{Whitney stratification} if and only if 
%\begin{enumerate}
%\item there are finitely many strata, each one an irreducible
%  submanifold of $X$;
%\item for any pair $U,V$ of strata and for any sequence of points
%  $u_i$ in $U$ with limit $v\in V$ and for all sequences of
%  $k$-dimensional subspaces of $T^*_{x_i}(U)$ with limit $L$ in
%  $T^*_v(Y)$ the limit $L$ is contained in $T^*_v(V)$;
%\item for $U$, $V$, $u_i$, $L$ as before, and a second sequence  of
%  points $u'_i\in U$ with limit $v$, if the limits of the secants
%  between $u_i$ and $u'_i$ exists then it is in $L$.
%\end{enumerate}
%\end{dfn}
Whitney \cite{Whitney-Annals65} showed that all closed analytic spaces
$Y\subseteq X$ have a \emph{Whitney stratification}, satisfying
certain conditions on limits of tangent spaces of strata.  If
$Y'\subseteq Y$ are closed analytic, there exist Whitney
stratifications $\Sigma_{Y,Y'}$ of $Y$ such that each stratum is
either contained in or disjoint to $Y'$, and in addition the strata
inside $Y'$ form a Whitney stratification for $Y'$.

Each analytic space, except for finite sets of points, has infinitely many
Whitney stratifications. There is however, one distinguished such:
\begin{dfn}
The \emph{canonical Whitney stratification} $\Sigma_{Y}$ of the
analytic set $Y$ is defined inductively as follows:
\begin{itemize}
\item $W_0$ is the regular part of $Y$;
\item if $Y^k$ denotes $Y\minus\bigcup_{i=0}^{k-1} W_i$ then $W_k$ is
  for $k>0$ the points of the regular part $Y^k_\reg$ of $Y^k$ at which
  the
  Whitney conditions for the pairs $(W_i,Y^k_\reg)$ are satisfied for
  all $i<k$.
\end{itemize}
\end{dfn}

\subsubsection*{Logarithmic stratification}
If $\delta$ is an analytic vector field on $X$ then
integrating $\delta$ leads to a foliation of $X$ into curves near any
point where $\delta$ does not vanish. The following definition was
made by K.~Saito for divisors $Y$.
\begin{dfn}\label{dfn-log-fields}
For an ideal sheaf $\calI_Y\subseteq\calO_X$, let
$\Der_X(-\log Y)$ be the sheaf generated locally by the vector fields
$\delta$ with $\delta\bullet(\calI_Y)\subseteq \calI_Y$.
\end{dfn} 
%$\Der_X(-\log Y)$ is independent of the choice of analytic
%function cutting out $Y$;
It follows from the product rule that
logarithmic derivations are indeed a function of the ideal and not of
a set of chosen generators.
If
$f$ cuts out $Y$ we use $\Der_X(-\log Y)$ and $\Der_X(-\log f)$
interchangeably, but we distinguish between logarithmic data along $Y$
and its reduced scheme $Y_\red$. 

For a reduced divisor $Y=Y_\red$, K.~Saito \cite{Saito-logarithmic}
introduced logarithmic derivations, and also the following concept.
\begin{dfn} \label{dfn-log-strat}
Pick $\x,\x'\in X$ and write $\x\sim_Y \x'$ if there exist an open
set $U\subseteq X$ containing $\x,\x'$ and a derivation
$\delta\in\Der_U(-\log (Y\cap U))$, 
vanishing nowhere on $U$, such
that one of the integrating curves of $\delta$ passes through both
$\x$ and $\x'$.

Varying over all open $U\subseteq X$ and all $\delta \in\Der_U(-\log
(Y\cap U))$ one induces an equivalence relation denoted $\x \approx_Y
\x'$.  The strata of the {\em logarithmic stratification}
$\frakS_{X,Y}$ of $X$ induced by $Y$ are then by definition the
irreducible components of the cosets of the relation $\approx_Y$.
The set of strata includes $X\minus Y$, and
the components of the non-singular locus  of $Y_{\text{red}}$. 
\end{dfn}

Saito noted cases where this stratification is not locally finite: the
dimension of $\{\x\in X\mid \rk_\CC(\Der_X(-\log Y)\otimes
\calO_{X,\x}/\frakm_\x)=i\}$ can be greater than $i$.
\begin{exa}\label{exa-saito}
Let $Y=\Var(xy(x+y)(x+zy))\subseteq X=\CC^3$. Then $\Der_{X}(-\log Y)$
vanishes identically on the $z$-axis and the $z$-axis is an
irreducible component of $\{\x\in X\mid \rk_\CC(\Der_X(-\log
Y)\otimes_{\calO_X} \calO_{X,\x}/\frakm_\x)=0\}$.  Each point on the
$z$-axis is its own logarithmic stratum.  
\schluss
\end{exa}
\begin{dfn}[\cite{Saito-logarithmic}]\label{dfn-holonomic}
If the logarithmic stratification induced by $Y$ on $X$ is everywhere
locally finite then  $Y$ is called \emph{Saito-holonomic}.
\end{dfn}
From now on, $Y$ will be a divisor, not necessarily reduced.
\begin{rmk}\label{rmk-Whitney-diffeo-product}
\begin{asparaenum}
\item\label{rmk-Whitney-product} Let $\sigma$ be a
  positive-dimensional stratum in a Whitney stratification for
  $\Var(f)=Y\subseteq \CC^n$, and let $\x$ be a point in $\sigma$.
  Thom and Mather have shown that near $\x$ there is a homeomorphism
  of germs between $(\CC^n,\Var(f))$ and $(\CC\times
  \CC^{n-1},\CC\times \Var(g))$ where $g=g(x_2,\ldots,x_n)$ for
  suitable $g$.
\item The homeomorphisms of the previous item can in general not be
  chosen differentiably: the divisors to $xy(x+y)(x+2y)$ and
  $xy(x+y)(x+zy)$ in $\CC^3$ are locally topologically equivalent
  outside $z(z-1)(z-2)=0$. Yet, a differentiable isomorphism of the
  germs would induce a correspondence of their logarithmic
  stratifications, which is manifestly impossible along the $z$-axis.
\item Let $Y\subseteq X=\CC^n$ be Saito-holonomic and let
  $\x\in\sigma$ be a point in a logarithmic stratum. Then the
  evaluation of $\Der_{X,\x}(-\log Y)$ at $\x$ spans the tangent space
  of $\sigma$ at $\x$. Note that in Example \ref{exa-saito}, the
  points where the logarithmic vector fields all vanish forms a
  positive-dimensional set, corresponding to a locally infinite
  stratification. The loci $\{\x\in X\mid \rk_\CC(\Der_X(-\log
  Y)\otimes_{\calO_X} \calO_{X,\x}/\frakm_\x)=i\}$ are the candidates
  for logarithmic strata.  They can be computed via Fitting ideals and
  are certified as holonomic strata if they have the expected
  dimension.
\item \label{rmk-Whitney-splitting} Suppose $Y$ is
  Saito-holonomic. Near $\x\in\sigma$, consider the foliation of $X$
  obtained by integrating $\dim(\sigma)$ many independent elements
  $\delta_i$ in $\Der_X(-\log f)$ with $\delta_i\bullet(f)=0$ that do
  not vanish near $\x$ (any such collection is involutive). A choice
  of $n-\dim(\sigma)$ independent vector fields that are transversal
  to $\sigma$ at $\x$ induces a local analytic isomorphism between the
  pair $(X,Y)$ and the product of $\CC^{\dim\sigma}$ with a pair
  $(\CC^{n-\dim(\sigma)}=X',Y')$ where the divisor $Y'\subseteq X'$ is
  a cross-section of $Y$ transversal to $\sigma$ at $\x$.
  Compare \cite[3.5, 3.6]{Saito-logarithmic}.
%\item The splittings for Saito-homogeneous divisors of the previous
%  item can sometimes be used in proofs to reduce to the situation
%  where $\dim(\sigma)=0$. In doing so, one typically needs to change the
%  defining equation, and hence must make sure that this does not
%  impact the objective of the proof.
\item Whitney shows in \cite{Whitney-Annals65} that the canonical
  Whitney stratification $\Sigma_{X,Y}$ is stable under all local
  analytic isomorphisms of $X$ that fix $Y$.  In the Saito-holonomic case,
  the local product structure implies that the logarithmic
  stratification refines the canonical Whitney stratification and is
  Whitney itself, compare \cite[Prop.~3.11]{DamonMond} and
  \cite{DamonPike} for further details.
\item To any ideal $\calI_Y$ in $\calO_X$ one can associate the (left)
  ideal in the sheaf of $\CC$-linear differential operators $\calD_X$
  on $X$ generated locally by the logarithmic derivations along
  $Y$. The cokernel $M^{\log Y}$ of this ideal may or may not be
  holonomic in the sense of Kashiwara, even for divisors. The notions
  of holonomicity are not the same: while Saito-holonomicity implies
  that $M^{\log Y}$ is a holonomic module in the free case 
  \cite[(3.18)]{Saito-logarithmic}, the example $(x-yz)(x^4+x^4y+y^4)$
  shows that the implication cannot be reversed.
\schluss
\end{asparaenum}
\end{rmk}

%\begin{rmk}
%Several of our arguments below argue as follows on a smooth manifold
%$M$. Suppose given is a list of hypotheses that are local (that is,
%they hold precisely if the hold locally) as well as a desired local
%conclusion. Assume that the conclusion has been shown to hold on the
%largest stratum of some Whitney stratification. Let $S$ be a largest
%stratum on which the conclusion is not established yet. Whitney theory
%provides at any point of the stratum $S$ a local coordinate change
%such that the stratum becomes the locus of $x_1=\ldots
%x_l=0$. Considering a cross section,  

%%%%%%%%%%%
\subsection{Homogeneity conditions}

%Homogeneity of a divisor generally has good effects on various
%aspects. 
%\uli{clean up this defn}
\begin{dfn}\label{dfn-Euler}
Let $\x\in X$.
\begin{itemize}
\item $f\in\calO_{X,\x}$ is \emph{Euler-homogeneous} at $\x$
  if there is a vector field $E_\x$ on
  $\calO_{X,\x}$ 
  with $E_\x\bullet(f)=f$.  If $E_\x$ can be chosen
  to vanish at
  $\x$ then $f$ is called \emph{strongly Euler-homogeneous} at $\x$.
\item $f\in\calO_X$ is (strongly) Euler-homogeneous if it is so at
  each $\x\in \Var(f)$.
%\item NOT NEEDED? $f$ is \emph{locally }(\emph{weakly}) \emph{quasi-homogeneous}
%  if near all $\x\in \Var(f)$ there is a local coordinate system
% $\{x_i\}$ and a positive (resp.~non-negative) weight vector
%  $a=\{a_1,\ldots,a_n\}$ with respect to which $f=\sum_{i=1}^n a_i
%  x_i\del_i(f)$.
%\item We reserve \emph{homogeneous} and \emph{quasi-homogeneous} for
%  the case when $X=\CC^n$ and $f$ is globally homogeneous or
%  quasi-homogeneous.
%\item The analytic divisor $Y$ of the complex manifold $X$ is
%  \emph{Euler-homogeneous} if for all points $\x$ of $Y^{an}$ there is
%  a derivation $\xi_\x$ on the local ring $\calO_{X,\x}$ such that
%  $\chi_\frakp(f_\x)=f_\x$ where $f_\x$ is a function germ at $\x$
%  defining $Y$ near $\x$. 
%\item If on an Euler-homogeneous $Y$ each derivation $\chi_\x$ above
%  can be chosen to vanish at $\x$ we call $Y$ \emph{strongly Euler
%    homogeneous}.
\item A (not necessarily reduced) divisor $Y\subseteq X$ is (strongly)
  Euler-homogeneous, if near every $\x\in Y$ there is a (strongly)
  Euler-homogeneous germ $f_\x\in\calO_{X,\x}$ with $Y_\x=\Div(f_\x)$.
\end{itemize}
\end{dfn}

\begin{rmk}\label{rmk-Ehom}
\begin{asparaenum}
\item Any $f\in\calO_X$ is strongly Euler-homogeneous in every
  smooth point of the associated reduced divisor.
\item If $u\in\calO_U$ is a
  unit then Euler-homogeneity is not always inherited from $f$ to $uf$:
  $\exp(z)\cdot f(x,y)$ is Euler-homogeneous via $\del_z$, but $f$ has
  no reason to be so as well.
\item In contrast, $f$ is strongly Euler-homogeneous on $U$ if and
  only $uf$ is. Indeed, if $E_\x$ is a strong Euler field for $f$ then
  $\frac{uE_\x}{u+E_\x\bullet(u)}$ is one for $uf$.
%\item Euler-homogeneity is an analytic property (in the sense that
%  $\calO_{X,\x}$ and $\theta$ are in the analytic
%  category), even if $f$ is algebraic. In particular, 
%\item 
%Global homogeneity is neither stronger nor weaker than strong
%homogeneity: consider the divisors defined by $xy(x+y-1)$ and
%$z(x^4+xy^4+y^5)$. The latter is not strongly
%Euler-homogeneous at any point $(0,0,\lambda)$ with
%$\lambda\not=0$.\uli{check}
\item \label{rmk-Ehom-item-product} Suppose $Y\subseteq X$ is a
  divisor. If there is an analytic splitting $(X,Y)=\CC\times (X',Y')$
  then $Y$ is strongly Euler-homogeneous if and only if $Y'$ is,
  \cite[Lem.~3.2]{GrangerSchulze-Compos06}.  On the other hand,
  Euler-homogeneity is not necessarily inherited from $Y$ to $Y'$:
  $\CC\times Y'$ is always Euler-homogeneous thanks to the exponential
  function.
\item Euler-homogeneity of $f$ is an open condition, while strong
  Euler-homogeneity is not. For example,
% $f_\lambda=\lambda x^4+y^5+x*y^4$
%  is not in the Jacobian ideal of $f$ for $\lambda\not =0$, and so $f$
%  cannot have an Euler-homogeneity. On the other hand,
  $f=zx^4+xy^4+y^5$ is strongly Euler-homogeneous at the origin, but
  it cannot be so along the $z$-axis because otherwise $f$
  should be in the ideal $(x,y,z-\lambda)\cdot(f_x,f_y,f_z)$, which it
  is not unless $\lambda=0$.
%\item A strongly Euler-homogeneous section is not a constant.
\item A strongly Euler-homogeneous divisor may fail to be Saito
  holonomic: consider $f=xy(x+y)(x+zy)$. Indeed, along the $z$-axis, $f$
  has the strong Euler derivation $(x\del_x+y\del_y)/4$; along the
  $y$-axis and along the line $x+y=z-1=0$, $f$
  has the strong Euler derivation $(x+zy)\del_z$; 
  in all other places 
  $\Var(f)$ is smooth. On the other hand, every point on the $z$-axis
  is a logarithmic stratum.
\item \label{rmk-splitting-strE} If $f$ is strongly Euler-homogeneous
  and Saito-holonomic at $\x$, $\x$ a point in the stratum $\sigma$,
  % then there are $n-1$ independent (at $\x$) vector fields in
  % $\Der_{X,\x}(-\log f)$ that annihilate $f$. Using them in Remark
  \ref{rmk-Whitney-diffeo-product}.\eqref{rmk-Whitney-splitting}
  yields a splitting where $f$ is constant along $\dim(\sigma)$
  directions of $X'$, and the transversal section is strongly
  Euler-homogeneous.
%\item For $f\in\calO_X$ strongly Euler-homogeneous  and for
%  $u\in\calO_{X,\x}^*$ a unit
%near $\x\in X$, the Jacobian ideals of $f$ and $uf$ agree.
%So, for strongly Euler-homogeneous
%divisors, the ideal $\calL_f$ from Definition~\ref{dfn-Lf} 
%is independent of the defining equations.
\schluss
\end{asparaenum}
\end{rmk}

\begin{dfn}\label{dfn-Der0}
For $f\in\calO_X$ we set
\[
\Der_{X}(-\log_0 f):=\Der_X(-\log
f)\cap\ann_{\calD_X}(f^s)=\{\delta\in\Der_X(-\log f)\mid
\delta\bullet(f)=0\}.
\]
\end{dfn}
\begin{rmk}\label{rmk-Der0}
\begin{asparaenum}
\item Geometrically, elements of %The modules $\Der(-\log f)$ and
                                 %$\Der(-\log (uf))$ agree whenever
                                 %$u$ is a unit (see
                                 %\cite{Granger-Schulze???}) and $f$
                                 %Euler-homogeneous. Indeed, if
                                 %$\theta(f)=\alpha\cdot f$ then
                                 %$\theta(uf)=u\alpha
                                 %f+\theta(u)f=(\alpha+\theta(u)/u)(uf)$
                                 %is also logarithmic along $uf$.
  $\Der_X(-\log f)$ are tangent to the hypersurface $\Var(f)$ while
  those in $\Der_{X}(-\log_0 f)$ are tangent to all level hypersurfaces of
  $f$.
\item Let $x, x'$ be two local coordinate systems at $\x\in X$. Then
  the gradients $\nabla_x(f)$ and $\nabla_{x'}(f)$ differ by the
  Jacobian matrix: $\nabla_x(f)=(\nabla_{x'}(f))\cdot ((\frac{\del
    x'}{\del x}))$. Hence, $\Der_{X}(-\log_0 f)$ varies in a dual
  fashion with the coordinate system.
\item If $u$ is a local unit, $E_\x$ a strong Euler-homogeneity for
  $f\in\calO_{X,\x}$ and $\delta\in\Der_{X,\x}(-\log_0 f)$ then
  $\delta-\frac{\delta\bullet(u)}{u+E_\x\bullet(u)}E_\x\in\Der_{X,\x}(-\log_0
  (uf))$; this association is an $\calO_{X,\x}$-module
  isomorphism.
\item If $f$ is Euler-homogeneous at $\x$, then 
  %the Jacobian ideal contains $f$ near $\x$ and is hence the same for
  %$f$ and for $uf$.  Moreover, in this case
  the maps
\begin{gather}
\Der_{X,\x}(-\log f)\ni \delta\mapsto \delta-\frac{\delta\bullet(f)}{f}
\cdot E_\x\in \Der_{X,\x}(-\log_0 f),\\ 
\Der_{X,\x}(-\log f)\ni \delta\mapsto \frac{\delta\bullet(f)}{f} 
\cdot E_\x\in \calO_{X,\x}\cdot E_\x
\end{gather}
give a split exact sequence
\[
0\to\Der_{X,\x}(-\log_0 f)\to \Der_{X,\x}(-\log f)\to \calO_{X,\x}\cdot E_\x\to 0.
\]
These splittings depend on the choice of the equation $f$ for 
the divisor.
\schluss
\end{asparaenum}
\end{rmk}
\begin{rmk}
If $Y\subseteq X$ are algebraic, then logarithmic vector fields and
differentials, as well as differential annihilators can be defined
in both the analytic and the algebraic category.  Since they are
all defined by syzygies between derivatives of $f$, the analytic
objects are the pullbacks to the analytic category of the algebraic
objects. Questions about their generation and homological properties
can hence be investigated on either side.\schluss
\end{rmk}

%%%%%%%%%%%%%%%%%%%%%%%%%%%%%%%%%%%%%%%%%%%%%%%%%%%%%%%%%%%%%%%%%%
\section{The \logarithmic complex}\label{sec-complex}
%%%%%%%%%%%%%%%%%%%%%%%%%%%%%%%%%%%%%%%%%%%%%%%%%%%%%%%%%%%%%%%%%%

%We denote by $X=\CC^n$ the algebraic or analytic manifold and let
%$T^*X$ be the cotangent bundle with structure sheaf $\calO_X[y]$ where
%$y=y_1,\ldots,y_n$ are indeterminates.

%\begin{con}\uli{not really needed}
%In this section we make the following running assumption (recalling
%that this is a local statement at each $\x\in X$):
%\begin{verse}
%$f\in \calO_X$  is Euler-homogeneous, $E_\x(f)=f$ near $\x\in
%  X$.
%\end{verse}
%\end{con}

As always, $X$ is a complex analytic manifold. 
Let $f$ be a global section of $\calO_X$.
We consider the principal $\calD_X[s]$-module $\calD_X[s]\bullet f^s$
generated by the symbol $f^s$ subject to local relations spelled out in
\eqref{eqn-intro-fs}. 
Whenever $f$ is Euler-homogeneous, $\calD_X[s]\bullet f^s$
  and $\calD_X\bullet f^s$ agree.
% in the sense that 
%  $\ann_{D[s]}(f^s)=D[s]\cdot(\chi-s,\ann_D(f^s))$. 
In this section we determine the structure of $\ann_{\calD_X[s]}(f^s)$
for a large family of divisors with good homogeneity conditions by
computing its characteristic cycle.

\subsection*{The \logarithmic ideal}
%%%%%%%%%%%%%%%%%%%%%%%%%%%%%%%%%%%%%%

For any filtration $F$ on a ring $A$ we denote by $\gr_F(A)$ the
associated graded object, and by $\gr_F(a)$ the image of $a\in A$ in
$\gr_F(A)$.  The order filtration on $\calD_X$ leads to a sheaf of
$(0,1)$-graded commutative rings $\gr_{(0,1)}(\calD_X)$
% locally equal to the polynomial ring
%$\calO_X[y]$ in $2n$ variables 
whose sections are naturally identified with the functions on $T^*X$
that are polynomial in the cotangent directions.

%The \emph{characteristic cycle} is the formal sum of the
%minimal components of $\charVar(\calM)$ counted with multiplicities
%arising in $\gr_{(0,1)}(\calM)$.

\begin{dfn}\label{dfn-Lf}
Let the \emph{\logarithmic ideal} be the ideal $\calL_f\subseteq
\gr_{(0,1)}(\calD_X)$ generated by the $(0,1)$-symbols (that is, the
lead terms under the order filtration) of $\Der_{X}(-\log_0
f)\subseteq\ann_{\calD_X[s]}(f^s)$:
\[
\calL_f=\gr_{(0,1)}(\calD_X)\cdot \gr_{(0,1)}(\Der_{X}(-\log_0 f)).
\]
\end{dfn}

\begin{rmk}\label{rmk-Lf2}

Suppose $f$ is Euler-homogeneous at $\x\in X$: $E_\x(f)=f$. In this
remark, we read derivations as formal $\calO_X$-linear combinations on
the derivatives $f_i=\del_i\bullet(f)$. In this sense, elements of
$\Der_X(-\log_0 f)$ are syzygies.
\begin{asparaenum}
\item Let $x,x'$ be two coordinate systems with (column) vectors of
  partial differentiation operators $\del,\del'$. Denote
  $c_\delta,c'_\delta$ the coefficient (column) vectors of a
  derivation $\delta=c_\delta^T\cdot\del={c'_\delta}^T\cdot\del'$ in
  the two coordinate systems. If $J=((\del x'_j/\del x_i))$ is the
  Jacobian matrix, $\del=J\cdot\del'$ and $c'_\delta=J^T\cdot
  c_\delta$. In particular, $\Der_X(-\log_0 f)=J^{_T}\cdot
  \Der'_X(-\log_0f)$ and so $\calL_f$ is well-defined.

\item Let $u\in\calO_{X,\x}$ be a unit. The $\calO_{X,\x}[s]$-algebra
  automorphism $\del_i\to \del_i-\frac{\del_i\bullet(u)}{u}s$ on
  $\calD_{X,\x}[s]$ identifies $\ann_{\calD_{X,\x}[s]}(f^s)$ with
  $\ann_{\calD_{X,\x}[s]}(u^sf^s)$. On the level of derivations, if
  there is a strong Euler field $E_\x$ for $f$ at $\x$, this
  corresponds to an $\calO_{X,\x}$-isomorphism $\alpha_u\colon
  \Der_{X,\x}(-\log_0 f)\to \Der_{X,\x}(-\log_0(uf))$ via
  $\delta\mapsto\delta-\frac{\delta\bullet(u)}{u+E_\x\bullet(u)}E_\x$ from
  Remark \ref{rmk-Der0}.
\item If $\nabla(u),\nabla'(u)$ are the gradient (column) vectors of
  $u$ in the two coordinate systems, $\nabla(u)=J\cdot\nabla'(u)$.  The
  $\calO_{X,\x}$-isomorphism $\alpha_u$ from the previous item  sends
  $c_\delta^T\cdot\del$ to $c_\delta^T\cdot(I_{n}-\frac{\nabla(u)\cdot
  c_{E_\x}^T} {u+E_\x\bullet(u)})\cdot\del$, where $I_n$ is the
  identity matrix.  On the other hand, in $x'$-coordinates,
  $c_\delta^T\cdot\del=c_\delta^T\cdot J\cdot \del'$ is sent to
  $(c_\delta^T\cdot J)(I_{n}-\frac{\nabla'(u)\cdot
    {c'}_{E_\x}^T}{u+E_\x\bullet(u)})\del'$. A simple calculation
  shows now that $\alpha_u$ commutes with coordinate changes.
\item One may consider the symbols $y_f=\gr_{(0,1)}(\del)$ as
  indeterminates over $\calO_X$, labeled by the choice of the defining
  equation $f$ for a fixed strongly Euler-homogeneous
  divisor. Locally, the maps $\alpha_u$ allow to identify these
  symbols, respecting the action by the Jacobian under a coordinate
  change.  With $(y_{uf})^T(I_{n}-\frac{\nabla'(u)\cdot
    {c'_{E_\x}}^T}{u+E_\x\bullet(u)})=y_f^T$ there is a local
  $\calO_X$-isomorphism of $\gr_{(0,1)}(\calD_X)$ sending
  $\gr_{(0,1)}(\delta)\mapsto
  \gr_{(0,1)}(\delta)-\frac{\delta\bullet(u)}{u+E_\x\bullet(u)}\gr_{(0,1)}(E_\x)$
  that commutes with coordinate changes, patches over the domain where
  $E_\x$ and $u$ are defined, and sends $\calL_f$ to $\calL_{uf}$.

  It follows that if $Y$ is a strongly Euler-homogeneous divisor then
  the local geometric and algebraic properties of $\calL_Y$ are
  independent of the choice of the local equation cutting out $Y$.
\item For any coherent $\calD_X$-module $\calM$, filtered compatibly
  with the order filtration on $\calD_X$, the support in the cotangent
  bundle $T^*X$ of the associated graded object $\gr_{(0,1)}(\calM)$
  is the \emph{characteristic variety} $\charVar(\calM)$.

Generically (in $X$), $\calL_f$ is of height $n-1$ in
$\gr_{(0,1)}(\calD_X)$ since $\Der_{X}(-\log_0 f)$ is of rank
$n-1$. Wherever some derivative $f_i$ is nonzero, the ideal $\calL_f$
is cut out by $\{y_j-\frac{f_j}{f_i}y_i\}$. In fact, (see, e.g.,
\cite{KashiwaraBook}) over the smooth locus of $f$ the characteristic
variety of $\calD_X\bullet(f^s)$ is locally a smooth complete
intersection of codimension $n-1$, and cut out by $\calL_f$.
\schluss\end{asparaenum}
\end{rmk}

\subsection{The complex}\label{subsec-complex}
%%%%%%%%%%%%%%%%%%%%%%%%%%%

Any symplectic manifold $X$ admits a one-form on the cotangent bundle
whose differential is the symplectic form. Any such one-form, not
necessarily unique, is called a \emph{symplectic potential}.  Let $X$
be a complex analytic manifold; it then has a natural symplectic
structure. The \emph{Liouville form} on $X$ is the unique symplectic
potential on $X$ that in local (Darboux) coordinates takes the form
\[
\ydx:=y_1\de x_1+\ldots+ y_n\de x_n
\]
with base coordinates $x$ and corresponding cotangent coordinates $y$. 

As functions on the cotangent bundle fibers, the $y_i$ can be
identified with the symbols of $\del_i$ in the corresponding
coordinate system.  This makes it clear that $\ydx$ is independent of
the coordinate system, since $\del_i$ and $\de x_i$ vary dually with any
coordinate transformation. 
%We use the symbol $\ydx$ to denote the
%global one-form so defined.

For any $\calO_X$-module $\calM$, write 
\[
\calM[y]:=\pi_*\pi^*\calM=\calM\otimes_{\calO_X}\pi_*(\calO_{T^*X})
\] 
where $\pi\colon T^*X\to X$ is the natural projection from the
cotangent bundle.

\begin{dfn}
Suppose $f$ is a global non-constant function on $X$.  The modules of
the \emph{\logarithmic complex $C^\bullet_f$ of $f$} are defined by
\[
\Omega^n_X\otimes_{\calO_X}\frac{1}{f}\calO_X\otimes_{\calO_X}
C^i_f=\Omega_{X}^i(\log_0 f)[y]\subseteq \frac{1}{f}\Omega^i_X[y],
\]
\emph{i.e.}, up to twist by $1/f$ they are the kernels of $\de f\wedge(-)$ on
$\left(\omega_X^{-1}\otimes_{\calO_X}\Omega^\bullet_X\right)[y]$ where
$\omega_X^{-1}$ is the inverse of the invertible sheaf $\Omega^n_X$. 
The differential of $C_f^\bullet$ is
given by the exterior product with the Liouville form $\ydx$.
\end{dfn}

\begin{rmk}\label{rmk-Cf}
\begin{asparaenum}
\item 
%The module $\Omega^i(\log f)[y]$ is the submodule of
%  $\frac{1}{f}\Omega^i[y]$ that is in the kernel of $\dff$.  
Since $\de f \wedge\ydx=-\ydx\wedge\de f$, $C^\bullet_f$ is indeed a
complex in the category of $\calO_X[y]$-modules.
%Nneither
%$C^\bullet$ nor $K^\bullet$ is in general closed under the de Rham
%differential.  
%\item The construction of $C^\bullet_f$ localizes on $X$ and yield a sheaf.
\item Locally, $(C^\bullet_f,\ydx)$ can be identified with
  $(\Omega^\bullet_X(\log_0 f)[y],\ydx)$.  On $X=\CC^n$, with a chosen
  coordinate system $x$ and corresponding derivations $\del$, identify
  $y$ with the order symbols of $\del$, and $\gr_{(0,1)}(\calD_X)$
  with $\calO_X[y]$. Denote by $K^\bullet$ the cohomological Koszul
  complex of the regular sequence $y$ on $\frac{1}{f}\calO_X[y]$ and
  identify $K^i= \frac{1}{f}\bigwedge^{n-i}(\calO_X[y])^n $ with
  $\frac{1}{f}\Omega^i_X[y]$ in the usual way. Then $C^\bullet_f$ is
  isomorphic to the restriction of $K^\bullet$ to the complex whose
  modules are the kernels in the Koszul complex on $\calO_X[y]$
  induced by $f_1=\del_1(f),\ldots,f_n=\del_n(f)$. As such, locally
  in a chosen coordinate system, the Liouville complex is 
  isomorphic to the approximation complex $\calC$ from
  \cite{HerzogSimisVasconcelos} and we use some of their techniques. 
%Let $\Omega^i(\log f)[y]=\Omega^i(\log f)\otimes_{\CC}\CC[y]$.
\item \label{rmk-Cf-grading}
  The induced $y$-grading on $\gr_{(0,1)}(\calD_X)\cong
  \calO_X[y]$ makes $C^\bullet_f$ graded if we position
  $\calO_X\subseteq C^n_f$ in degree zero, and more generally the
  elements of $f\cdot\omega_X^{-1}\otimes_{\calO_X}\Omega^i_X(\log
  f)\subseteq f\cdot\omega_X^{-1}\otimes_{\calO_X}\Omega^i_X(\log
  f)[y]=C^i_f$ in degree $n-i$. This agrees with the intrinsic
  $y$-grading from the definition.

  We write $C^i_{f,\x}$ for the stalk of $C^i_f$ at $\x\in X$ (that
  is, we tensor with $\calO_{X,\x}$) and denote the part of $C^i_f$ in
  degree $j$ by $(C^i_f)_j$,
  \[
  C^i_f=\bigoplus (C^i_{f})_j.
  \]
  We have
  $\pdim_{\calO_{X,\x}[y]}(C^i_{f,\x})=\pdim_{\calO_{X,\x}}
  (C^i_{f,\x})_j$ whenever $(C^i_{f,\x})_j\neq
  0$ since 
\[
C^i_f=(C^i_{f})_{n-i}\otimes_{\calO_X}  \gr_{(0,1)}(\calD_X).
\] 
\item \label{rmk-Cf-syz}
In each stalk on $X$, the module $(C^i_f)_{n-i}$ is, up to a free
summand, a second syzygy (of the cokernel of $\de f\wedge$). 
By \cite[Thm.~3.6]{EvansGriffith-Syzygies}, the property of
$(C^i_f)_{n-i}$ of being a second syzygy over $\calO_X$ is equivalent
to it being $\calO_X$-reflexive since a regular ring is a normal
domain. Being a second syzygy also forces the $\calO_X$-projective
dimension of $(C^i_f)_{n-i}$ to be at most $n-2$.  Both properties
percolate to $C^i_f$.

%\item 
%  Since $\ydx$ is invariant under coordinate changes, the
%  identification of $y$ with the symbol of $\del$ respects the two
%  possible gradings during coordinate changes: the intrinsic
%  $y$-grading from the definition is matched with the grading in
%  $\gr_{(0,1)}(\calD_X)$.
  %A coordinate change on $X$ incurs, via the Jacobian matrix, dual
  %  transformations on the differential $dx$ and the derivation
  %  symbol $y$. %If follows that $\ydx$ is invariant and the modules
  %  of logarithmic differential correspond under the induced
  %  isomorphism of modules.
  \item \label{rmk-Cf-fields-diffs}
  The isomorphism
  \[
  \Omega^n\otimes_{\calO_X}\frac{1}{f}\calO_X\otimes_{\calO_X}\Der_{X}(-\log_0
  f)\ni \frac{\de x}{f}\otimes \sum_{i=1}^n a_i\del_i\longleftrightarrow
  \frac{\sum_{i=1}^n(-1)^ia_i\widehat{\de
    x_i}}{f}\in\Omega^{n-1}_{X}(\log_0 f)
  \]
  shows that $\calO_X[y]/\calL_f$ is the terminal cohomology group of
  $C^\bullet_f$.
%\uli{the next might already be somewhere}
%\item\label{rmk-Lf1} Consider the local situation at $\x\in X=\CC^n$
%  with a chosen coordinate system $x$.  Then
%  $\gr_{(0,1)}(\calD_{X,\x})$ can be identified with the polynomial
%  ring $\calO_{X,\x}[y]$ where $y_1,\ldots,y_n$ are the symbols of
%  $\del_1,\ldots,\del_n$. In this case, $\calL_f$ is the ideal
%  generated by the contraction of $\Der_0(-\log f)$ against
%\[
%\ydx=y_1\de x_1+\ldots+y_n\de x_n.
%\]
%  This formulation shows that $\calL_f$ is invariant under coordinate
%  changes, and hence a sheaf: the Jacobian matrix of a coordinate
%  change acts in dual ways on the derivation symbol
%  $y=\gr_{(0,1)}(\del)$ and on the differential $\de x$.
\item \label{rmk-Cf-dual} In general, one has $\Omega^i_X(\log f)=
  \left(\bigwedge^i(\Der_X(-\log f))^*\right)^{**}$, where the star
  denotes $\Hom_{\calO_X}(-,\Omega^n_X(\log f))$. If $f$ is strongly
  Euler-homogeneous (or more generally if $E$ is an Euler field for
  $f$ with $u+E\bullet(u)$ nonzero), Remark~\ref{rmk-Der0} implies
  that there is an $\calO_X[y]$-automorphism of $\gr_{(0,1)}(\calD_X)$
  that carries $\Omega^{i}_{X}(\log_0(f))[y]$ into
  $\Omega^i_{X}(\log_0(uf))[y]$. In particular, $C^\bullet_f$ and
  $C^\bullet_{uf}$ have the same algebraic properties.
  \schluss\end{asparaenum}
\end{rmk}

%%%%%%%%%%%%%%%%%%%%%%
\subsection{Exactness}\label{subsec-exact}

\begin{ntn}
Throughout we use the word \emph{resolution} to denote a finite complex with
a unique cohomology group (at its end). No specific properties of the
modules involved (such as freeness, projectivity or injectivity) are
implied.
\end{ntn}

We show here that $C^\bullet_f$ is a resolution of $\calL_f$ when
$n\le 3$ or if the modules $\Omega^i_X(\log f)$ have locally high
depth over $\calO_X$.  For this, we need to slightly generalize the
construction of $C^\bullet_f$.
\begin{ntn}
Let $\phi=\phi_1,\ldots,\phi_n$ be in $\calO_X$, $X$ smooth and
affine.  Then let $C^\bullet_\phi$ be the (graded) subcomplex of
$(\Omega^\bullet_X[y],\ydx\wedge)$ given by the kernels of
$\phi\wedge(-)$, where we read $\phi$ as the coefficients of a
one-form. One can, as for $C^\bullet_f$, ask whether it is a
resolution.
\end{ntn}

\begin{lem}\label{lem-phi}
The complexes $C^\bullet_\phi$ and $C^\bullet_{g\phi}$ agree for all
nonzero $g\in\calO_X$. If $\phi$ generates an ideal of height at least
two at $\x\in X$ then $C^\bullet_\phi$ is exact at $\x$ for all
$i\le\min(\hight_\x(\phi),n-1)$ where $\hight_\x(\phi)$ is the 
height of the ideal generated
by $\phi$ in $\calO_{X,\x}$.
\end{lem}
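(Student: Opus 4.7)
The first assertion is immediate: since $X$ is smooth, each stalk $\calO_{X,\x}$ is a regular, hence integral, domain, so multiplication by a nonzero element $g$ is injective. Therefore the operators $\phi\wedge$ and $(g\phi)\wedge = g\cdot(\phi\wedge)$ have identical kernels in every form-degree, giving $C^\bullet_\phi = C^\bullet_{g\phi}$ as subcomplexes of $\Omega^\bullet_X[y]$.

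For the exactness claim, the plan is to exploit two classical Koszul vanishings together with the partial acyclicity of an approximation complex. Working stalkwise at $\x\in X$, set $R:=\calO_{X,\x}$ and $S:=R[y_1,\ldots,y_n]$. First, $y_1,\ldots,y_n$ is a regular sequence in $S$, so the cohomological Koszul complex $(\Omega^\bullet_X[y],\,ydx\wedge)$ is exact in degrees $<n$. Second, $\phi_1,\ldots,\phi_n$ generate an ideal of grade equal to $\hight_\x(\phi)$ in $R$ (using that $R$ is Cohen--Macaulay, being regular), so the cohomological Koszul complex $(\Omega^\bullet_X,\,\phi\wedge)$ is exact in degrees $<\hight_\x(\phi)$; flatness of $R\to S$ preserves this vanishing after tensoring.

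The structural observation that drives the rest is that $C^\bullet_\phi$ is locally the approximation complex of \cite{HerzogSimisVasconcelos} attached to $\phi_1,\ldots,\phi_n$: as spelled out in Remark~\ref{rmk-Cf}\eqref{rmk-Cf-syz}, the degree-$i$ module is (up to a free summand) the second $\calO_X$-syzygy $Z^i(\phi\wedge)\otimes_R S$, and the differential is induced by the Liouville form $ydx$. The required partial acyclicity---vanishing of $H^i(C^\bullet_\phi)$ for $i\le\min(\hight_\x(\phi),n-1)$---then follows from the spectral sequence of the natural sign-twisted bicomplex $E^{p,q} := \Omega^{p+q}_X[y]$ with differentials $\phi\wedge$ and $(-1)^p\,ydx\wedge$: by the two Koszul vanishings above, the $E_2$-pages of both the row- and column-filtration spectral sequences collapse in the relevant range, forcing $H^i(C^\bullet_\phi)=0$ there.

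The main technical obstacle is that both $\phi\wedge$ and $ydx\wedge$ raise form-degree rather than one raising and one lowering, so the natural bicomplex is ``thick'': each module $\Omega^k_X[y]$ appears at every node $(p,q)$ with $p+q=k$, and the totalization has $k+1$ copies of $\Omega^k_X[y]$ in total degree $k$. Bookkeeping at the endpoint $i=\min(\hight_\x(\phi),n-1)$, in particular ruling out stray higher differentials in the spectral sequence, requires some care; this is precisely the issue handled in~\cite{HerzogSimisVasconcelos} by working directly with Koszul cycles tensored with a polynomial ring rather than with the bicomplex in the abstract, and I would invoke their result to close the argument.
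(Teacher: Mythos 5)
The first assertion is handled the same way as in the paper; no comment there.

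For the exactness statement, your proposal identifies the right ingredients (the two Koszul vanishings for $\ydx$ and for $\phi$, and the HSV approximation-complex viewpoint) but does not actually assemble them into a proof. The bicomplex you set up, $E^{p,q}:=\Omega^{p+q}_X[y]$ with differentials $\phi\wedge$ and $\pm\,\ydx\wedge$, is not a genuine double complex: every module $\Omega^k_X[y]$ is smeared across an entire antidiagonal, the total complex picks up unwanted multiplicities, and there is no natural comparison between its cohomology and $H^\bullet(C^\bullet_\phi)$. You flag this yourself and then propose to ``invoke [HSV's] result to close the argument,'' but that is a citation, not a proof, and it is not at all apparent that HSV state the precise grade-dependent acyclicity range $i\le\min(\hight_\x(\phi),n-1)$ in the form you would need. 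As written, the argument has a real gap.

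The paper closes the same gap with a self-contained descent argument that you may want to compare against. Given a cocycle $\beta\in C^i_\phi$ with $i\le n-1$, exactness of the Koszul cocomplex of $\ydx$ yields $\beta=\ydx\wedge\gamma$; the assignment $\beta\mapsto\beta':=\gamma\wedge\phi$ is then checked to define an endomorphism of $H^i(C^\bullet_\phi)$ that strictly lowers $y$-degree. Iterating $k$ times until the class $\beta^{(k)}$ vanishes, the associated $\gamma^{(k)}$ is a cocycle for the Koszul cocomplex of $\phi$, hence a coboundary since $i\le\hight_\x(\phi)$; the resulting $\delta^{(k)}$ allows one to modify $\gamma^{(k-1)}$ and propagate triviality back up the descent chain to $\beta=\beta^{(0)}$. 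This uses exactly the two Koszul vanishings you isolate, but chains them through an explicit degree-lowering operator rather than through a (problematic) spectral sequence. If you want to pursue a spectral-sequence proof, you would first need to replace your $E^{p,q}$ with an honest bicomplex whose totalization relates to $C^\bullet_\phi$ --- and at that point you are essentially rederiving the relevant lemma of HSV from scratch.
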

\begin{proof}
The first part is clear as we calculate in a domain.

For the second part,  suppose
$\beta\wedge\phi=\beta\wedge \ydx=0$, $\beta\in\Omega^i_X[y]$,
$i\le n-1$.  Then $\beta=\ydx\wedge \gamma$ for
$\gamma\in\Omega^{i-1}_X[y]$.  Then $\beta':=\gamma\wedge\phi\in
C^i_\phi$ and $\beta'\wedge\ydx=0$.  One checks that this induces an
$\calO_X[y]$-endomorphism of $H^i(C^\bullet_\phi)$ sending the class
of $\beta$ to the class of $\beta'$ and which we denote $(-)_\phi'$.

Now $H^i(C^\bullet_\phi)$ is graded in $y$ and $(-)'_\phi$ reduces
degree by one. Thus, eventually $\beta^{(k)}:=(\beta^{(k-1)})'$ will
be the zero class and so $\beta^{(k)}=\gamma^{(k)}\wedge \ydx$ with
$\gamma^{(k)}\wedge \phi=0$. 
In a regular local ring,
$\hight_\x(\phi)$ is the largest $i$ such that the Koszul cocomplex of
$\phi$ on $\calO_X$ is exact in positions lower than $i$. 
Since $i\le \hight_\x(\phi)$,
$\gamma^{(k)}=\delta^{(k)}\wedge \phi$ for some $\delta^{(k)}\in
\Omega^{i-2}_X[y]$. 
Then
$-\delta^{(k)}\wedge \ydx+\gamma^{(k-1)}$ is in the kernel of $\phi$
and, like $\gamma^{(k-1)}$, multiplies against $\ydx$ to
$\beta^{(k-1)}$. So, if the class of $\beta^{(k)}$ in
$H^i(C^\bullet_\phi)$ is zero (that is, if $\beta^{(k)}$ is image under
$\wedge\ydx$ in $C^\bullet_\phi$) then the same is true for
$\beta^{(k-1)}$, and thus eventually of $\beta=\beta^{(0)}$.
\end{proof}

We make precise in the following definition the homological conditions
we require from our divisors; $U$ is an open set in $X$.

\begin{dfn}\label{dfn-tame}
The section $f\in \calO_U$ is \emph{tame} if for all $i$ the
projective dimension of $\Omega^i_U(\log f)$ over $\calO_U$ is at most
$i$ in each stalk. A divisor $Y$ is tame if it allows tame defining
equations locally everywhere.
\end{dfn}

\begin{thm}
\label{thm-C-exact}
For $f\in\Gamma(X,\calO_X)$, suppose that either
\begin{itemize}
\item $n\le 3$, or
\item $f$ is strongly Euler-homogeneous, 
Saito-holonomic, and tame.
%(cf.~\ref{dfn-tame}).
\end{itemize}
Then the \logarithmic complex $C_f^\bullet$ is a resolution of the
\logarithmic ideal $\calL_f$.
\end{thm}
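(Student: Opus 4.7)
The identification $H^n(C^\bullet_f)\cong\calO_X[y]/\calL_f$ was established in Remark~\ref{rmk-Cf}, item~\eqref{rmk-Cf-fields-diffs}; the task is therefore to show exactness of $C^\bullet_f$ in positions $0\le i<n$. The statement is local on $X$, and off the divisor $\Var(f)$ the complex reduces (up to twist) to a Koszul cocomplex on the Liouville form $\ydx$, which is exact. I handle the two alternative hypotheses separately.

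For the case $n\le 3$, write $\nabla f=d\cdot\phi'$ at $\x$ with $d=\gcd(f_1,\ldots,f_n)$ the greatest common divisor of the partials. By the first part of Lemma~\ref{lem-phi}, $C^\bullet_f=C^\bullet_{\phi'}$. The vanishing locus of $\phi'$ is the critical locus of $f_{\red}$; as the singular locus of a reduced hypersurface in $X$, it has codimension at least $2$, so $\hight_\x(\phi')\ge 2$ at every $\x\in X$. The second part of Lemma~\ref{lem-phi} then supplies exactness at every position $i\le \min(\hight_\x(\phi'),n-1)$, which for $n\le 3$ covers every position below the terminal one.

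For the tame, strongly Euler-homogeneous, Saito-holonomic case, I proceed by induction on the dimension $k$ of the logarithmic stratum $\sigma$ containing $\x$. If $k>0$, Remark~\ref{rmk-Ehom}, item~\eqref{rmk-splitting-strE}, furnishes a local analytic splitting of germs $(X,\Var f)\cong \CC^k\times(X',\Var f')$ under which all three hypotheses transfer to $(X',f')$ and the new base point lies in a logarithmic stratum of strictly smaller dimension. The splitting decomposes $\Omega^\bullet_X(\log_0 f)$ as a tensor product of the full de Rham complex of $\CC^k$ with $\Omega^\bullet_{X'}(\log_0 f')$, and decomposes the Liouville form as $\ydx=y\,\de x_{\CC^k}+y'\,\de x'$. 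This exhibits $C^\bullet_f$ as the totalization of a bicomplex whose columns are Koszul cocomplexes on the regular sequence of $\CC^k$-cotangent variables and whose rows are copies of $C^\bullet_{f'}$; a spectral-sequence argument transfers exactness from $C^\bullet_{f'}$ (known by induction) to $C^\bullet_f$.

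The core of the proof is the base case $k=0$: here $\{\x\}$ is itself a logarithmic stratum, every logarithmic derivation along $f$ vanishes at $\x$ by Remark~\ref{rmk-Whitney-diffeo-product}, part~(3), and no further splitting is available, so the tameness hypothesis must be used in full. The strategy is to propagate the bound $\pdim_{\calO_{X,\x}}\Omega^i_X(\log f)\le i$ (tameness) to the same bound on $\Omega^i_X(\log_0 f)$, using the short exact sequences relating $\log$ and $\log_0$ forms that are split by contraction with the vanishing strong Euler field $E_\x$ (via $\iota_{E_\x}\,\de f=f$). These projective-dimension estimates translate, through the $y$-grading on $C^i_f$ detailed in Remark~\ref{rmk-Cf}, item~\eqref{rmk-Cf-grading}, into depth bounds on each graded strand of $C^i_f$ as an $\calO_X$-module. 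Combined with the fact that $C^\bullet_f$ is already exact off the codimension-$\ge 2$ critical locus of $f_{\red}$ (by the low-dimensional case reasoning), the Buchsbaum--Eisenbud acyclicity criterion forces exactness of each $y$-graded strand throughout. The principal technical difficulty will be carrying out this bigraded depth bookkeeping carefully enough that the acyclicity criterion applies uniformly across all strands and positions.
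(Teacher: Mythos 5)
Your treatment of the $n\le 3$ case and of positive-dimensional strata matches the paper's. But the base case $k=0$ of your second argument has a genuine gap in the support hypothesis for the acyclicity criterion. Peskine--Szpiro (which is what the paper invokes, and what your Buchsbaum--Eisenbud variant would have to amount to) needs, beyond the projective-dimension bound $\pdim(C^{n-i})\le i$, that the non-terminal cohomology modules $H^{<n}(C^\bullet_{f,\x})$ have \emph{depth zero} over $\calO_{X,\x}$, i.e.\ are supported only at $\x$ --- a codimension-$n$ condition. Your replacement condition, ``exact off the codimension-$\ge 2$ critical locus of $f_{\red}$,'' is far too weak when $n\ge 4$: Lemma~\ref{lem-phi} supplies exactness only in positions $\le\min(\hight_\x(\phi),n-1)$ and, more to the point, a codimension-$2$ support set for the cohomology does not force depth zero. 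The correct way to get the codimension-$n$ support is the stratification argument: since $\{\x\}$ is a zero-dimensional logarithmic stratum and the strata are locally finite (Saito-holonomicity), all nearby points lie in positive-dimensional strata, and at those the complex is already known to be exact; hence $H^{<n}(C^\bullet_{f,\x})$ is concentrated at $\x$. That is the missing ingredient.

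Getting that ingredient cleanly also exposes a structural problem with inducting on the stratum dimension $k$. To show exactness at the nearby points of positive stratum dimension, you split off $\CC^{k'}$ and land in a germ $(X',f')$ with $\dim X'=n-k'<n$ and a \emph{zero}-dimensional stratum there --- i.e.\ back at $k=0$, just in fewer variables. So the induction that actually terminates is on the ambient dimension $n$ (as the paper does), not on $k$; as written, your $k>0$ step appeals to the $k=0$ case ``by induction'' without a decreasing quantity, and your $k=0$ case in turn needs the conclusion at nearby points, which is not covered by your stated inductive hypothesis. Reorganizing as an induction on $n$, with Lemma~\ref{lem-n<5} as the base, the positive-dimensional strata handled by the splitting plus the tensor structure of Remark~\ref{rmk-C}.\eqref{rmk-C-Koszul} (your spectral-sequence reformulation is fine here), and the zero-dimensional strata handled by tameness plus Peskine--Szpiro with the codimension-$n$ support observation, gives a correct proof.
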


Before we embark on the proof, inspired by
\cite{HerzogSimisVasconcelos}, we collect some helpful facts.

%We consider the logarithmic differentials $\Omega^\bullet(\log f)$ and
%their submodules $\Omega^\bullet_0(\log f)$ from \eqref{Omega0}.
%This complex allows various splittings based
%on contraction against Euler operators,
% $E=\frac{1}{n}\sum_{i=1}^nx_i\del_i$ and linear forms $\sum_{i=1}^n
% a_idx_i$, $a_i\in\CC$;  
%see
%for example \cite{DenhamSchulze12}.  Let $E$ be a global
%Euler-homogeneity on $f$, $E\diamond(\dff)=1$. 
\begin{ntn}
If $\delta$ is a vector field and $\omega$ a differential form on an
open set $U\subseteq X$ then we denote by $\delta\diamond\omega$ the
contraction of $\omega$ along $\delta$. In particular, on the level of
$1$-forms, $\diamond$ denotes the natural pairing between $TX$ and
$T^*X$ with values in the $0$-forms $\calO_X$.
\end{ntn}

Contraction of $\Omega^i_X(\log f)$ by a logarithmic
derivation induces an $\calO_X$-morphism to $\Omega^{i-1}_X(\log f)$.
%, and
%$\Omega^\bullet_X(\log f)$ is closed under the exterior product with
%logarithmic forms. 
In particular, for an Euler field $E_U$ for $f$ on the open set
$U\subseteq X$ and for $\alpha\in\Omega^i_U(\log f)$,
\begin{eqnarray}\label{eqn-decompose-alpha}
\alpha&=&\alpha\wedge\left(
E_U\diamond(\dff)\right)=
\left(E_U\diamond(\alpha\wedge\dff)-(E_U\diamond
\alpha)\wedge\dff\right)\cdot (-1)^{i}.
\end{eqnarray}

%Consider the submodules $\Omega^i_{0}(\log f)$ of $\Omega^i(\log f)$ defined
%as the kernel of the multiplication with the logarithmic form $\dff$,
%\[
%\Omega^i_0(\log f)=\ker\left(\Omega^i(\log
%f)\stackrel{\dff\wedge}{\longrightarrow}\Omega^{i+1}(\log f)\right).
%\] 
%\uli{already defined!}
Note that $\{\omega/f\in\frac{1}{f}\Omega_U^i\mid\omega\wedge\de f=0\}=
\{\omega/f\in\Omega_U^i(\log f)\mid\omega\wedge\de f=0\}$ and
define a sheaf $\Omega^\bullet_{X}(\log_E f)$ by
\[
\Omega^i_{U}(\log_E f)=E_U\diamond\left(\Omega^{i+1}_{U}(\log_0 f)\right),
\]
a submodule of $\Omega^i_X(\log f)$, well-defined for the same reasons
that make $\calL_f$ independent of the choice of the coordinate
system: the Jacobian matrix acts (via exterior powers) on $\Omega^i_U$
and (dually) on $\ydx$.

\medskip

\begin{lem}
Suppose that $f\in\calO_X$ has a global Euler field.  The contraction
$E\colon \Omega^i_X(\log f)\to \Omega^{i-1}_X(\log f)$ is injective on
$\Omega^i_{X}(\log_0 f)$, and
\[
\Omega^\bullet_X(\log
f)=\Omega^\bullet_{X}(\log_0 f)\oplus\Omega^\bullet_{X}(\log_E f)\cong 
\Omega^\bullet_{X}(\log_0 f)\oplus\Omega^{\bullet+1}_{X}(\log_0 f).
\] 
\end{lem}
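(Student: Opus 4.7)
The plan is to derive all three assertions from the master identity \eqref{eqn-decompose-alpha}:
\[
\alpha \;=\; (-1)^{i}\bigl(E\diamond(\alpha\wedge\tfrac{\de f}{f}) \;-\; (E\diamond\alpha)\wedge\tfrac{\de f}{f}\bigr), \qquad \alpha\in\Omega^i_X(\log f),
\]
which is already in hand. The right-hand side visibly expresses $\alpha$ as a sum of two explicit summands, and each of the three statements reduces to identifying the home of each summand, combined with the fact that $f$ is a nonzerodivisor on the reflexive sheaves $\Omega^\bullet_X(\log f)$.

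First I would verify that the two terms on the right land in the expected pieces. The form $\alpha\wedge\tfrac{\de f}{f}$ satisfies $\bigl(\alpha\wedge\tfrac{\de f}{f}\bigr)\wedge\de f=0$, hence sits in $\Omega^{i+1}_X(\log_0 f)$, so the first summand lies in $E\diamond\Omega^{i+1}_X(\log_0 f)=\Omega^i_X(\log_E f)$. The second summand $(-1)^{i+1}(E\diamond\alpha)\wedge\tfrac{\de f}{f}$ is annihilated by $\wedge\de f$ and therefore belongs to $\Omega^i_X(\log_0 f)$. This furnishes $\Omega^i_X(\log f)=\Omega^i_X(\log_0 f)+\Omega^i_X(\log_E f)$.

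Next I would settle injectivity of $E\diamond$ on $\Omega^i_X(\log_0 f)$. If $\alpha\in\Omega^i_X(\log_0 f)$ then $\alpha\wedge\tfrac{\de f}{f}=0$ (using that $f$ is a nonzerodivisor on the torsion-free sheaf $\Omega^{i+1}_X(\log f)$), killing the first summand of the identity; if in addition $E\diamond\alpha=0$ the second vanishes too, forcing $\alpha=0$. For directness, suppose $\alpha_0+E\diamond\beta=0$ with $\alpha_0\in\Omega^i_X(\log_0 f)$ and $\beta\in\Omega^{i+1}_X(\log_0 f)$. Wedging with $\de f$ kills $\alpha_0$, so $(E\diamond\beta)\wedge\de f=0$. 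The Leibniz rule for contraction yields
\[
0 \;=\; E\diamond(\beta\wedge\de f) \;=\; (E\diamond\beta)\wedge\de f + (-1)^{i+1} f\beta,
\]
whence $f\beta=0$, and thus $\beta=0$ and $\alpha_0=0$.

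Finally, the map $E\diamond\colon\Omega^{i+1}_X(\log_0 f)\to\Omega^i_X(\log_E f)$ is surjective by the definition of $\log_E$ and injective by the preceding paragraph, establishing the claimed degree-shifting isomorphism. The main thing to watch is careful sign bookkeeping in the Leibniz identity for contraction; the rest depends only on $f$ being a nonzerodivisor on $\Omega^\bullet_X(\log f)$, which holds because $X$ is a smooth complex manifold.
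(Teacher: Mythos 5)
Your proof is correct and follows essentially the same route as the paper: everything is extracted from the master decomposition identity, checking that the two summands land in $\Omega^\bullet_X(\log_0 f)$ and $\Omega^\bullet_X(\log_E f)$ and that the decomposition is direct. The one minor difference is in the directness step: the paper observes that any $\alpha\in\Omega^i_X(\log_E f)$ automatically has $E\diamond\alpha=0$ (since contraction squares to zero), so $\alpha\in\Omega^i_X(\log_0 f)\cap\Omega^i_X(\log_E f)$ kills both terms of the master identity and hence vanishes; you instead run a Leibniz computation on $\beta\wedge\de f=0$ to extract $f\beta=0$. Both routes are valid, and your Leibniz bookkeeping (sign $(-1)^{\deg\beta}=(-1)^{i+1}$ and $E\diamond\de f=f$) is correct; you also make explicit the isomorphism $E\diamond\colon\Omega^{i+1}_X(\log_0 f)\to\Omega^i_X(\log_E f)$, which the paper leaves implicit.
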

\begin{proof}
The second term in the difference \eqref{eqn-decompose-alpha} of
$\alpha$ is in $\Omega^i_{X}(\log_0 f)$. As $\alpha\wedge\dff$ is in
$\Omega^{i+1}_{X}(\log_0 f)$, the first term of the difference is
in $\Omega^i_{X}(\log_E f)$.  Thus, $\Omega^i_X(\log
f)=\Omega^i_{X}(\log_0 f)+\Omega^i_{X}(\log_E f)$.  If $\alpha\wedge
\dff=0$ and $E\diamond \alpha=0$ for $\alpha\in\Omega^i_X(\log f)$
then \eqref{eqn-decompose-alpha} yields $\alpha=0$. So the sum is
direct.

If $0\not =\alpha\in\Omega^i_{X}(\log_0 f)$, then by
\eqref{eqn-decompose-alpha} $\alpha$ is $(E\diamond \alpha)\wedge\dff$
up to sign, and so surely $E\diamond\alpha$ is nonzero. 
%Now suppose $\alpha=E\cdot\beta$ for $\beta\in\Omega^{i+1}(\log f)$
%with $\alpha\wedge\dff=0$.  Then we compute
%\[
%0=E\cdot(\alpha\wedge\dff)=E\cdot(\alpha)\wedge\dff-\alpha=-\alpha
%\]
%as $E\circ E=0$. Hence, the sum is direct.
\end{proof}

\begin{rmk}\label{rmk-C}
\begin{asparaenum}
\item \label{rmk-C-pdim} If $f$ permits a global Euler field, then
  $C^i_{f,\x}$ is locally a summand of both $\Omega^i_{X,\x}(\log
  f)[y]$ and $\Omega^{i-1}_{X,\x}(\log f)[y]$, and so (cf.~Remark
  \ref{rmk-Cf}, parts \eqref{rmk-Cf-grading} and \eqref{rmk-Cf-syz})
\[
\pdim_{\calO_{X,\x}[y]}(C^i_{f,\x})\le\min\{\pdim_{\calO_{X,\x}}
\Omega^i_{X,\x}(\log f),\pdim_{\calO_{X,\x}} \Omega^{i-1}_{X,\x}(\log f)\}\le n-2.
\] 
\item The complexes $C^\bullet_f$ and $C^\bullet_{f^k}$ are isomorphic
  up to a shift by $kf^{k-1}$.
\item \label{rmk-C-Koszul}
  If $X=\CC\times X'$ and $f$ does not depend on $x_1$ then
  $C^\bullet_f$ is locally isomorphic to the Koszul cocomplex of $y_1$
  on $\calO_{X}[y_1]$ tensored with the \logarithmic complex
  $C^\bullet_{f_{|_{X'}}}$ to the restriction of $f$ to $X'$.
\item
In a smooth point $\x$ of the reduced hypersurface $\Var(f_\red)$, %we
                                                                   %can
                                                                   %assume
                                                                   %that
                                                                   %$f_n$
                                                                   %is
                                                                   %nonzero
%and 
a suitable analytic coordinate change arranges that $f={x_n'}^k$.
%This
%coordinate change affects both the basis of the Koszul complex
%$K^\bullet$ and the cotangent varables $y$.  
In new coordinates, $\de f$ is $k\cdot {x_n'}^{k-1} \de x_n'$ and so
$C^\bullet_{f,\x}$ is (essentially) the Koszul complex in the
indeterminates $y_1',\ldots,y_{n-1}'$ on $\calO_{X,\x}[y]$. In
particular, $C^\bullet_{f,\x}$  is a resolution in $\x$. 
\item If $f$ is Euler-homogeneous the previous item shows that
  $C^\bullet_f$ is exact outside the singular locus of the
  hypersurface $f_\red$. 
\item Over a domain, the wedge product of two $1$-forms is zero if and
  only if they are proportional. Thus, $C^0_f=0$ and $C^1_f=
  \ker(\frac{1}{f}\Omega^1_X[y]\stackrel{\dff\wedge}{\to}\frac{1}{f}
  \Omega^2_X[y])$ is the free module $\calO_X[y]\cdot\frac{f_\red\de
    f}{f^2}$. As $\ydx$ and $\dff$ are generically independent,
  $\ydx\colon C^1_f\to C^2_f$ is injective.  
\schluss
\end{asparaenum}
\end{rmk}

\begin{ntn}
Let $\x\in X$. 
For the rest of this section, put $R=\calO_{X,\x}$ and
$S=\gr_{(0,1)}(\calD_{X,\x})\cong R[y]$,
a standard graded polynomial ring over a Noetherian regular
local ring. 

%We also abbreviate for the rest of this section $S\otimes_R
%\Omega^i_0(\log f)$ by $\Omega^i_0$ and $\Omega^i(\log f)$ by
%$\Omega^i$.
\end{ntn}

%\begin{lem}\label{jac-gens}
%If $f$ is strongly Euler-homogeneous, then for any Whitney stratum
%$\sigma$ of $\Var(f)$ one has that $n-\dim(\sigma)$ bounds the number
%of generators of $\Jac(f)$ for all $\x\in \sigma$.\uli{not sure this lemma
%  is needed.}
%\end{lem}
%\begin{proof}
%This follows directly from Whitney's decomposition results: near
%$\xi\in\sigma$, there is a diffeomorphism between a ball around $\x$
%and $\CC^{\dim(\sigma)}\times \CC^{n-\dim(\sigma)}$ such that the pullback of $%f$
%is constant on the first factor. 
%\end{proof}

We shall make use of the Acyclicity Lemma of Peskine--Szpiro from
\cite{PeskineSzpiro}:
\begin{thm}
Let $0\to \Lambda^0\to \cdots\to\Lambda^t$ be a complex in the
category of finitely generated modules over the Noetherian ring $A$
such that $\pdim(\Lambda^i)\le i$, and such that the non-vanishing
cohomology modules $H^i(\Lambda^\bullet)$ have depth zero for
$i<t$. If $t\le\depth(A)$ then the complex is a resolution of its
terminal cohomology group.\qed
\end{thm}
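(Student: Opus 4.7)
I plan to deduce the result by reducing to the local case and combining the Auslander--Buchsbaum formula with a depth-chase along the short exact sequences built from the complex. First, I would reduce to the case that $A$ is a Noetherian local ring: vanishing of $H^i(\Lambda^\bullet)$ can be checked after localization at every maximal ideal $\frakm$, and the bound $\depth(A_\frakm)\ge t$ is inherited from $\depth(A)\ge t$. In that setting, Auslander--Buchsbaum translates $\pdim(\Lambda^i)\le i$ into
\[
\depth(\Lambda^i)\ge \depth(A)-\pdim(\Lambda^i)\ge t-i,
\]
which is the form of the hypothesis I will actually use.

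Next, I would prove $H^i(\Lambda^\bullet)=0$ for $0\le i<t$ by forward induction on $i$. The base case $i=0$ is immediate: since $\pdim(\Lambda^0)\le 0$, the module $\Lambda^0$ is free, hence has depth $\ge t\ge 1$, and any nonzero submodule inherits depth $\ge 1$. In particular $H^0=\ker(d^0)\subseteq \Lambda^0$, if nonzero, has positive depth, contradicting the hypothesis that nonvanishing $H^i$ have depth zero. For the inductive step, I assume $H^j=0$ for all $j<i$ with $1\le i<t$ and suppose for contradiction that $H^i\ne 0$, so $\depth(H^i)=0$. The inductive assumption makes the truncated sequence
\[
0\to \Lambda^0\to \cdots\to \Lambda^{i-1}\to \Lambda^i\to N^i\to 0
\]
exact, where $N^i:=\Lambda^i/\image(d^{i-1})$; standard dimension-shifting through this resolution, using $\pdim(\Lambda^j)\le j$, gives $\pdim(N^i)\le i$ and hence $\depth(N^i)\ge t-i$. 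Combining the canonical short exact sequence $0\to H^i\to N^i\to \image(d^i)\to 0$ with the depth lemma yields
\[
\depth(H^i)\ge \min\bigl(\depth(N^i),\ \depth(\image(d^i))+1\bigr),
\]
and once I control $\depth(\image(d^i))\ge t-i-1$, this forces $\depth(H^i)\ge t-i\ge 1$, contradicting $\depth(H^i)=0$ and completing the induction.

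The main obstacle is exactly this bound $\depth(\image(d^i))\ge t-i-1$. My plan to secure it is a nested descending sub-induction, tracking both $\depth(\ker(d^j))$ and $\depth(\image(d^j))$ for $j>i$ via the two families of short exact sequences
\[
0\to \ker(d^j)\to \Lambda^j\to \image(d^j)\to 0,\qquad 0\to \image(d^{j-1})\to \ker(d^j)\to H^j\to 0,
\]
feeding in the already-established lower bounds $\depth(\Lambda^j)\ge t-j$; the terminal cohomology $H^t$ enters only through the cokernel of $d^{t-1}$ and its homological data can be bypassed. An alternative and potentially slicker route is to choose, by prime avoidance, an element $x\in\frakm$ regular on each $\Lambda^i$ with $i<t$ (possible since those have positive depth), pass to the quotient complex $\Lambda^\bullet\otimes_A A/xA$ over $A/xA$, which has depth $t-1$, verify via the long exact sequence of cohomology that the depth-zero condition is preserved wherever the new cohomology remains nonvanishing, and then invoke the inductive hypothesis on $t$. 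Either route concentrates the real work into careful depth bookkeeping along the short exact sequences distilled from $\Lambda^\bullet$.
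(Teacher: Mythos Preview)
The paper does not prove this theorem: it is quoted as the Acyclicity Lemma of Peskine--Szpiro with a citation and a \qed, and is then used as a black box in the proof of Theorem~3.6. So there is no paper-proof to compare against; I will just comment on the soundness of your sketch.

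Your reduction to the local case and the forward induction on $i$ are exactly the standard argument, and your computation $\pdim(N^i)\le i$, hence $\depth(N^i)\ge t-i\ge 1$, is correct. The problem is that you then manufacture an obstacle that is not there. In the short exact sequence $0\to H^i\to N^i\to \image(d^i)\to 0$, the depth lemma gives
\[
\depth(H^i)\ \ge\ \min\bigl(\depth(N^i),\ \depth(\image(d^i))+1\bigr),
\]
and since $\depth(\image(d^i))\ge 0$ is automatic (with $\depth(0)=\infty$), this already yields $\depth(H^i)\ge \min(t-i,1)=1$, contradicting $\depth(H^i)=0$. Equivalently: $H^i$ is a nonzero submodule of $N^i$, and any $\frakm$-regular element on $N^i$ is regular on $H^i$. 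You do \emph{not} need the stronger bound $\depth(\image(d^i))\ge t-i-1$, and you do not need any information from the part of the complex to the right of position $i$.

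It is worth noting that the ``nested descending sub-induction'' you propose for that stronger bound would not work as stated: for $j>i$ the modules $H^j$ may be nonzero of depth zero, and in the sequence $0\to \image(d^{j-1})\to \ker(d^j)\to H^j\to 0$ this depth-zero term blocks any useful lower bound on $\depth(\image(d^{j-1}))$ from propagating down. The $x$-reduction alternative has similar pitfalls (you cannot choose $x$ regular on the $H^j$, so the cohomology of the reduced complex is not easily controlled). Fortunately, as above, neither route is needed.
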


\begin{lem}\label{lem-n<5}
The \logarithmic complex is a reflexive resolution of $\calL_f$ if $n\le 3$.
\end{lem}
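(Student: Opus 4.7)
My plan is to derive the lemma directly from Lemma~\ref{lem-phi} via an absorption trick that exploits the UFD property of the local rings $\calO_{X,\x}$. First I will observe that $C^\bullet_f$, after multiplying through by $f$, coincides with the complex $C^\bullet_{\de f}$ of Lemma~\ref{lem-phi}, where $\de f=(f_1,\ldots,f_n)$ is read as the tuple of partial derivatives of $f$ in any local coordinate system; the map $\omega/f\mapsto\omega$ is compatible with $\ydx\wedge$ and so provides an isomorphism of complexes.

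Next, working stalk-locally at a point $\x\in X$, I will use that $\calO_{X,\x}$ is a regular local ring, hence a UFD, so that a greatest common divisor $g\in\calO_{X,\x}$ of the components $f_1,\ldots,f_n$ exists. Writing $\de f=g\cdot\phi'$ with $\phi'=(\phi'_1,\ldots,\phi'_n)$ whose components have trivial gcd, the first part of Lemma~\ref{lem-phi} identifies $C^\bullet_{\de f,\x}$ with $C^\bullet_{\phi',\x}$. Coprimality of the $\phi'_i$ then forces $\hight_\x(\phi')\ge 2$: any height-one prime of $\calO_{X,\x}$ is principal, and its irreducible generator cannot divide all of the $\phi'_i$ simultaneously without contradicting coprimality. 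Hence $V(\phi')$ has codimension at least two at $\x$.

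Invoking the second part of Lemma~\ref{lem-phi} for $\phi=\phi'$ now yields exactness of $C^\bullet_{\phi',\x}$ at positions $i\le\min(\hight_\x(\phi'),n-1)$. For $n\le 3$ this bound equals $n-1$, so $C^\bullet_f$ is exact at every non-terminal position. The terminal cohomology at $C^n_f$ is identified with $\calO_X[y]/\calL_f$ by Remark~\ref{rmk-Cf}(\ref{rmk-Cf-fields-diffs}), and the modules $C^i_f$ are reflexive by Remark~\ref{rmk-Cf}(\ref{rmk-Cf-syz}); together this exhibits $C^\bullet_f$ as a reflexive resolution of $\calL_f$.

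The main subtle step is the height estimate: after absorbing the gcd, coprimality is precisely what delivers $\hight\ge 2$, and in dimension $n\le 3$ this matches the threshold $n-1$ of Lemma~\ref{lem-phi}. For larger $n$, coprimality alone no longer reaches $n-1$, which is why the general Theorem~\ref{thm-C-exact} invokes the additional hypotheses of tameness, strong Euler-homogeneity, and Saito-holonomicity to bound projective dimensions and run an Acyclicity Lemma argument on the shifted complex.
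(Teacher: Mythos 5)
Your argument is correct and follows essentially the same route as the paper's: both proofs factor out the gcd $g=\gcd(f_1,\ldots,f_n)$ so that $C^\bullet_f$ is identified, by the first part of Lemma~\ref{lem-phi}, with $C^\bullet_{\phi'}$ for the coprime tuple $\phi'$, and both then invoke the second part of Lemma~\ref{lem-phi} with the observation that in a regular local ring (a UFD) coprimality forces $\hight_\x(\phi')\ge 2$, which for $n\le 3$ meets the $n-1$ threshold. The only cosmetic difference is that the paper splits off the subcase where $\phi'$ generates the unit ideal (reducing $C^\bullet_{\phi'}$ to a Koszul cocomplex by a coordinate change) rather than folding it into the height bound by convention, and dispenses with $n\le 2$ directly via Remark~\ref{rmk-C}; you instead carry the uniform height argument through all $n\le 3$, which is perfectly fine provided one adopts the usual convention that the unit ideal has arbitrarily large height so that Lemma~\ref{lem-phi} applies vacuously there.
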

\begin{proof}
%It suffices to argue in the stalk at $\x\in X$.  Set
%$f_\nil=f/f_\red$; in dimension $2$ one checks that locally
%$C^\bullet$ is isomorphic to $(S\to S)$ with the map being
%multiplication by $(y_1f_2-y_2f_1)/f_\nil$.
If $n<3$ then the claim follows from Remark \ref{rmk-C}.

Let $f_\nil=\gcd(f_1,\ldots,f_n)$ and
$\phi=\{f_1/f_\nil,\ldots,f_n/f_\nil\}$, recalling that $f_i$ denotes
$\del_i\bullet(f)$.  Then $C^\bullet_f$ and $C^\bullet_\phi$ are equal up to
a twist.
%$\de
%f/F\in\Omega^\bullet_X$ and $\dff$ have the same kernels on
%$K^\bullet$.
By construction, in each stalk the ideal generated by $\phi$ is either
the unit ideal or has height at least $2$.

If $\phi$ generates the unit ideal at $\x\in X$, then a base change
produces $\phi=(1,0,\ldots,0)$, so that $C^\bullet_\phi$ is
essentially the Koszul cocomplex on $y_2,\ldots,y_n$.  If, on the
other hand, the ideal generated by $\phi$ has height at least $2$ then
$H^i(C^\bullet_f)=0$ for $i\le 2$ by Lemma \ref{lem-phi}.
%In dimension $4$, argue in the stalk at $\x\in X$. The Acyclicity
%Lemma assures that the locus supporting cohomology of
%$C^\bullet_{f,\x}$ cannot be zero-dimensional.  Since the singular
%locus of the underlying reduced divisor is at most two-dimensional,
%the support of any cohomology class can be only one- or
%two-dimensional. Localizing at a height three prime of $R$ leads to a
%complex to which the Acyclicity Lemma applies.  For supporting primes
%$\frakp\subseteq R$ of height two, consider the graded components of
%the complex $(0\to C^2_{f,\x}/\image C^1_{f,\x}\to C^3_{f,\x}\to
%C^4_{f,\x})\otimes_RR_\frakp$.  As $(C^1_{f,\x})\otimes_RR_\frakp$ is
%$R_\frakp$-free, $\depth((C^2_{f,\x}/\image
%C^1_{f,\x})\otimes_RR_\frakp)\geq \min(
%4-1,\depth(\Omega^2_{X,\x}(\log_0 f)))$ and again the Acyclicity Lemma
%applies.
\end{proof}

In higher dimension, 
%the exact same methods allow to show that
%$C^\bullet_f$ has no cohomology at cohomological positions $3$ or
%less. S
strongly Euler-homogeneous Saito-holonomic divisors permit a general
method of arguing that we use to prove Theorem \ref{thm-C-exact}.

%\begin[of Theorem \ref{thm-C-exact}]{proof}
\begin{proof}[Proof of Theorem \ref{thm-C-exact}]
%Suppose exactness of the logarithmic approximation complex has already
%been shown in smaller dimension under corresponding hypotheses.  

%\begin{lem}\label{lem-Whitney-C}
%Suppose that exactness of the augmented complex $C^\bullet_f\to
%\calO_X[y]/\calL_f$ has already been proved in dimension less than $n$
%for all divisors that are Saito-holonomic and strongly
%Euler-homogeneous. Then for Saito-holonomic, strongly
%Euler-homogeneous divisors in dimension $n$ the complex $C^\bullet_f$
%is a resolution of $\calO_X[y]/\calL_f$ outside a zero-dimensional discrete
%subset of $X$.\qed
%\end{lem}
%\begin{proof}
We shall proceed by induction on $n$.
Because of Lemma \ref{lem-n<5} we can assume that $n\geq 4$, and that
$f$ is tame, strongly Euler-homogeneous and Saito-holonomic.

Outside $\Var(f)$, $C^\bullet_f$ is exact by Remark \ref{rmk-C}.
Let $\x\in X$ be in a positive-dimensional logarithmic
stratum $\sigma$ of $\Var(f)$. 
% If $f$ is Saito-holonomic near
%$\x\in\sigma$, the logarithmic vector fields near $\x$ induce a
%foliation and
By Remarks \ref{rmk-Whitney-diffeo-product} and
\ref{rmk-Ehom}.\eqref{rmk-splitting-strE}, there is an analytic
coordinate change $x\leadsto x'$ near $\x$ transforming a neighborhood
of $\x$ into $\CC^{\dim\sigma}\times \CC^{n-\dim\sigma}$ such that $f$
is constant on the first factor. Let $f'$ be the restriction of $f$ to
the second factor in the new coordinates, and denote by $y\leadsto y'$
the corresponding coordinate change in the cotangent variables. Then
by Remark~\ref{rmk-C}.\eqref{rmk-C-Koszul}, near $\x$ the complex
$C^\bullet_f$ is (relative to the induced
$\gr_{(0,1)}(\calD_X)$-isomorphism from Remark~\ref{rmk-Lf2})
isomorphic to the Koszul complex in $y'_1,\ldots,y'_{\dim\sigma}$,
tensored with the \logarithmic complex of $f'$. Tameness (since
$\Omega^\bullet_X(\log f)=\Omega^\bullet_{X'}(\log f')\otimes_\CC
\Omega^\bullet_{\CC^{\dim(\sigma)}}$), strong Euler-homogeneity (Remark
\ref{rmk-Ehom}), and Saito holonomicity (from the definition) are
inherited from $f$ to $f'$.  Since $f'$ uses fewer than $n$
coordinates, the inductive hypothesis assures that the \logarithmic
complex of $f'$ is a reflexive resolution of the residue ring of
$\calL_{f'}$. Since $C^\bullet_{f'}$ is $y'_i$-torsion-free for
$i\le\dim\sigma$, $C^\bullet_f$ is a reflexive resolution of $\calL_f$
at $\x$.

It follows that $H^{<n}(C^\bullet_f)$ is supported at the
zero-dimensional strata of the logarithmic stratification of $f$.
Then, for all $j$,  the graded components $(C^\bullet_{f,\x})_j$ are complexes of
finitely generated $\calO_{X,\x}$-modules whose $i$-th cohomology is
zero-dimensional for $i<n$.  Since $f$ is tame,
$\pdim_R((C^{n-i}_{f,\x})_j)\le i$ for all $i$ by Remarks
\ref{rmk-C}.\eqref{rmk-C-pdim} and \ref{rmk-Cf}.\eqref{rmk-Cf-grading}
and so $C^\bullet_f$ is, by the
Acyclicity Lemma, a resolution of $\calO_{X,\x}/\calL_f$ everywhere.
\end{proof}

\subsection{Cohen--Macaulayness}
%%%%%%%%%%%%%%%%%%%%%%%%%%%%%%%%%%%

\begin{thm}\label{thm-Lf-CM}
  Let $X$ be a complex manifold, and let $Y$ be a strongly
  Euler-homogeneous and Saito-holonomic divisor cut out by the global
  equation $f\in\calO_X$.  Consider the following conditions:
\begin{enumerate}
\item $f$ is tame: $\pdim_{\calO_X}\Omega^i_X(\log Y)\le i$ for
  all $i\geq 0$.
\item The quotient $\calO_X[y]/\calL_Y$ is Cohen--Macaulay.
\end{enumerate}
Then (1) implies (2) in any case, and if the \logarithmic complex
$C^\bullet_f$ is exact then (2) implies (1).
\end{thm}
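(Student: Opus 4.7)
My plan is to prove (1) $\Rightarrow$ (2) by an Auslander--Buchsbaum / total-complex argument using Theorem~\ref{thm-C-exact}, and to prove (2) $\Rightarrow$ (1) under the exactness hypothesis by running the same numerics in reverse via a duality spectral sequence. Throughout, set $R = \calO_{X,\x}$, $S = R[y]$, $N = S/\calL_{f,\x}$, and $e_j := \pdim_S C^j_{f,\x}$.

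For (1) $\Rightarrow$ (2): Theorem~\ref{thm-C-exact} makes $C^\bullet_f$ a resolution of $N$. Under strong Euler-homogeneity, the splitting $\Omega^i_X(\log f) \cong \Omega^i_X(\log_0 f) \oplus \Omega^{i+1}_X(\log_0 f)$ translates tameness ($\pdim_R \Omega^i_X(\log f) \le i$) into $\pdim_R \Omega^j_X(\log_0 f) \le j-1$ for $j \ge 1$; by Remark~\ref{rmk-Cf}.\eqref{rmk-Cf-grading} we then have $e_j \le j-1$. Replacing each $C^j_f$ by a projective $S$-resolution of length $e_j$ produces a double complex whose total complex is a projective resolution of $N$ living in cohomological positions $j-k \ge 1$ and hence of length $\le n-1$. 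Localizing at the maximal homogeneous ideal of $S$ (where $S$ is regular of dimension $2n$), Auslander--Buchsbaum gives $\depth N \ge n+1$. Since $\Der_X(-\log_0 f)$ has generic rank $n-1$, we have $\hight \calL_f \le n-1$ and thus $\dim N \ge n+1$, so $\depth \le \dim$ forces equality and Cohen--Macaulayness.

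For (2) $\Rightarrow$ (1) with $C^\bullet_f$ exact: Cohen--Macaulayness yields $\pdim_S N = n-1$. Since $C^\bullet_f \simeq N[-n]$ in $D^b(S)$,
\[
R\Hom_S(C^\bullet_f, S) \simeq R\Hom_S(N, S)[n]
\]
has cohomology concentrated in degree $-1$ (equal to $\omega_N = \Ext^{n-1}_S(N,S)$). The Cartan--Eilenberg spectral sequence $E_1^{-p,q} = \Ext^q_S(C^p_f, S) \Rightarrow H^{-p+q}(R\Hom_S(C^\bullet_f, S))$ then forces all entries at total degree $-p+q \ne -1$ to die. Choose $p^*$ maximizing $e_p - p$, breaking ties with largest $p$; if $e_{p^*} > p^*-1$ then the entry $\Ext^{e_{p^*}}_S(C^{p^*}_f, S)$ at total degree $e_{p^*} - p^* > -1$ admits only outgoing differentials landing outside the support of $E_1$ (since $p^*$ maximizes) and incoming differentials whose sources lie strictly below this total degree (by the tiebreak on $p^*$), so it must survive to $E_\infty$ at the wrong total degree, a contradiction. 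Hence $e_j \le j-1$ for every $j$, and the Euler splitting yields $\pdim_R \Omega^i_X(\log f) \le i$, i.e., tameness. The main obstacle is the spectral-sequence bookkeeping in (2) $\Rightarrow$ (1): rigorously verifying that neither outgoing nor incoming differentials on any page can kill the ``northeast-most'' entry---this hinges on the Koszul-like shape of $C^\bullet_f$ (differential $\ydx \wedge$, $C^1_f$ free, increasing $y$-grading) and on the carefully chosen maximality of $p^*$.
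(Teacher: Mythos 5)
Your plan for $(1)\Rightarrow(2)$ follows the paper's total-complex strategy, and your length estimate $e_j\le j-1$ is correct. But the concluding sentence is a non sequitur: from $\depth N\ge n+1$, $\dim N\ge n+1$, and $\depth N\le\dim N$ one cannot extract $\depth N=\dim N$. Cohen--Macaulayness requires the \emph{upper} bound $\dim N\le n+1$, equivalently $\hight\calL_{f,\x}\ge n-1$. The height inequality you actually record, $\hight\calL_f\le n-1$, points the wrong way. The missing upper bound is where Saito-holonomicity does its work: along a stratum $\sigma$, the evaluation of $\Der_X(-\log Y)$ spans $T\sigma$, and since the strong Euler field vanishes at each $\x\in\sigma\cap\Var(f)$ the evaluation of $\Der_X(-\log_0 f)$ already has rank $\ge\dim\sigma-1$ there; hence the fiber of $\Var(\calL_f)$ over $\x$ has dimension at most $n-\dim\sigma+1$, and summing over the (locally finite) stratification gives $\dim\Var(\calL_f)\le n+1$. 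Without an argument of this sort your chain of inequalities does not close.

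For $(2)\Rightarrow(1)$ you take a genuinely different route via the duality spectral sequence, but the gap you flag is real and not repairable by the tiebreak alone. A potential $E_r$-source for an incoming differential into $(-p^*,e_{p^*})$ sits at $(-(p^*+r),\,e_{p^*}+r-1)$; your tiebreak gives $e_{p^*+r}-(p^*+r)\le d^*-1$, but the boundary case $e_{p^*+r}=e_{p^*}+r-1$ is admissible, and at that index $\Ext^{e_{p^*+r}}_S(C^{p^*+r},S)$ is the top nonvanishing $\Ext$, hence nonzero. Nothing in the vague appeal to ``Koszul-like shape'' prevents such a differential from being an isomorphism onto your target, so the contradiction does not materialize.

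What both of your arguments miss is the one observation that the paper leans on and that removes all the bookkeeping: the total complex of $F^\bullet_\bullet$ is a \emph{minimal} free $S$-resolution of $N$, because $C^\bullet_f$ is $y$-graded over the local ring $R$ and the Liouville differential $\ydx\wedge$ has positive $y$-degree, forcing every entry of every block of the total differential into the maximal ideal of $S=R[y]$. Minimality turns the length count into an \emph{equality} $\pdim_S N=\text{(length of total complex)}$. Then $(1)\Rightarrow(2)$ gives $\pdim_S N=n-1$ directly (the localization at the generic prime of height $n-1$ gives the lower bound), and $(2)\Rightarrow(1)$ is immediate: if $f$ is not tame, some column $F^j_\bullet$ has length $>j-1$, the total complex acquires a free term in cohomological position $\le 0$ and hence has length $\ge n$, so $\pdim_S N\ge n>n-1\ge\hight\calL_f$; since a Cohen--Macaulay cyclic module over a regular local ring satisfies $\pdim=\hight$, this contradicts (2). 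No spectral sequence, no page-chasing.
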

\begin{proof}
For strongly Euler-homogeneous divisors, homological
properties of $\calL_f$ depend only on the divisor and not its
defining equation (Remarks \ref{rmk-Der0}, \ref{rmk-Lf2}). 
Tameness, Cohen--Macaulayness and exactness of the \logarithmic 
complex are all local properties. We may hence calculate in the ring
$\calO_{X,\x}[y]$. So, assume that on the open affine set $U\ni\x$ there is a
global section $f$ defining $Y$ that permits a strong Euler field. 
%assume that $X=\CC^n$,
%and that $f$ has a global Euler-homogeneity. 

  Since generically $\calL_f$ is a prime of height $n-1$,
  Cohen--Macaulayness is equivalent to $\pdim(\calL_f)=n-1$.  Picture
  $C^\bullet_{f,\x}\cong (\Omega^\bullet_{X,\x}(\log_0 f)[y],\ydx)$ as a row complex.
  Let $F_\bullet^i$ be a minimal graded free
  $\calO_{X,\x}[y]$-resolution (a column oriented downwards) of $C^i_{f,\x}$,
  and let $F^\bullet_\bullet$ be a $y$-graded double complex that results from
  lifting the maps $\ydx\colon C^i_{f,\x}\to C^{i+1}_{f,\x}$. The ring
  $\calO_{X,\x}$ is regular, the complex $C^\bullet_f$ is $y$-graded, and
  $\ydx $ has positive degree. Hence, the total complex to
  $F^\bullet_\bullet$ is minimal. As this total complex resolves
  $C^\bullet_{f,\x}$ under the given hypotheses, by Theorem~\ref{thm-C-exact}
  it minimally resolves $\calO_{X,\x}[y]/\calL_f$, positioned in
  cohomological degree $n$.

  Suppose first that $f$ is tame. Since $\Omega^i_{X,\x}(\log f)\cong
  \Omega^i_{X,\x}(\log_0 f)\oplus \Omega^{i+1}_{X,\x}(\log_0 f)$,
  tameness implies that the length of $F^{i+1}_\bullet$ is at most
  $\pdim_{\calO_{X,\x}}(\Omega^{i+1}_{X,\x}(\log_0
  f))\le\pdim_{\calO_{X,\x}}(\Omega^i_{X,\x}(\log f))\le i$.  Thus, the total complex
  of $F^\bullet_\bullet$ is of length at most $n-1$ and hence
  $\calO_{X,\x}[y]/\calL_f$ is Cohen--Macaulay.

  Conversely, suppose that $f$ is not tame but $C^\bullet_{f,\x}$ is exact.
  Then  the total complex of $F^\bullet_\bullet$ is minimal and free,
  resolves $\calO_{X,\x}/\calL_f$ but has length greater than $n-1$. So $\calL_f$
  cannot be a Cohen--Macaulay ideal at $\x$. 
% Then let $j$ be the smallest index for which $\pdim \Omega^{i+1}(\log
%  f)$ exceeds $i+1$. Thus, $j+1$ is the first index for which
%  $p_{i+1}:=\pdim C^{i+1}\geq i+1$. Consider the spectral sequence
%  $E^{p,q}_r$ resulting from $E^{p,q}_0=\Hom_{R[y]}(F^p_q,R[y])$
%  (whose differentials point up and towards the left) by first taking
%  column homology. By our choices,
%  $E^{j+1,p_{j+1}}_1=\Ext^{p_{j+1}}_R(\Omega^{j+1}_0(\log
%  f),R)\otimes_RR[y]$. By construction, all elements of this module
%  are in the kernel of all further differentials of the spectral
%  sequence. As the horizontal differential is of positive $y$-degree,
%  the same applies to all higher differentials in the spectral%
%  sequence and $E^{j+1,p_{j+1}}_{>1}$ surjects onto
%  $\Ext^{p_{j+1}}_R(\Omega^{j+1}_0(\log f),R)$ and in particular is
%  nonzero. It follows that
%  $\Ext^{p_{j+1}+n-(j+1)}_{R[y]}(R[y]/\calL_f,R[y])$ is nonzero. As
%  $p_{j+1}+n-(j+1)\geq j+1+n-(j+1)=n$, $\calL_f$ cannot be
%  Cohen--Macaulay.
\end{proof}

\subsection{Integrality}
%%%%%%%%%%%%%%%%%%%%%%%%%

In general, one has inclusions of $\calO_{X,\x}[y]$-ideals
\begin{eqnarray}\label{eq-3ideals}
\gr_{(0,1)}(\ann_{\calD_{X,\x}} f^s)\supseteq 
\gr_{(0,1)}(\calD_{X,\x}\cdot\Der_{X}(-\log_0 f))\supseteq (\calL_f)_\x.
\end{eqnarray}

In a smooth point of $f_\red$, $\gr_{(0,1)}(\ann_{\calD_X} f^s)$ is
prime of height $n-1$. Hence, wherever $\calL_f$ locally contains a
prime ideal of height $n-1$, all three ideals are identical.  This
makes it very useful to know criteria that assure that $\calL_f$ is
prime, compare Theorem \ref{thm-tame-annfs} below.

For example, if in local coordinates $f\in\calO_U$ can be written as
$f=u\prod_{i=1}^n x_i^{\alpha_i}$ where $u$ is a unit and $\alpha_i>0$
exactly when $1\le i\le k$ then $\Der_U(-\log f)$ is the locally free module
generated locally by the vector fields
$\{(\alpha_j+\frac{x_j\del_j\bullet(u)}{u})x_1\del_1-
(\alpha_1+\frac{x_1\del_1\bullet(u)}{u})x_j\del_j\}_{2\le j\le n}$.
% and
% $\{\del_i\bullet(u)x_1\del_1-(\alpha_1u+x_1\del_1\bullet(u))\del_i\}_{i>k}$. 
In
particular, $\calL_f$ is prime on $U$ and the previous paragraph
applies: over all normal crossing points of $f$, all three ideals in
\eqref{eq-3ideals} are one and the same
%If $f$ is itself a unit, so $k=0$ above, one of the
%$\del_i(f)$ must also be a unit (from Euler-homogeneity) and so $f$ is
%killed by $\{\del_j-\frac{f_j}{f_i}\del_j\}_{i\neq j}$ whence $\calL_f$ is
%again prime.
complete intersection prime ideal of height $n-1$. The same argument
has been made more generally wherever $f$ is free and
quasi-homogeneous in
\cite{CalderonNarvaez-theModuleDfs}.
%If $f$
%has, in addition, a global homogeneity on $U$ this implies that $\calL_f$
%is a complete intersection prime ideal on $U$.
%One easily sees that away from the singularities of $f$ these
%containments are equalities and $\gr^F(\ann_D f^s)$ is a prime ideal
%of height $n-1$. 
%Indeed, if in that
%situation $L(f)$ were contained strictly in the leftmost ideal then
%the height of $\gr^F(\ann_D (f^s))$ would exceed that of $\calL_f$, which
%is impossible since they agree generically.

%The extreme case of a locally strongly Euler-homogeneous divisor is a
%simple normal crossing divisor, which has no self-intersections and in
%If $f$ is itself a unit, so $k=0$ above, one of the
%$\del_i(f)$ must also be a unit (from Euler-homogeneity) and so $f$ is
%killed by $\{\del_j-\frac{f_j}{f_i}\del_j\}_{i\neq j}$ whence $\calL_f$ is
%again prime.
%Hence,
%$\gr_{(0,1)}(\ann_D f^s)$ is locally a complete intersection prime
%ideal of height $n-1$ over all normal crossing points of $f$.  
%If $f$
%has, in addition, a global homogeneity on $U$ this implies that $\calL_f$
%is a complete intersection prime ideal on $U$.

\begin{thm}\label{thm-Lf-prime}
  Let $f\in\calO_X$ be strongly Euler-homogeneous, Saito-holonomic,
  and tame.  Then $\gr_{(0,1)}(\calD_X)/\calL_f$ is an
  integral domain.
\end{thm}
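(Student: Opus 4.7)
The claim is local on $X$; I fix $\x\in X$ and aim to prove primeness of $\calL_f$ on a sufficiently small neighborhood $U$ of $\x$, by combining Cohen--Macaulayness, generic primeness, and an induction on $n=\dim X$ along the logarithmic stratification.

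The first ingredient is Theorem~\ref{thm-Lf-CM}: tameness forces $\calO_U[y]/\calL_f$ to be Cohen--Macaulay, so every associated prime is minimal of height $n-1$ and each component of $V(\calL_f)$ has dimension $n+1$. For generic primeness, at a smooth point of $\Var(f_\red)$ local analytic coordinates bring $f$ to ${x'_n}^k$ and give $\calL_f=(y'_1,\ldots,y'_{n-1})$, a complete-intersection prime. Combining Serre's $R_0$ at generic points with $S_1$ from the Cohen--Macaulay property, I would conclude that $\calL_f$ is radical, leaving only the uniqueness of minimal primes to verify.

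I would carry out that uniqueness by induction on $n$, the cases $n\le 1$ being trivial (where $\calL_f=0$). For $\x$ in a positive-dimensional logarithmic stratum $\sigma$, combining Remark~\ref{rmk-Whitney-diffeo-product}.(\ref{rmk-Whitney-splitting}) with Remark~\ref{rmk-Ehom}.(\ref{rmk-splitting-strE}) produces an analytic splitting $(X,\Var f)\cong \CC^{\dim\sigma}\times(X',\Var f')$ in which $f'$ inherits tameness, strong Euler-homogeneity and Saito-holonomicity on a smaller-dimensional manifold. By Remark~\ref{rmk-C}.(\ref{rmk-C-Koszul}), $\calL_f$ is locally generated by $\calL_{f'}$ together with the cotangent variables along the $\CC^{\dim\sigma}$-factor; the inductive hypothesis then says $\calL_{f'}$ is prime, and adjoining the extra variables preserves primeness since $\calO_U[y]/\calL_f$ becomes a polynomial extension of a domain.

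The \emph{main obstacle} is the remaining case of an isolated (zero-dimensional) logarithmic stratum at $\x$, where no reduction of $n$ is available. I would then use a dimension count on a small neighborhood $U$ of $\x$: the previous paragraph shows $\calL_f$ is already prime on $U\setminus\{\x\}$, so $V(\calL_f)|_{U\setminus\{\x\}}$ is irreducible of dimension $n+1$. Any additional irreducible component of $V(\calL_f)$ on $U$ must therefore be supported entirely over $\x$, hence sits inside the fiber $\{\x\}\times\AA^n$ and has dimension at most $n$. But Cohen--Macaulayness forces every component of $V(\calL_f)$ to have dimension $n+1$, a contradiction. Hence $V(\calL_f)$ is irreducible on $U$; combined with radicality this yields primeness of $\calL_f$ at $\x$ and completes the proof.
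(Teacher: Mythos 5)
Your proof follows essentially the same strategy as the paper's: reduce via the local product structure along positive-dimensional logarithmic strata, then at an isolated stratum use Cohen--Macaulayness of $\calO_U[y]/\calL_f$ (Theorem~\ref{thm-Lf-CM}) to rule out any component supported over the isolated point by the dimension count $\dim\le n<n+1$. The one point worth tightening is the logical order: you invoke Serre's $R_0$ and $S_1$ to conclude radicality \emph{before} establishing uniqueness of minimal primes, but the $R_0$ check requires knowing that every generic point of $V(\calL_f)$ lies over the locus where $\calL_f$ is already known to be prime, and that is precisely what the subsequent dimension count (any extra minimal prime would be supported over $\{\x\}$, hence too small) delivers—so reducedness and irreducibility should be extracted together from the dimension argument, as the paper does, rather than sequentially.
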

\begin{proof}
  Again, we argue locally.  By the discussion above, $\calL_f$ is
  prime away from singular points of $f_\red$. We argue by induction on
  the strata of the logarithmic stratification $\Sigma_Y$ of
  $f$. Suppose $\sigma$ is an $i$-dimensional stratum and assume it
  has been shown over all strata of dimension larger than $i$ that
  $\calL_f$ is prime.  The local product structure discussed in Remark
  \ref{rmk-Whitney-diffeo-product}.\eqref{rmk-Whitney-splitting}
  implies that the $g$ in that remark inherits from $f$ all hypotheses of
  the present theorem; compare Remark \ref{rmk-Ehom}.
%, with the understanding that ``global'' is now relative to
%  the copy of $\CC^{n-1}$ in the product near $p$. 
  We can therefore assume that $\sigma$
  is a point $\x$ and calculate in $\calO_{X,\x}[y]$.

  If $\calL_f$ is not prime at the zero-dimensional stratum $\sigma$, it
  has a component primary to a prime ideal that contains the defining
  ideal of $\sigma$.  Such component has dimension $n$ or less
  in the cotangent bundle near $\sigma$. But by Theorem
  \ref{thm-Lf-CM}, $\calL_f$ is a Cohen--Macaulay ideal of generic height
  $n-1$. So all primary components of $\calL_f$ must have the same dimension,
  namely $\dim(\calL_f)=n+1$.  Hence $\calL_f$ has no component over
  $\sigma$, and thus is a prime ideal at $\sigma$.
\end{proof}

\begin{rmk}\label{rmk-Lf-rad}
We have shown that, for Saito-holonomic and strongly Euler-homo\-geneous
$f$, $\calL_f$ is dimension $n+1$ and prime and Cohen--Macaulay over
the tame locus. So for example, with these hypotheses, $\dim
\calL_f=n+1$ for all $f$ with at most one-dimensional non-tame locus
(\emph{e.g.}, when $\dim X\le 5$).

  Saito-holonomicity is necessary for this to be true.  
%For example,
%  $f=xyz(x+y)(x+yz)$ is tame (even free), locally strongly
%  Euler-homogeneous, and has the global Euler-homogeneity
%  $(x\del_x+y\del_y)/4$. However, $\calL_f$ defines a (reduced)
%  Cohen-Macaulay ideal with two components: the expected one coming
%  from the generic points, and every cotangent point over the $z$-axis
%  (where each logarithmic vector field vanishes). We do not know to
%  what extent Saito-holonomicity is necesary for
%  Theorem~\ref{thm-Lf-CM} or even for Theorem~\ref{thm-C-exact}.
The divisor $f=xyz(x+y+z)(x+ay+bz)$ in $\CC^5$ is tame, strongly
 Euler-homogeneous, and $C^\bullet_f$ is exact.
 Its logarithmic derivations vanish along the plane $x=y=z=0$
 and thus $\calL_f$ has a component of dimension $7$ and is very much not a
 Cohen--Macaulay ideal of dimension $6$.\schluss
\end{rmk}

\begin{cor}\label{cor-tildeLf}
Let $f\in \calO_X$ be strongly Euler-homogeneous, Saito-holonomic, and
tame.  Then the ideal $\tilde \calL_f$ of $\gr_{(0,1)}(\calD_X[s])$
generated by the symbols of the operators of order one in
$\ann_{\calD_X[s]}(f^s)$ is a Cohen--Macaulay ideal of dimension $n$.
\end{cor}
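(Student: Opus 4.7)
The plan is to deduce the corollary by combining the preceding theorem on $\ann_{\calD_X[s]}(f^s)$ with Theorems \ref{thm-Lf-CM} and \ref{thm-Lf-prime}; the whole argument reduces to a single regular-element step. First, the theorem on $\ann_{\calD_X[s]}(f^s)$ applies under the present hypotheses and yields
\[
\ann_{\calD_X[s]}(f^s)=\calD_X[s]\cdot\bigl(\Der_X(-\log_0 f),\ E-s\bigr),
\]
with $E$ a strong Euler field and all listed generators of order one. Writing an arbitrary order-one operator as $P=g+\sum a_i\del_i+bs$ with $g,a_i,b\in\calO_X$, the requirement $P\bullet f^s=0$ forces $g=0$ and $\sum a_i f_i+bf=0$, so using the Euler splitting $\delta=\delta_0+hE$ from Remark~\ref{rmk-Der0} one sees that every order-one element of $\ann_{\calD_X[s]}(f^s)$ is an $\calO_X$-linear combination of $\Der_X(-\log_0 f)$ and $E-s$. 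Taking $(0,1)$-symbols and invoking Definition~\ref{dfn-Lf} therefore gives
\[
\tilde\calL_f=\bigl(\calL_f,\ \sigma(E-s)\bigr)\quad\text{inside}\quad\gr_{(0,1)}(\calD_X[s])\cong\gr_{(0,1)}(\calD_X)[s].
\]

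Next, by Theorems \ref{thm-Lf-CM} and \ref{thm-Lf-prime}, the ring $\gr_{(0,1)}(\calD_X)/\calL_f$ is a Cohen--Macaulay integral domain of dimension $n+1$, and these properties are inherited by the polynomial extension $\gr_{(0,1)}(\calD_X)[s]/\calL_f$. The crux is then to verify that $\sigma(E-s)=\sigma(E)-s$ is a non-zerodivisor on this extended quotient. Since its coefficient of $s$ is the unit $-1$, it is automatically a regular element on any $\CC[s]$-flat domain, provided only that its constant term $\sigma(E)$ is not already in $\calL_f$. For this non-membership, suppose $\sigma(E)=\sum_i g_i\sigma(\delta_i)$ with $\delta_i\in\Der_X(-\log_0 f)$ and $g_i\in\calO_X$; comparison of principal symbols forces $E-\sum_i g_i\delta_i$ to be a derivation of order zero, hence the zero derivation, so $f=E(f)=\sum_i g_i\delta_i(f)=0$, a contradiction.

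Having established that $\sigma(E-s)$ is regular on $\gr_{(0,1)}(\calD_X)[s]/\calL_f$, the standard fact that cutting a Cohen--Macaulay ring by a regular element preserves Cohen--Macaulayness while dropping the Krull dimension by one yields the conclusion: $\tilde\calL_f$ is Cohen--Macaulay of the asserted dimension $n$. The only substantive obstacle is this non-zerodivisor verification for $\sigma(E-s)$; the rest is a direct combination of the preceding theorems.
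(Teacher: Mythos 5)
Your overall strategy—reduce to a single regular-element step, after quoting Theorem \ref{thm-tame-annfs}, Theorem \ref{thm-Lf-CM} and Theorem \ref{thm-Lf-prime}—is the same as the paper's, but the way you handle the $s$-variable introduces a genuine gap that breaks the dimension count.

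First, the symbol. In the $(0,1)$-order filtration on $\calD_X[s]$ the variable $s$ has order zero, so the principal symbol of the order-one operator $E-s$ is simply $\sigma(E)=E\diamond\ydx$; the $-s$ belongs to $F_0$ and vanishes in $F_1/F_0$. Writing $\sigma(E-s)=\sigma(E)-s$ is therefore not correct. The paper accordingly identifies $\tilde\calL_{f,\x}=\calL_{f,\x}+\calO_{X,\x}[y]\cdot(E_\x\diamond\ydx)$ as an ideal of $\gr_{(0,1)}(\calD_{X,\x})\cong\calO_{X,\x}[y]$, not of $\calO_{X,\x}[y][s]$.

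Second, the dimension. Under your setup the quotient is
\[
\gr_{(0,1)}(\calD_{X,\x})[s]\big/\bigl(\calL_f,\ \sigma(E)-s\bigr)\ \cong\ \gr_{(0,1)}(\calD_{X,\x})/\calL_f,
\]
via $s\mapsto\sigma(E)$, a ring of dimension $n+1$, not $n$; indeed $\gr_{(0,1)}(\calD_{X,\x})[s]/\calL_f$ has dimension $n+2$ (one more than the dimension $n+1$ you quote for $\gr_{(0,1)}(\calD_{X,\x})/\calL_f$), so cutting by one regular element lands at $n+1$. The corollary's ``dimension $n$'' is obtained only when one works in $\calO_{X,\x}[y]$ and cuts $\calO_{X,\x}[y]/\calL_f$ (dimension $n+1$) by the single element $E_\x\diamond\ydx$.

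Third, as a consequence of the previous two points, the hard step is misplaced. As you yourself note, $\sigma(E)-s$ is \emph{automatically} a nonzerodivisor on $\bigl(\gr_{(0,1)}(\calD_{X,\x})/\calL_f\bigr)[s]$ because its $s$-coefficient is a unit; so the non-membership $\sigma(E)\notin\calL_f$ plays no role in your regularity argument. In the paper's setup, by contrast, the element being cut is $E_\x\diamond\ydx$ itself, and there the non-membership is exactly what is needed: since $\calO_{X,\x}[y]/\calL_f$ is a domain (Theorem \ref{thm-Lf-prime}), showing $E_\x\diamond\ydx\notin\calL_f$ \emph{is} showing regularity. Your proof of that non-membership (compare degree-one symbols, conclude $E=\sum g_i\delta_i$ as derivations, evaluate on $f$) is correct and is a slight rephrasing of the paper's observation that $\calL_f$ is $y$-graded while $E_\x\notin\Der_{X,\x}(-\log_0 f)$.

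To repair the argument: drop the $-s$, work in $\gr_{(0,1)}(\calD_{X,\x})\cong\calO_{X,\x}[y]$, set $\tilde\calL_{f,\x}=\calL_{f,\x}+\calO_{X,\x}[y]\cdot(E_\x\diamond\ydx)$, and use your non-membership computation to conclude that $E_\x\diamond\ydx$ is regular on the Cohen--Macaulay domain $\calO_{X,\x}[y]/\calL_{f,\x}$ of dimension $n+1$; then cutting by this single regular element yields a Cohen--Macaulay quotient of dimension $n$. With that correction your argument coincides with the paper's.
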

\begin{proof}
All hypotheses and conclusions are local properties, so we may
calculate in $\calO_{X,\x}[y]$, where $f$ has Euler field $E_\x$.  The
order one operators in $\ann_{\calD_{X,\x}[s]}(f^s)$ are
$\CC[s]\otimes_\CC \Der_{X,\x}(-\log_0 f)\oplus
\calO_{X,\x}[s]\cdot(E_\x-s)$. Hence,
$\tilde\calL_{f,\x}=\calL_{f,\x}+\calO_X[y]\cdot (E_\x\diamond \ydx)$.
As $E_\x\not \in \Der_{X,\x}(-\log_0 f)$, we also have $( E_\x\diamond
\ydx)\not\in \calL_{f,\x}$ as $\calL_{f,\x}$ is graded in $y$.  By
Theorem \ref{thm-Lf-prime}, $E_\x\diamond\ydx$ is regular on
$\calO_{X,\x}[y]/\calL_{f,\x}$.  Hence $\calO_{X,\x}[y]/\tilde
\calL_{f,\x}$ is a quotient of a Cohen--Macaulay ring by a regular
element.
\end{proof}

\begin{rmk}
At every $\x\in X$ where $\Omega^1_X(\log f)$ is free, in the situation of
Corollary \ref{cor-tildeLf},
%$f$ is not just
%tame but even free then 
$\calL_{f,\x}$ and, {\it a fortiori}, $\tilde \calL_{f,\x}$ are local complete
intersections.
%To see this, note that on any open set $U$
%where $f$ is free the module $\Der_{U,0}(-\log f)$ is generated by
%$n-1$ elements and so $\calL_f$ is $(n-1)$-generated. But as it is height
%$n-1$, it must be a complete intersection on $U$.  
In particular, if $f$ is free, satisfies the conditions of Corollary
\ref{cor-tildeLf}, and has a global homogeneity on affine $X$ then
$\calL_f$ and $\tilde \calL_f$ are complete intersections, compare
\cite{Narvaez-Contemp08}. For arrangements, related ideas in a different
context have been worked out in \cite{CDFV-Canadian11}\schluss
\end{rmk}

\begin{rmk}
Tameness might be a red herring in Corollary
\ref{cor-tildeLf}; we know of no case of a strongly Euler-homogeneous and
Saito-holonomic divisor where $\tilde \calL_f$ is not a
Cohen--Macaulay ideal (even including the cases when $\calL_f$ is not a
Cohen--Macaulay ideal).

On the other hand, Saito-holonomicity is necessary: for the $f$ from
Remark \ref{rmk-Lf-rad}, both $\calL_f$ and $\tilde \calL_f$ have
dimension $7$, but are generically (on $X$) of dimensions $6$ and $5$
respectively.\schluss
\end{rmk}

%  Every polynomial in two variables is free, but if $f=x^5+y^4+x^4y$
%  then $\ann_{D[s]}(f^s)$ is not generated by derivations.  On the
%  other hand, $\ann_D(f^s)$ is principal, hence a complete
%  intersection, for this $f$. Moreover, the characteristic ideal of
%  $\ann_{D[s]}(f^s)$ is still Cohen--Macaulay.
%\uli{no idea what to do with this}
%\end{rmk}

\begin{rmk}
While the hypotheses of Theorem \ref{thm-Lf-CM} apply to $f$ if and
only if they apply to $f^k$ (they have similar
logarithmic vector fields and Euler-homogeneities), it it not clear
whether divisors with the same support are necessarily
simultaneously free or tame.
%For example,
%$X_3=xyz(x+y)(x+z)(y+z)$ is not free but its modification
%$x^2y^2z^2(x+y)(x+z)(y+z)$ is. 
It would be interesting to understand
the exponents over an arrangement that make it free (or tame); note
that this question can be extended to complex exponents and then
relates to multivariate Bernstein--Sato constructions,
\cite{Budur-BSidealsAndLocalSystems}. For example, the generic
arrangement $xyz(x+y+z)$ is tame but not free, while
$x^ay^bz^c(x+y+z)^d$ is ``a free divisor'' if $a+b+c+d=0$ in the sense
that the relations between the generators of its partial derivatives
form a free module. Similarly, the exponents $(-2,-2,-2,1,1,1,1,1,1)$
used in this sequence on the factors of the non-tame reduced
arrangement in Example \ref{exa-bracelet}  give a tame
``divisor'' in this sense.\schluss
\end{rmk}

\subsection{Blowing up the Jacobian ideal}
%%%%%%%%%%%%%%%%%%%%%%%%%%%%%%%%%%%%%%%%%%%%%%%%%%%%%%%%%%
The ring $\gr_{(0,1)}(\calD_X)/\calL_f$ is the symmetric algebra of
the Jacobian ideal of $f$: $\calL_f$ is the kernel of the map from
$\Sym(\calO_X^n)\to \Sym(\Jac(f))$ induced by the presentation
$(f_1,\ldots,f_n)$ of $\Jac(f)$.  Symmetric algebras and their
homological properties have been studied intensively for decades,
since they form an approximation to the Rees algebra of an ideal.
Indeed, $\calL_f$ is the linear part (in our grading) of the kernel of
the map that defines the blow-up of $\Jac(f)$.

We record here what we have shown about the symmetric algebra.

\begin{cor}\label{cor-blowup}
Assume that one of the following statements holds:
\begin{itemize}
\item $n\le 3$;
\item $f$ is strongly Euler-homogeneous, tame and Saito-holonomic.
\end{itemize}
Then: 
\begin{enumerate}
\item the regularity of the ideal
$\calL_f$ with respect to the cotangent variables $y_1\ldots,y_n$ is
one: the Jacobian ideal is of linear type;
\item the symmetric algebra
and the Rees algebra of $\Jac(f)$ agree, and are Cohen--Macaulay
domains.
\end{enumerate}
\end{cor}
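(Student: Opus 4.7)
The plan is to assemble the three main results of this section. Under either hypothesis of the corollary, Theorem~\ref{thm-C-exact} ensures that the Liouville complex $C^\bullet_f$ is a resolution of $\calO_X[y]/\calL_f$. In the strongly Euler-homogeneous, Saito-holonomic, tame case, Theorem~\ref{thm-Lf-CM} additionally gives that $\calO_X[y]/\calL_f$ is Cohen--Macaulay and Theorem~\ref{thm-Lf-prime} gives that it is a domain; for $n\le 3$ both conclusions follow from the same two theorems since $\pdim_{\calO_X}\Omega^i_X(\log f)\le i$ holds automatically in dimension $\le 3$ (indeed, in the relevant range $i\le 1$). Recall from the opening paragraph of the subsection that $\gr_{(0,1)}(\calD_X)/\calL_f$ is, by construction, the symmetric algebra of $\Jac(f)$. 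So for part~(2) the Cohen--Macaulay and domain properties of $\Sym(\Jac(f))$ are immediate, and the sole remaining assertion is that the canonical surjection $\Sym(\Jac(f))\twoheadrightarrow \Rees(\Jac(f))$ is an isomorphism.

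To settle this I would invoke Remark~\ref{rmk-Cf}: after a local choice of coordinates and up to a twist by $1/f$, the complex $C^\bullet_f$ is isomorphic to the complex of cycle-modules sitting inside the cohomological Koszul complex on $\calO_X[y]$ generated by the partial derivatives $f_1,\ldots,f_n$. This is precisely (a twist of) the $\calZ$-approximation complex of the Jacobian ideal in the sense of Herzog--Simis--Vasconcelos. Their central theorem asserts that acyclicity of the approximation complex forces the ideal in question to be of linear type, meaning that $\Sym(\Jac(f))=\Rees(\Jac(f))$. Applying Theorem~\ref{thm-C-exact} therefore closes part~(2); it also proves the linear-type half of part~(1).

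For the regularity statement in part~(1) I would track the $y$-grading through the double complex of the proof of Theorem~\ref{thm-Lf-CM}. By Remark~\ref{rmk-Cf}.\eqref{rmk-Cf-grading}, $C^i_f=(C^i_f)_{n-i}\otimes_{\calO_X}\gr_{(0,1)}(\calD_X)$ with $(C^i_f)_{n-i}$ sitting in $y$-degree $n-i$. Choosing a minimal graded $\calO_X[y]$-resolution $F^i_\bullet\to C^i_f$ (as in that earlier proof), each $F^i_j$ is generated in $y$-degree $n-i$; the totalized double complex is a minimal graded free resolution of $\calO_X[y]/\calL_f$, with $F^i_j$ appearing in cohomological position $i+j$. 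Shifting so that $\calL_f$ sits in homological degree $1$, the generators at homological position $p$ lie in $y$-degree at most $p$, and the bound is attained at $p=1$ (recovering the fact that $\calL_f$ is generated by the order-one symbols in $\Der_X(-\log_0 f)$). This is the definition of $y$-regularity~$1$ for $\calL_f$.

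The only real obstacle is bookkeeping: correctly matching $C^\bullet_f$ with the Herzog--Simis--Vasconcelos $\calZ$-complex through the twist by $1/f\otimes\omega_X^{-1}$, and keeping the cohomological, homological, and internal $y$-gradings aligned so that the ``linear'' character of the resolution is read off cleanly. Once those identifications are in place, both parts are essentially a repackaging of Theorems~\ref{thm-C-exact}, \ref{thm-Lf-CM}, and~\ref{thm-Lf-prime}.
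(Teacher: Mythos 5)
Your overall structure matches the paper's: both proofs rest on Theorems~\ref{thm-C-exact}, \ref{thm-Lf-CM}, and~\ref{thm-Lf-prime}, and the regularity-one argument via the $y$-grading of the double complex from the proof of Theorem~\ref{thm-Lf-CM} is essentially what the paper says in one line (``the differential in $C^\bullet_f$ is linear in $y$'' while the column resolutions $F^i_\bullet$ are $y$-independent). The genuine divergence is how you close the gap between $\Sym(\Jac(f))$ and $\Rees(\Jac(f))$. The paper's argument is direct: $\Sym(\Jac(f))=\calO_X[y]/\calL_f$ is a Cohen--Macaulay \emph{domain} of dimension $n+1$ by Theorems~\ref{thm-Lf-CM} and~\ref{thm-Lf-prime}, the Rees algebra is a quotient, and a proper quotient of a domain drops dimension while $\Proj(\Rees)$ must still be $n$-dimensional, so the quotient map is an isomorphism. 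You instead invoke a Herzog--Simis--Vasconcelos theorem to the effect that acyclicity of the $\calZ$-approximation complex forces linear type. That citation is doing more work than I think it can bear: acyclicity of $\calZ^\bullet$ yields a resolution of $\Sym(I)$ by torsion-free (second-syzygy) modules, but a resolution by torsion-free modules does not by itself make the terminal module $R$-torsion-free, and the kernel of $\Sym(I)\to\Rees(I)$ is exactly that $R$-torsion; the HSV results giving linear type carry additional hypotheses beyond acyclicity. The good news is that the detour is unnecessary: once you have the domain property from Theorem~\ref{thm-Lf-prime}, $\Sym(\Jac(f))$ has no $R$-torsion, so $\Sym(\Jac(f))\to\Rees(\Jac(f))$ is injective and hence an isomorphism --- precisely the paper's observation, repackaged. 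I would replace the HSV appeal with that one-sentence argument; the rest of your write-up then coincides with the paper's proof.
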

\begin{proof}
It suffices to consider the local case and we work over
$S=\calO_{X,\x}[y]$.  In both cases, $C^\bullet_f$
is a resolution of $S/\calL_f$ and each $C^i_f$ has a free resolution
whose maps are independent of $y$. The first claim follows from the
fact that the differential in $C^\bullet_f$ is linear in $y$. The second
claim follows from Theorem \ref{thm-Lf-prime} and \ref{thm-Lf-CM}
inasmuch as the symmetric algebra is concerned. If the Rees algebra
were not to agree with the symmetric algebra, it would have to be of
dimension less than $n+1$ under our hypotheses. But that is not
possible, since its projectivization must be dimension $n$.
\end{proof}

\begin{rmk}
\begin{asparaenum}
\item The previous result  generalizes Theorem 5.6 in
\cite{CalderonNarvaez-theModuleDfs} where freeness and
quasi-homogeneity was assumed. 
\item We do not know any instance where $C^\bullet_f$ is not a
  resolution, irrespective of \emph{any} hypotheses.  On the other
  hand, Torrelli pointed out that the ideal
  $\calO_X\cdot f+\Jac(f)$ can be  linear type at $\x$ only if $f$ is
  Euler-homogeneous at $\x$,
  \cite[Rmk.~1.26]{CalderonNarvaez-logCompThm}
\item 
For $f=xy(x+y)$, 
the ideal $\calL_f$ is not  normal.
%\uli{
%\cite{Calderon-AnnSciEN99} says free implies $V^0_Y=$log field
%  algebra. True for my stuff? This was 
%
%The first paper to look at $D/\Der \log$. Also, Narvaez had complex in free case}
\schluss\end{asparaenum}
\end{rmk}

\subsection{The annihilator of $f^s$}
%%%%%%%%%%%%%%%%%%%%%%%%%%%%%%%%%%%%%%%%%

\begin{lem}\label{lem-know-annfs}
For $\x\in \CC^n$ let $f\in R=\calO_{X,\x}$, and let $\calP$ be a set of
differential operators in $\ann_{\calD_{X,\x}}(f^s)$ such that the
ideal sum in $R[y]$ of $\calL_{f}$ with the ideal generated by
$\gr_{(0,1)}(\calP)$ contains a prime ideal of height $n-1$. Then
$\ann_{\calD_{X,\x}}(f^s)$ is generated by $\Der_{X,\x}(-\log_0 f)$
and $\calP$.
\end{lem}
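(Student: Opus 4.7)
The plan is to prove the nontrivial inclusion $\ann_{\calD_{X,\x}}(f^s)\subseteq I$, where $I:=\calD_{X,\x}\cdot(\Der_{X,\x}(-\log_0 f)\cup\calP)$, by induction on the order of operators; the reverse inclusion is built into the hypotheses.

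The heart of the argument is the symbol identity
\[
\gr_{(0,1)}(\ann_{\calD_{X,\x}}(f^s))=\gr_{(0,1)}(I)=\frakp,
\]
where $\frakp$ denotes the given height-$(n{-}1)$ prime sitting inside $\calL_f+\ideal{\gr_{(0,1)}(\calP)}$. The chain of $\calO_{X,\x}[y]$-ideals
\[
\frakp\subseteq \calL_f+\ideal{\gr_{(0,1)}(\calP)}\subseteq \gr_{(0,1)}(I)\subseteq \gr_{(0,1)}(\ann_{\calD_{X,\x}}(f^s))
\]
is immediate. For the reverse direction I will invoke the standard local computation recalled in the paragraph surrounding \eqref{eq-3ideals}: at every smooth point of $f_\red$, $\gr_{(0,1)}(\ann_{\calD_X}(f^s))$ is a prime of height $n-1$. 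Since $\frakp$ is contained in this generic symbol ideal and shares its height, the two primes coincide on the smooth locus $U\subseteq X$ of $f_\red$. Translating to $T^*X$, the closed subvariety $\Var(\gr_{(0,1)}(\ann_{\calD_{X,\x}}(f^s)))$ of the irreducible $(n{+}1)$-dimensional $\Var(\frakp)$ contains the subset $\pi^{-1}(U)\cap\Var(\frakp)$, which is nonempty (it contains the conormal to the smooth part of $f_\red$) and hence dense. By irreducibility of $\Var(\frakp)$, the containment is an equality, so $\sqrt{\gr_{(0,1)}(\ann_{\calD_{X,\x}}(f^s))}=\frakp$. Sandwiched between $\frakp$ and its own radical, the graded annihilator equals $\frakp$.

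With the symbol identity in hand the induction runs routinely. For $Q\in\ann_{\calD_{X,\x}}(f^s)$ of order $k\ge 1$, the top symbol $\gr_{(0,1)}(Q)\in\gr_{(0,1)}(I)_k$ lifts to an operator $Q'\in I$ of order $k$ with the same top symbol (since $\gr_{(0,1)}(I)$ is generated as an $\calO_{X,\x}[y]$-module by the symbols of the generators of $I$), and $Q-Q'\in\ann_{\calD_{X,\x}}(f^s)$ has strictly smaller order. The base case $k=0$ is immediate: $f^s$ generates a free rank-one $\calO_{X,\x}[f^{-1},s]$-module, so no nonzero element of $\calO_{X,\x}$ can annihilate it. The main obstacle is justifying that the open subset $\pi^{-1}(U)\cap\Var(\frakp)$ is nonempty, i.e., that $\Var(\frakp)$ actually dominates a piece of the smooth locus of $f_\red$; this follows because $\Var(\frakp)$, being an $(n{+}1)$-dimensional irreducible subvariety of $\Var(\calL_f+\ideal{\gr_{(0,1)}(\calP)})$, must coincide over $U$ with the conormal to the smooth part of $f_\red$, the unique $(n{+}1)$-dimensional component there.
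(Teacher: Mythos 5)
Your proof is correct in its main thrust and follows essentially the same strategy as the paper: establish the symbol identity $\gr_{(0,1)}(\ann_{\calD_{X,\x}}(f^s))=\frakp$ by a dimension/irreducibility argument on the characteristic variety (the paper phrases this as a height contradiction, you phrase it as a density argument on $\Var(\frakp)$ --- two views of the same thing), then lift symbols by induction on order. The sandwich $\frakp\subseteq\gr_{(0,1)}(\ann)\subseteq\sqrt{\gr_{(0,1)}(\ann)}=\frakp$ and the induction step are both right.

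However, your closing paragraph has a direction error. From $\frakp\subseteq\calL_f+\ideal{\gr_{(0,1)}(\calP)}$ one gets $\Var(\frakp)\supseteq\Var(\calL_f+\ideal{\gr_{(0,1)}(\calP)})$, not ``$\Var(\frakp)$ is a subvariety of $\Var(\calL_f+\ideal{\gr_{(0,1)}(\calP)})$'' as you write; that inclusion runs the wrong way, so the argument as stated does not establish that $\Var(\frakp)$ meets $\pi^{-1}(U)$. The fact you need is still true, but it should be justified differently: over $U$ the chain $\calL_f\subseteq\calL_f+\ideal{\gr_{(0,1)}(\calP)}\subseteq\gr_{(0,1)}(\ann)=\calL_f$ collapses, forcing $\frakp\subseteq\calL_f$ there; hence $\Var(\calL_{f})\cap\pi^{-1}(U)\subseteq\Var(\frakp)\cap\pi^{-1}(U)$, and the left side is nonempty. (Equivalently: the closure $W$ of $\Var(\calL_{f,U_0})$ is irreducible of dimension $n+1$, reaches $\pi^{-1}(\x)$ along the zero section, and sits inside the characteristic variety; its defining prime has height $n-1$ and contains $\gr_{(0,1)}(\ann)\supseteq\frakp$, so it must equal $\frakp$.) One more small terminological point: what you call ``the conormal to the smooth part of $f_\red$'' is really $\Var(\calL_f)$ over $U$, the union of conormals to all level hypersurfaces of $f$ (dimension $n+1$); the conormal to $\Var(f_\red)$ alone is only $n$-dimensional.
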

\begin{proof}
By hypothesis,
$\gr_{(0,1)}(\ann_{\calD_{X,\x}}(f^s))$ contains a prime ideal of
height $n-1$. If $\gr_{(0,1)}(\ann_{\calD_{X,\x}}(f^s))$ were not
equal to this prime ideal, it would be of height $n$ or more. This
contradicts what we know over a generic point of $X$,
but all points have such points in every neighborhood. Hence
$\gr_{(0,1)}(\ann_{\calD_{X,\x}}(f^s))$ is this prime ideal and
generated by the symbols of $\calP$ together with $\calL_f$.

If $Q\in\ann_{\calD_{X,\x}}(f^s)$, then for suitable $a_\delta,b_P\in
\calD_{X,\x}$ (almost all of them zero) we have
\[
\gr_{(0,1)}(Q)=\sum_{\delta\in\Der_{X,\x}(-\log_0
  f)}\gr_{(0,1)}(a_\delta\cdot\delta) +\sum_{P\in \calP}\gr_{(0,1)}(b_P\cdot
P).
\]
For graded (in $y$) ideals in $\calO_{X,\x}$, such as the ideal
generated by $\calL_{f,\x}$ and the symbols of $\calP$, 
this can be arranged in such a way that the
orders of any $a_\delta\cdot\delta$ and $b_P\cdot P$ are equal to 
the order of $Q$. Then $Q$ can be reduced modulo
$\calD_{X,\x}(\calP,\Der_{X,\x}(-\log_0 f))$ to an operator in
$\ann_{\calD_{X,\x}}(f^s)$ of lower order. By induction on this
  order, the lemma follows.
\end{proof}

\begin{thm}\label{thm-tame-annfs}
  If $f\in\calO_X$ is tame, strongly Euler-homogeneous and Saito
  holonomic then $\ann_{\calD_{X}[s]}(f^s)$ is generated at $\x\in X$
  by $\Der_X(\log_0(f))$ and any Euler-homogeneity $E_\x-s$ where
  $E_\x\bullet(f)=f$. If $X$ is the analytic space to a smooth
  $\CC$-scheme, this holds also in the algebraic category.
\end{thm}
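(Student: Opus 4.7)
The plan is to handle first the $\calD_X$-annihilator and then bootstrap to the $\calD_X[s]$-annihilator via the Euler relation. All statements being local, I fix $\x\in X$.

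For the first step I would apply Lemma \ref{lem-know-annfs} with $\calP=\emptyset$ to obtain $\ann_{\calD_{X,\x}}(f^s)=\calD_{X,\x}\cdot\Der_{X,\x}(-\log_0 f)$. The hypothesis to verify is that $\calL_f$ itself contains a prime ideal of height $n-1$. Under the given assumptions, Theorem \ref{thm-Lf-prime} asserts that $\calL_f$ is a prime ideal in the $2n$-dimensional ring $\gr_{(0,1)}(\calD_{X,\x})\cong\calO_{X,\x}[y]$, and its generic height equals $n-1$ by Remark \ref{rmk-Lf2}; hence $\calL_f$ itself is a height $n-1$ prime, as required.

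For the second step, the Euler field $E_\x$ with $E_\x\bullet(f)=f$ satisfies $E_\x\bullet f^s=sf^s$ by the defining formula \eqref{eqn-intro-fs}, so $E_\x-s\in\ann_{\calD_{X,\x}[s]}(f^s)$. The key algebraic fact I would establish is the left-module decomposition
\[
\calD_{X,\x}[s]=\calD_{X,\x}+\calD_{X,\x}[s]\cdot(E_\x-s).
\]
This follows by induction on the $s$-degree: since $s$ is central, for any $P\in\calD_{X,\x}$ we have $sP=Ps=PE_\x-P(E_\x-s)$, which puts $s\cdot\calD_{X,\x}$ into the right-hand side. Consequently every $P(s)\in\ann_{\calD_{X,\x}[s]}(f^s)$ decomposes as $Q+R(E_\x-s)$ with $Q\in\calD_{X,\x}$ and $R\in\calD_{X,\x}[s]$; applying both sides to $f^s$ kills the second summand and forces $Q\in\ann_{\calD_{X,\x}}(f^s)$, which by the first step lies in $\calD_{X,\x}\cdot\Der_{X,\x}(-\log_0 f)$. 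The algebraic statement is inherited from the analytic one via the analytic--algebraic comparison for syzygies and differential annihilators recorded at the end of Section 2.

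The main obstacle is essentially already absorbed into Theorem \ref{thm-Lf-prime}: once $\calL_f$ is known to be prime of the generic height $n-1$, the remainder of the argument is a short reduction through Lemma \ref{lem-know-annfs} together with the elementary centrality manipulation in $s$. All three structural hypotheses (tameness, Saito-holonomicity, strong Euler-homogeneity) enter the proof only through that theorem and the preceding Theorem \ref{thm-Lf-CM}.
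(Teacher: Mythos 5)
Your proposal is correct and follows essentially the same route as the paper: Theorem~\ref{thm-Lf-prime} supplies the primeness of $\calL_f$ (and, combined with Theorem~\ref{thm-Lf-CM} and Remark~\ref{rmk-Lf2}, its height $n-1$), which is exactly what is needed to apply Lemma~\ref{lem-know-annfs} with $\calP=\emptyset$; the $\calD_{X,\x}[s]$-statement then follows from the Euler relation and the $s$-degree induction you carry out, and the algebraic case reduces to the analytic one because all the generators are syzygies of the derivatives of $f$. The paper's printed proof is simply terser -- it cites Theorem~\ref{thm-Lf-prime} and Lemma~\ref{lem-know-annfs} and leaves the bootstrap from $\ann_{\calD_{X,\x}}(f^s)$ to $\ann_{\calD_{X,\x}[s]}(f^s)$ implicit (it is stated as ``in consequence'' in the introduction) -- whereas you spell out the decomposition $\calD_{X,\x}[s]=\calD_{X,\x}+\calD_{X,\x}[s]\cdot(E_\x-s)$ explicitly; the content is the same.
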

\begin{proof}
Locally, this follows from Theorem \ref{thm-Lf-prime} and Lemma
\ref{lem-know-annfs}. So the containment of sheaves
$\calD_X\cdot\Der_{X}(-\log_0 f)\subseteq \ann_{\calD_X}(f^s)$ is an
isomorphism. Now note that algebraic $f$ has algebraic derivatives;
all syzygies between the derivatives of $f$ will then be algebraic.
%With the results of the previous section in hand, the proof is a
%  standard principle in Gr\"obner theory. Due to the global Euler
%  field one reduces to $\ann_D(f^s)$.  Consider the inclusion $D\cdot
%  \delta\subseteq \ann_D(f^s)$.  Choose a term order $\order$ in $D$
%  that refines the order given by the weight function $x\mapsto 0$,
%  $\del\mapsto 1$. Outside the vanishing locus of $f$, the lead ideal
%  $\gr_{(0,1)}(\ann(f^s))$ of $\ann_D(f^s)$ has dimension $n+1$ in
%  $T^*X$ by Kashiwara \cite{???}. Since $\gr_{(0,1)}(\ann(f^s))$
%  contains $\calL_f$, and since under our hypotheses $\calL_f$ is prime ideal
%  of height $n-1$, the two lead ideals must be equal.%%%
%
%  Let $P$ be any operator in $\ann_D(f^s)$. The lead term of $P$
%  relative to $\order$ is in the ideal generated by the lead terms of
%  the derivations in $\ann_D(f^s)$. Thus, one can replace $P$ by an
%  operator with lesser lead term by changing it modulo $D\cdot
%  \Der(-\log f)$.  Since term orders are Noetherian, $\ann_D(f^s)$ is
%  generated by $\delta$.
\end{proof}

\begin{exa}
Let $R=\CC[x_{1,1},\ldots,x_{m,n}]$ be a polynomial ring and let $M$
be the matrix with $m_{i,j}=x_{i,j}$. It is well-known that for
$n=m+1$ the product $f$ of the $m+1$ maximal minors of $M$ is a
(linear) free divisor, see \emph{e.g.}
\cite{GrangerMondNietoSchulze-Fourier09}. When $m=3=n-1$,
J.~Mart\'in-Morales informs us that (on the behest of
L.~Narvaez-Macarro) V.~Levandovskyy and D.~Andres have used the
computer algebra system \emph{Singular} \cite{Singular} to determine
that the annihilator of $f^s$ is not cut out by operators of order
one. Experimental evidence lets us believe that $f$ is strongly
Euler-homogeneous everywhere. Theorem \ref{thm-tame-annfs} indicates
that $f$ should not be Saito-holonomic. And indeed, the locus where
the logarithmic vector fields have rank less than 10 is
10-dimensional: it contains the variety of rank-deficient matrices.
\schluss
\end{exa}

%%%%%%%%%%%%%%%%%%%%%%%%%%%%%%%%%%%%%%%%%%%%%%%%%%%%%%%%%%%%%%%%%%%%%%%%%%%%%%%
\section{Milnor fiber and Jacobian module}\label{sec-Milnor}
%%%%%%%%%%%%%%%%%%%%%%%%%%%%%%%%%%%%%%%%%%%%%%%%%%%%%%%%%%%%%%%%%%%%%%%%%%%%%%%

In this section, $n\geq 2$ and $X$ is the algebraic variety $\CC^n$.
If $f\in\Gamma(X,\calO_X)=:R_n$ denotes a
homogeneous polynomial of degree $d$ in $n$ variables, then the Milnor
fiber $M_{f,0}$ of $f$ at the origin can be identified with the hypersurface
$(f=1)$. 

If $f$ has an isolated singularity at the origin $0\in X$, Milnor 
%in his seminal work on fibrations 
established that the cohomology of $M_{f,0}$ is
encoded in the residue ring of the Jacobian ideal:
%, while
%Malgrange found in \cite{Malgrange-isolee} an explicit connection
%between the Jacobian ring and the Bernstein--Sato polynomial. Namely:
the Milnor fiber is a bouquet of $\mu=\dim_\CC(\calO_{X,0}/\Jac(f))$ many
$(n-1)$-spheres.% and  $(\calO_X/\Jac(f))_\x$ can be viewed 
%as the cohomology group $H^{n-1}(M_f,\CC)\}$. %; if $f$ is homogeneous
%at $\x$
%then the roots of the reduced Bernstein--Sato polynomial are exactly
%$\{-(\deg(g)+n)/d\mid g\in R_n/\Jac(f)$.

Theorem \ref{thm-jac-milnor} below can be seen as a partial extension
to a class of more general homogeneous singularities. In these cases,
the Jacobian ring is not necessarily Artinian, hence only parts of it
can be responsible for cohomology classes on $M_f$. A relevant part
of $R_n/\Jac(f)$ is selected by a local cohomology functor and we
connect it to Hodge-theoretic data on $M_f$ via logarithmic vector
fields.

\begin{rmk}\label{rmk-localcoh}
Let $I$ be an ideal in a commutative ring $R$.  We will make use of
Grothendieck's {\em local cohomology functors} $H^\bullet_I(-)$, the
right derived functors of the left-exact $I$-torsion functor
$H^0_I(-):=\bigcup_t\Hom_R(R/I^t,-)$. For details we refer to
\cite{24h}.\schluss
\end{rmk}

\begin{ntn}
We use the term \emph{simple normal crossing} to label a point
in a divisor where locally a coordinate system exists in which the
divisor takes the form $x_1^{a_1}\cdots x_n^{a_n}$, $a_i\in\NN$, and
where $(x_i=0)$, $(x_j=0)$ define globally distinct components of the divisor
whenever $i\not =j$.
The points of the variety $(g=0)$  where $g$ does not have
simple normal crossings is called the \emph{NSNC-locus of $g$}.
\end{ntn}

\begin{thm}\label{thm-jac-milnor}
Let $f\in R_n$ be homogeneous of degree $d$, reduced, and suppose $n\geq
2$. Assume that $\Proj(R_n/f)$ has isolated singularities.
%, or more generally that 
%the NSNC-locus of $\Proj(R_n[z]/z(f-z^d))$ is of dimension zero. 
Then, with
$1\le k\le d$ and 
$\lambda=\exp(2\pi \sqrt{-1}k/d)$,
\begin{eqnarray*}
\dim_\CC [H^0_\frakm(R_n/\Jac(f))]_{d-n+k}&\le& 
\dim_\CC \gr^{{\rm Hodge}}_{n-2}(H^{n-1}(M_{f,0},\CC)_\lambda)
\end{eqnarray*}
where the right hand side indicates the $\lambda$-eigenspace of the
associated graded object to the Hodge filtration on $H^{n-1}(M_{f,0},\CC)$.
\end{thm}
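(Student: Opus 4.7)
The strategy is a Griffiths--Deligne-style analysis: realize $M_{f,0}$ as a cyclic cover of the projective complement $U:=\PP^{n-1}\minus D$, where $D:=\Proj(R_n/f)$, and compute the Hodge filtration on $H^{n-1}(M_{f,0},\CC)$ by means of rational $(n-1)$-forms on $\PP^{n-1}$ with controlled poles along $D$. Homogeneity and the hypothesis that $D$ have only isolated singularities place the critical locus of $f$ on $\CC^n\minus\{0\}$ inside $\{f=0\}$, so that $M_{f,0}=\{f=1\}$ is smooth affine of complex dimension $n-1$. The canonical projection $\CC^n\minus\{0\}\to\PP^{n-1}$ restricts to an unramified $\mu_d$-Galois cover $\pi\colon M_{f,0}\to U$, and the deck transformations (multiplication by $d$-th roots of unity) decompose
\[
H^{n-1}(M_{f,0},\CC)=\bigoplus_{k=0}^{d-1}H^{n-1}(U,\calL_k),
\]
where $\calL_k$ is the rank-one local system on $U$ attached to the character $\zeta\mapsto\zeta^k$.

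Next I would invoke Griffiths' pole-order description of the Hodge filtration. Every class in $F^{n-2}H^{n-1}(M_{f,0},\CC)_\lambda$ admits a rational representative of pole order at most one along $D$ of the form
\[
\omega_A=\frac{A\,\Omega}{f},\qquad \Omega:=\sum_{i=0}^{n-1}(-1)^ix_i\,\de x_0\wedge\cdots\widehat{\de x_i}\cdots\wedge \de x_{n-1},
\]
with $A\in R_n$ homogeneous. The requirement that $\omega_A$ transform as $\zeta^k$ under $x\mapsto\zeta x$ (so as to descend to a section of $\Omega^{n-1}_U\otimes\calL_k$) forces $\deg A=d-n+k$. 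Modulo forms of pole order zero (which contribute to $F^{n-1}$) and modulo exact forms, Euler's identity $\sum_ix_if_i=d\cdot f$ implies that $\omega_A$ is cohomologous to zero in $\gr^{{\rm Hodge}}_{n-2}$ exactly when $A\in\Jac(f)$. This produces a natural map
\[
\Psi_k\colon [R_n/\Jac(f)]_{d-n+k}\longrightarrow \gr^{{\rm Hodge}}_{n-2}\bigl(H^{n-1}(M_{f,0},\CC)_\lambda\bigr),
\]
which is the classical Griffiths isomorphism when $D$ is smooth, recovering the theorem with equality in that case.

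For isolated singularities of $D$, the task reduces to showing that $\Psi_k$ is injective on the submodule $[H^0_\frakm(R_n/\Jac(f))]_{d-n+k}$. The kernel of $\Psi_k$ arises from additional exactness relations supplied by the geometry of the singular locus of $D$, and its support in $\Spec R_n/\Jac(f)$ is contained in the union of the affine cones over the singular points of $D$, i.e.\ in the positive-dimensional components of $\Spec R_n/\Jac(f)$. By construction $H^0_\frakm(R_n/\Jac(f))$ is supported only at the vertex of the cone, and hence cannot meet the kernel of $\Psi_k$; this transversality yields the desired injection. The argument can be made precise via the long exact sequence for $H^\bullet_\frakm(R_n/\Jac(f))$ combined with a resolution of singularities $\bar D\to D$ and the Deligne--Esnault--Viehweg logarithmic de Rham complex computing $H^\bullet(U,\calL_k)$ on a good compactification.

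The hard part is controlling the Hodge filtration (as opposed to the coarser pole filtration) when $D$ has singularities: pole-$\le 1$ forms can represent classes strictly outside $F^{n-2}$, and conversely $F^{n-2}$ need not be generated by pole-$\le 1$ forms near the singular points. The plan to overcome this is to blow up each singular point of $D$, transfer the Hodge computation to the log-resolution (where pole and Hodge filtrations match), and track how the induced maps on $R_n/\Jac(f)$ behave; here the dictionary developed earlier in the paper between $\Omega^\bullet_X(\log f)$, the Jacobian ideal, and local cohomology supplies the algebraic bridge needed to identify exactly the $\frakm$-torsion subquotient on the cohomological side.
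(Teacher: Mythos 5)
Your proposal follows a genuinely different route from the paper. You propose to view $M_{f,0}$ as a $\mu_d$-cyclic cover of $U=\PP^{n-1}\minus D$, decompose $H^{n-1}(M_{f,0},\CC)$ by characters into local-system cohomology, and then use a Griffiths-type pole-order description to build a map $\Psi_k$ whose injectivity on $\frakm$-torsion gives the inequality. This is essentially the cyclic-cover approach the paper attributes to M.~Saito in the remark following the theorem. The paper instead compactifies the Milnor fiber inside $\PP^n$ by introducing the auxiliary polynomial $F=(f-z^d)z\in S_n$, and then proceeds algebraically: local cohomology exact sequences and the duality between $\Der_{S_n}(-\log_0 F)$ and $\Omega^{n-1}_{S_n}(\log_E F)$ give an \emph{equality} $\bigoplus_{i}\bigl[H^0_\frakm(R_n/\Jac(f))\bigr]_{d-n+i}\cong H^1(\PP^n,\Omega^{n-1}_{\PP^n}(\log F))$, and the inequality is produced by the Leray spectral sequence passing through a log-resolution $(\PP,Y')\to(\PP^n,Y)$, combined with the Deligne--Esnault--Viehweg residue sequence. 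Thus the paper never needs a map like your $\Psi_k$ or an injectivity argument for it; the weakening to an inequality happens entirely in the sheaf-cohomological passage through the resolution.

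Your proposal has a genuine gap in the injectivity step, and you in fact acknowledge the underlying difficulty without resolving it. First, the Griffiths identification of $\gr^{\rm Hodge}_{n-2}$ with pole-order-$\le 1$ forms modulo the Jacobian is only valid when $D$ is smooth; for singular $D$ the pole filtration and the Hodge filtration diverge, so $\Psi_k$ is not obviously well-defined into $\gr^{\rm Hodge}_{n-2}$ and its target must be recomputed on a resolution, which is the hard work you defer to your final paragraph. Second, the claimed injectivity of $\Psi_k$ on $[H^0_\frakm(R_n/\Jac(f))]_{d-n+k}$ rests on a support argument ("the kernel is supported on the positive-dimensional components, while $H^0_\frakm$ is supported at the origin, so they cannot meet") that is not established: one would need to show that $\ker\Psi=\bigoplus_k\ker\Psi_k$ is an $R_n$-submodule of $R_n/\Jac(f)$ with $H^0_\frakm(\ker\Psi)=0$, and nothing in the sketch proves either assertion. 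Without that, two subspaces of a single finite-dimensional graded piece can certainly intersect regardless of the supports of the larger modules they live in. The paper sidesteps this entirely because it does not construct a map out of $R_n/\Jac(f)$ and then argue injectivity; it instead identifies the $\frakm$-torsion directly with a $H^1$ of a logarithmic sheaf via exact sequences, and the inequality is a surjectivity/injectivity statement for cohomology of sheaves on $\PP^n$ versus $\PP$, controlled by the cokernel $Q$ (supported on the zero-dimensional NSNC-locus) and the Leray edge map.
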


\begin{proof}
We first give an outline of the proof of Theorem
\ref{thm-jac-milnor} and then fill in the details in five steps. 

Let $S_n=\CC[x_0,x_1,\ldots,x_n]$ be the coordinate ring of projective
$n$-space. We usually denote $x_0$ by $z$ and identify
$\PP^n\smallsetminus \Var(z)$ with $X=\CC^n$. Then define a new
polynomial $F\in S_n$ by
$F(z,x_1,\ldots,x_n)=(f(x_1,\ldots,x_n)-z^d)z$.
The corresponding homogeneous maximal ideals are
denoted $\frakm=(x_1,\ldots,x_n)R_n$ and
$\frakn=(x_0,\ldots,x_n)S_n$. The projective scheme defined by $F$ is
denoted $Y\subseteq\PP^n$.

We show first that the 
singular
locus of $\Proj(S_n/(f,z))$ is the same as the
NSNC-locus of $\Proj(S_n/F)$ 
and so the hypotheses imply that the
NSNC-locus of $\Proj(S_n/F)$ is zero-dimensional.

Next, choose an embedded resolution $\pi\colon (\PP,Y')\to (\PP^n,Y)$
of singularities that resolves only the NSNC-locus of
$\Proj(S_n/F)$. As $f$ is homogeneous, the resolution process takes
place entirely inside the (preimage of the) locus defined by
$\Var(z)$. Since the Milnor fiber $M_{f,0}$ is $\Var(f-1)\subseteq X$,
it agrees with $\Var(f-z^d)\smallsetminus\Var(z)$ inside
$\PP^n_\CC$. In the resolved model, the strict transform of $f-z^d$ is
smooth, and so the cohomology of $M_{f,0}$ is determined by the
cohomology of logarithmic vector fields on the strict transform of
$\Var(f-z^d)$ along (the preimage of the reduced divisor defined by)
$\Var(z)$. Long exact sequences can be used to translate the issue to
cohomology of logarithmic vector fields on $\PP$ along (the reduced
divisor defined by) $\Var(F)$.  If the NSNC-locus of $\Proj(S_n/F)$ is
zero-dimensional, it can be shown that the appropriate cohomology of
logarithmic vector fields along $\Var(\pi^*(F))$ and along its reduced
divisor are closely related: the Leray spectral sequence allows to
connect both to cohomology of logarithmic vector fields on $\PP^n$
along $\Var(F)$.  We then use local cohomology methods to relate such
cohomology to torsion in the Jacobian ring of $F$ and of $f$.

\stp{0}{The NSNC-locus of  $\Proj(S_n/F)$.}
%%%%%%%%%%%%%%%%%%%%%%%%%%%%%%%%

We have $\Jac(F)=(z\Jac(f),(d+1)z^d-f)S_n$ so that the singular locus
of $\Proj(S_n/F)$ is defined by $(z=f=0)$. In particular, the singular
(and hence the NSNC-) locus of $F$ are inside $(z=0)$.

Without loss of generality we take the chart $(x_n\not =0)$ of
$\Proj(S_n)$ and show that in this chart all points outside
$(z,\Jac(f))S_n$ are simple normal crossing points of $F$. In terms of
coordinates on this chart, write $y_i=x_i/x_n$ and $t=z/x_n$, and write
$g(y_1,\ldots,y_{n-1})$ for $f(x_1,\ldots,x_n)/x_n^d$. 

Imagine a point $p$
outside $\Var(z,\Jac(f))$ that fails to have simple normal
crossings. Since $p$ must be a singular point of $F$, we will have
$z=f=0$ at $p$ 
and so $p$ is in $\Var(z)$ but (therefore) not in $\Var(\Jac(f))$.
So, some derivative $f_\ell$ does not vanish at $p$ and so $f-z^d$ is
smooth in $p$. 

In order for $p$ to be in the NSNC-locus of $F$ it is necessary that
the gradients of $t$ and of $g-t^d$ be linearly dependent at $p$,
which implies that the gradient of $g$ must be zero in $p$. As
$g(y_1,\ldots,y_{n-1},1)=f(x_1,\ldots,x_n)/x_n^d$ the chain rule implies
that $\frac{\del (f(x_1,\ldots,x_n)/x_n^d)}{\del x_i}=\sum_{j=1}^{n-1} \frac{\del g}{\del y_j}
\cdot\frac{\del y_j}{\del x_i}$. If $\ell\not =n$ this states that
$\frac{\del f}{\del x_\ell}(y_1,\ldots,y_{n-1},1) = \frac{\del g}{\del
  y_\ell}$ while for $\ell=n$ it states that
$(-d)x_n^{-d}f(x_1,\ldots,x_n)+x_n^{-d+1}f_n(x_1,\ldots,x_n)=-\sum_{j=1}^{n-1}
y_j\cdot g_j(y_1,\ldots,y_{n-1})$.

As the gradient of $g$ vanishes at $p$, so do all derivatives of
$g$. By the chain rule above, each $f_j$, $1\le j\le n-1$, is zero in
$p$. By the chain rule for the $x_n$-derivative above, since $f$
vanishes at $p$ but $x_n$ does not, we conclude that $f_n$ is zero at
$p$ as well. In other words, $p$ is in $\Var(\Jac(f),z)$. By contradiction, the
NSNC-locus of $F$ is contained in $\Var(z,\Jac(f))$.

Conversely, take $p\in\Var(z,\Jac(f))\subseteq\Var(z,z^d-f)$ on the chart
$x_n\not=0$. Then the gradient of $g$ is zero at $p$ and so $p$ is an
NSNC-point of $F$ as $(g=t^d)$ and $(t=0)$ do not meet normally at $p$.

\stp{1}{From $R_n/\Jac(f)$ to $\Omega^{n-1}_{\PP^n}(\log F)$.} 
%%%%%%%%%%%%%%%%%%%%%%%%%%%%%%%%%%%%%%%%%%%%%%%%%%%%%%%%%

As
$R_n$-module, $S_n/\Jac(F)$ is generated by the cosets of
$z^0,\ldots,z^{d-1}$, and these are 
$R_n$-independent:
\[
S_n/\Jac(F)\cong \bigoplus_{i=0}^{d-1} R_n\cdot(z^i\modulo \Jac(F)).
\]
Observe that the $R_n$-annihilator of $(z^i\modulo\Jac(F))$ 
is $\Jac(f)$ if $1\le
i\le d-1$, and that the $R_n$-annihilator of
$(1\modulo\Jac(F))$ is $f\cdot \Jac(f)$.  Since $z$ is nilpotent on
$S_n/\Jac(F)$, the concepts of $\frakm$-torsion and $\frakn$-torsion
agree on this module. Then, as graded $R_n$-modules,
\[
H:=H^0_\frakn(S_n/\Jac(F))=\bigoplus_{i=1}^{d-1}H^0_\frakm(R_n/\Jac(f))(-i)\oplus
H^0_\frakm(R_n/\Jac(f))(-d)
\]
since %$\frakm$ contains $R_n/f\cdot R_n$-regular elements and so 
$H^0_\frakm(R_n/f\cdot\Jac(f))=H^0_\frakm(f\cdot R_n/f\cdot\Jac(f))
=H^0_\frakm(R_n/\Jac(f))(-d)
$.

\medskip

Now let $Z\subseteq \bigoplus_{i=0}^nS_n\cdot e_i$ be the syzygy module
on the partial derivatives $F_z=F_0,F_1,\ldots,F_n$ where $F_i=\del
F/\del x_i$. It inherits a
natural grading via $\deg(a_ie_i)=\deg(a_i)+d$, since
$\deg(F_i)=d$.  The start of the minimal $S_n$-graded resolution of
$S_n/\Jac(F)$ is hence
\[
0\to Z\to S_n(-d)^{n+1}\to S_n\to S_n/\Jac(F)\to 0.
\] 
It follows from the long exact sequence of local cohomology that, as
graded modules, $ H=H^0_\frakn(S_n/\Jac(F))\cong H^1_\frakn(\Jac(F))
\cong H^2_\frakn(Z).  $

The module $\Der_{S_n}(-\log_0 F)$ of derivations
annihilating $F$  inherits a natural
grading via $\deg(\sum_{i=0}^na_i\del_i)= \deg(a_i)-1$.  
As graded
modules, $\Der_{S_n}(-\log_0 F)\cong Z(d+1)$. Thus,
\[
H^2_\frakn(Z)\cong H^2_\frakn(\Der_{S_n}(-\log_0 F))(-d-1).
\]

Since $F$ is homogeneous, the
Euler derivation $E=\sum_{i=0}^n x_i\del_i$ is in
$\Der_{S_n}(-\log F)$, and there is a splitting
$S_n\cdot E\oplus \Der_{S_n}(-\log_0 F)\cong
\Der_{S_n}(-\log F)$.
%on $\CC^{n+1}$, sending a derivation $\theta$ to
%$\theta-\theta\bullet(F)E/(d+1)\in \Der_{S_n,0}(-\log F)$.  
Since
$S_n\cdot E$ is free and $n\geq 2$, the splitting gives
\[
H^2_\frakn(\Der_{S_n}(-\log_0 F))(-d-1)\cong
H^2_\frakn(\Der_{S_n}(-\log F))(-d-1).
\]

The differential forms $\Omega_{S_n}^\bullet$ on $\CC^{n+1}$ form a
graded algebra with differential $\de\colon
\Omega^0_{S_n}=S_n\to\Omega^1_{S_n}$ homogeneous of degree zero.  The
modules $\Omega_{S_n}^\bullet(\log F)$ inherit a natural grading from
being a submodule of $\frac{1}{F}\cdot\Omega_{S_n}^\bullet\cong
\Omega^\bullet_{S_n}(d+1)$, and as graded modules $\Der_{S_n}(-\log
F)\cong \Omega^n_{S_n}(\log F)(n-d)$, cf.~Remark
\ref{rmk-Cf}.\eqref{rmk-Cf-fields-diffs}.
%\[
%\sum_{i=0}^na_i\del_i\leftrightarrow \sum_{i=0}^n
%(-1)^ia_i\frac{\widehat{dx_i}}{F}
%\]
%given by the contraction against the standard form $\de x/F$.

The module $\Omega^{n-1}_{S_n}(\log_E F)\cong \Omega^n_{S_n}(\log_0
F)$ is the contraction of $\Omega_{S_n}^n(\log F)$ with $E$; this
surjection is dual to the inclusion $\Der_{S_n}(-\log_0 F)\into
\Der_{S_n}(-\log F)$ under the above identification. Hence there is an
induced graded isomorphism
\[
\Der_{S_n}(-\log_0 F)\cong \Omega_{S_n}^{n-1}(\log_E F)(n-d)
\] 
%given by $\sum_{i=0}^na_i\del_i\leftrightarrow \sum_{i=0}^n
%(-1)^{i+j}a_ix_j\frac{\widehat{\de x_{i,j}}}{F}$ 
sending $\delta\mapsto
\delta\diamond(E\diamond(\de x/F))$, compare \cite{DenhamSchulze}.

Pasting together all previous identifications, 
\begin{eqnarray*}
H^2_\frakn(\Omega^{n-1}_{S_n}(\log_E F))
&\cong&H^2_\frakn(\Der_{S_n}(-\log_0 F))(d-n)\\
&\cong&H^2_\frakn(Z)(2d -n+1)\\
&\cong&H(2d-n+1)\\
&\cong&\bigoplus_{i=1}^{d}H^0_\frakm(R_n/\Jac(f))(d-n+i).
\end{eqnarray*}

\medskip

Let $\tilde \Omega^{n-1}_{S_n}(\log_E F)$ be the sheaf on $\PP^n$
induced by $\Omega^{n-1}_{S_n}(\log_E F)$. By \cite[Proposition
  2.11]{DenhamSchulze} (valid for general homogeneous divisors),
$\tilde \Omega^{n-1}_{S_n}(\log_E F)$ is the sheaf
$\Omega^{n-1}_{\PP^n}(\log F)$ of $(n-1)$-forms on $\PP^n$ that are
logarithmic in the sense of K.~Saito along $Y=\Proj(S/F)$ on all charts.  The
Grothendieck--Serre correspondence
\cite[Thm.~20.4.4]{BrodmannSharp-LC} yields then a
%There is a natural identification
%$H^2_\frakn(\Omega^{n-1}_{S_n,0}(\log F) )\cong \bigoplus_{t\in\ZZ}
%H^1(\PP^n,\tilde\Omega^{n-1}_{S_n,0}(\log F)(t))$. We thus arrive at a
graded $S_n$-module isomorphism
\[
H^2_\frakn(\Omega^{n-1}_{S_n}(\log_E F)) \cong \bigoplus_{t\in\ZZ}
H^1(\PP^n,\Omega^{n-1}_{\PP^n}(\log F)(t)).
\]

In particular, 
\begin{eqnarray*}
\bigoplus_{i=1}^d\left[H^0_\frakm(R/\Jac(f))\right]_{d-n+i}&=&
\left[\bigoplus_{i=1}^d H^0_\frakm(R/\Jac(f))(d-n+i)\right]_0\\
&=&
\left[H^2_\frakn(\Omega^{n-1}_{S_n}\log_E F)\right]_0=
H^1(\PP^n,\tilde\Omega^{n-1}_{S_n}(\log_E F))\\
&=&
H^1(\PP^n,\Omega_{\PP^n}^{n-1}(\log F)).
\end{eqnarray*}

\stp{2}{The Leray spectral sequence.}
%%%%%%%%%%%%%%%%%%%%%%%%%%%%%%%%%%%%

Let $\pi\colon (\PP,Y')\to(\PP^{n},Y)$ be an embedded resolution of
singularities, with $\PP$ smooth and $F':=\pi^*(F)$ having simple normal
crossings.  Since the singularities of $F$ are inside $\Var(z)$, one
can arrange that $\PP$ and $\PP^n$ agree where $z\neq 0$.  Indeed,
functoriality of the resolution process implies that one only needs to
resolve the NSNC-locus of $\Proj(S_n/F)$.

Note that while $F$ is reduced, this is not so for $F'$. 
Let
$F'_\red$ define the reduced divisor. 
Deligne's logarithmic sheaves $\Omega^i_\PP(Y')$ consider
reduced underlying schemes and are our
sheaves $\Omega^i_\PP(\log(F'_\red))$, usually properly contained in
$\Omega^i_\PP(\log(F'))$.

The cokernel $Q$ of the inclusion $\pi_*(\Omega^{n-1}_\PP(
\log F'_\red))\subseteq \pi_*(\Omega^{n-1}_\PP(\log F'))$ is supported in the
NSNC-locus of $F$. If this locus is of dimension zero,
$H^{>0}(\PP^n,Q)=0$ and so there is a natural surjection
\begin{gather}\label{eqn-diffs}
H^1(\PP^n,\pi_*(\Omega_{\PP}^{n-1}(\log F'_\red)))\onto
H^1(\PP^n,\pi_*(\Omega_{\PP}^{n-1}(\log F')))
\end{gather}
(and the higher cohomology groups agree).

On the other hand, 
%one can choose a
%resolution that respects the logarithmic structure. Then
the Leray spectral sequence
induces a natural embedding
%\[
%\bigoplus_{t\in\ZZ} H^1(\PP^n,\Omega_{\PP^n}^{n-1}(\log
%Y)(t))\into \bigoplus_{t\in\ZZ} H^1(\PP,\Omega_\PP^{n-1}(\log
%Y')(t)),
%\]
\[
H^1(\PP^n,\pi_*(\Omega_{\PP}^{n-1}(\log F'))\into
H^1(\PP,\Omega_\PP^{n-1}(\log F')).
\]
By Lemma \ref{lem-res-omega}, $\pi_*(\Omega^{n-1}_\PP(\log
F'))=\Omega^{n-1}_{\PP^n}(\log F)$ and so we have a natural diagram
\begin{eqnarray*}
H^1(\PP^n,\Omega_{\PP^n}^{n-1}(\log F))=
H^1(\PP^n,\pi_*(\Omega_{\PP}^{n-1}(\log F')))
\into 
&H^1(\PP,\Omega_\PP^{n-1}(\log F'))\\
&{\mathrel{\rotatebox{90}{$\twoheadrightarrow$}}}\\
&H^1(\PP,\Omega_\PP^{n-1}(\log F'_\red))
\end{eqnarray*}
as long as $H^1(\PP^n,Q)=0$.  In particular, the dimension of
$H^1(\PP^n,\Omega_{\PP^n}^{n-1}(\log F))$ is bounded above by the
dimension of
$H^1(\PP,\Omega_\PP^{n-1}(\log F'_\red))=H^1(\PP,\Omega_\PP^{n-1}(Y'))$.

\stp{3}{Matching with Deligne's results.}
%%%%%%%%%%%%%%%%%%%%%%%%%%%%%%%%%%%%%%%

Consider for the moment a smooth projective variety $W$ with a reduced
simple normal crossing divisor $V=\bigcup V_i$ and a distinguished
component $V_0$. %(such that $V-V_0$ has no support along$(V_0)_\red$)
Then, by \cite[Properties~2.3]{EsnaultViehweg} there is a short exact
sequence of sheaves
%\[
%0\to \ShOmega^i_W(\log(V)(-V_0))\to \ShOmega^i_W(\log (V-V_0))\to 
%\ShOmega^{i}_{V_0}(\log(V-V_0)|_{V_0})\to 0,
%\]
%and 
\[
0\to \Omega^i_W(\log(V-V_0))\to \Omega^i_W(\log (V))\to 
\Omega^{i-1}_{V_0}(\log(V- V_0)|_{V_0})\to 0.
\]
With $i=n-1$, it induces a long exact sequence
\begin{gather}
\label{seq-4term}
\ldots\to H^1(W,\Omega^{n-1}_W(\log(V- V_0)))\to 
H^1(W,\Omega^{n-1}_W(\log (V)))\to\hspace{3cm}\\\nonumber
\hspace{3cm}\to H^1(V_0,\Omega^{n-2}_{V_0}(\log(V- V_0)|_{V_0}))\to
H^2(W,\Omega^{n-1}_W(\log(V- V_0)))\to\ldots
\end{gather}

Deligne's theory \cite{Deligne-HodgeII,Deligne-HodgeIII} implies that  
\begin{eqnarray*}
H^i(W,\Omega^j_W(\log
(V-V_0)))&\cong&\gr^{{\rm Hodge}}_jH^{i+j}_{\dR}(W\minus (V-V_0));\\ 
H^i(W,\Omega^j_W(\log
V))&\cong&\gr^{{\rm Hodge}}_jH^{i+j}_{\dR}(W\minus V);\\ 
H^i(V_0,\Omega^j_{V_0}(\log
(V-V_0)|_{V_0}))&\cong&\gr^{{\rm Hodge}}_jH^{i+j}_{\dR}(V_0\minus (V-V_0)). 
\end{eqnarray*}
Here, $V-V_0$ is the difference of divisors, $W\minus V$ the
set-theoretic difference.

The situation we are interested in is when $W=\PP$, $V=Y'$, and
$V_0$ is the strict transform of $\Div(f-z^d)$. 
In that case, $W\minus(V-V_0)$ is affine space
$\CC^n=\PP^n\minus\Var(z)$ while
 $V_0\minus(V-V_0)$ is the smoothly compactified Milnor fiber
minus the part at infinity, hence exactly $M_{f,0}$.
In the 4-term exact sequence \eqref{seq-4term}, the left- and
right-most terms are zero as $\CC^n$ is contractible; we obtain a
monomorphism $H^1(\PP,\Omega^{n-1}_\PP(\log F'_\red)) \into
\gr^{{\rm Hodge}}_{n-2}H^{n-1}(M_{f,0},\CC)$.

Collecting the results from each step, one obtains
\begin{eqnarray*}
\bigoplus_{i=1}^d\left[H^0_\frakm(R_n/\Jac(f))\right]_{d-n+i} =
H^1(\PP^n,\Omega_{\PP^n}^{n-1}(\log F))
\into 
&H^1(\PP,\Omega_\PP^{n-1}(\log F'))\\
&{\mathrel{\rotatebox{90}{$\twoheadrightarrow$}}}\\
&H^1(\PP,\Omega_\PP^{n-1}(\log F'_\red))\\
&{\mathrel{\rotatebox{270}{$\into$}}}\\
&\gr^{{\rm Hodge}}_{n-2}(H^{n-1}(M_{f,0},\CC)).
\end{eqnarray*}

\stp{4}{Monodromy.}
%%%%%%%%%%%%%%%%%%%

The monodromy isomorphism on $H^i(M_{f,0})$ is induced by the
geometric monodromy $x_i\to \omega_d x_i$ where $\omega_d$ is a
primitive $d$-th root of unity. One extends it to a graded
automorphism of $S_n$ that fixes $\Var(F)$. Thus, the grading and the
geometric monodromy extend to the pair $(\PP^n,Y)$ and functoriality
of resolution of singularities \cite{jarek} guarantees an extension to
$\pi$. In particular, all morphisms in the above display are graded
and monodromy-equivariant (apart from the combined effect of
identification of logarithmic derivations with logarithmic
differentials and the residue map, corresponding to a twist by
$(\omega_d)^n$). Since the graded components of the module
$H^0_\frakn(S_n/\Jac(F))$ are $\omega_d$-eigenspaces, the claim follows.
\end{proof}

\begin{rmk}
\begin{asparaenum}
\item If $n=3$, the condition on the singular locus is automatic.
\item The current form of Theorem \ref{thm-jac-milnor} evolved from a
  previous one through
  helpful comments and criticisms by M.~Saito. He has produced a similar proof
  that is based on cyclic covers rather than the closure of the Milnor
  fiber in $\PP^n$, see the appendix of \cite{Saito-HilbertBernstein2015}.
\item If the natural map \eqref{eqn-diffs} is an isomorphism, one
  obtains a natural map $[H^0_\frakm(R_n/\Jac(f))]_{d-n+i}\into 
\gr^{{\rm Hodge}}_{n-2}(H^{n-1}(M_f,\CC)_\lambda)$. There seems to be no natural
lift to $H^{n-1}(M_f,\CC)_\lambda$, as was pointed out by M.~Saito.
\item Nonzero elements in $H^{n-1}(M_f,\CC)$ can be related to roots
  of $b_f(s)$ under certain circumstances, see for example
  \cite{Saito-Bull94,Saito-bfu,W-Bernstein}. M.~Saito has produced
  examples that show that there is no obvious relation between
  $b_f(s)$ and $H^0_\frakn(S_n/\Jac(F))$, even if \eqref{eqn-diffs} is
  an isomorphism, see \cite{Saito-HilbertBernstein2015}.\schluss
\end{asparaenum}
\end{rmk}

%%%%%%%%%%%%%%%%%%%%%%%%%%%%%%%%%%%%%%%%%%%%%%%%%%%%%
\section{Hyperplane arrangements}
%%%%%%%%%%%%%%%%%%%%%%%%%%%%%%%%%%%%%%%%%%%%%%%%%%%%%

In this section we consider hyperplane arrangements 
$\calA\subseteq \CC^n=:X$ defined by 
\[
f_\calA=\prod_1^d L_i\in R_n:=\CC[x], 
\]
where the $L_i$ are (not necessarily homogeneous and not necessarily
distinct) polynomials of degree one. Arrangements form an interesting
class of strongly Euler-homogeneous and Saito-holonomic divisors.  The
canonical Whitney stratification, the logarithmic stratification, and
the stratification by multiplicity all agree for arrangements.

Let $D_n$ be the $n$-th Weyl algebra
$R_n\langle\del\rangle$ and set as before
\[
E=\sum_{i=1}^n x_i\del_i\in D_n=R_n\langle\del\rangle.
\] 
%the usual Euler operator in the Weyl algebra.

\subsection{The differential annihilator}
%%%%%%%%%%%%%%%%%%%%%%%%%%%%%%%%%%%%%%%%%%%

Around the time \cite{Terao-JAlg02} was published, Terao conjectured
that $\ann_{D_n}(1/f_\calA)$ be generated by operators of order one
for every reduced hyperplane arrangement.  In
\cite{CastroGagoHartilloUcha-Revista07} it is shown that for locally
weakly quasi-homogeneous free polynomials $f$ (which includes free
arrangements), $\ann_{D_n}(1/f^k)$ is generated by operators of order
one for
$k\gg 0$, compare also \cite[Rem.~1.7.4]{Narvaez-Contemp08}.  Terao's
conjecture was affirmed for generic arrangements by Torrelli, who also
raised the corresponding question about the generic annihilator
$\ann_{D_n}(f_\calA^s)$ \cite{Torrelli-BullMathSoc04}. Related results
are contained in \cite{Holm-CiA04}.

We prove here that Terao's
original conjecture is correct. Moreover, we confirm the generic 
annihilator conjecture in a large class of examples, but disprove
it in general.
\begin{dfn}
  A \emph{central} arrangement is an arrangement defined by a polynomial
  $f_\calA$ that is homogeneous in the standard sense.
\end{dfn}

The following is a direct consequence of Theorem \ref{thm-tame-annfs}:
\begin{thm}\label{thm-generic-Terao}
  If the arrangement $\calA\subseteq \CC^n$ is tame then
  $\ann_{D_n}(f_\calA^s)$ is generated by operators of order one.  \qed
\end{thm}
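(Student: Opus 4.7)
The plan is simply to verify the three hypotheses of Theorem \ref{thm-tame-annfs} for an arbitrary tame arrangement $\calA$ and then invoke that theorem in the algebraic category, which is legitimate because $X=\CC^n$ is a smooth $\CC$-scheme. Tameness of $f_\calA$ is by assumption, so only strong Euler-homogeneity and Saito-holonomicity need to be established.

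For strong Euler-homogeneity at a point $\x\in\Var(f_\calA)$, partition the linear factors of $f_\calA$ into those $L_i$ vanishing at $\x$ and those nonzero at $\x$: write $f_\calA=g\cdot u$ where $g=\prod_{L_i(\x)=0}L_i$ and $u=\prod_{L_i(\x)\ne 0}L_i$. After translating $\x$ to the origin, the factor $g$ becomes a product of linear forms in the shifted coordinates, i.e.\ a central arrangement, which is annihilated by $\deg(g)$ under the translated Euler field $E_\x=\sum_i(x_i-\x_i)\del_i$; this field vanishes at $\x$, so it is a strong Euler field for $g$. Since $u$ is a local unit at $\x$, Remark~\ref{rmk-Ehom} shows that strong Euler-homogeneity passes from $g$ to $gu=f_\calA$, proving strong Euler-homogeneity at $\x$. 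For Saito-holonomicity, one uses that the stratification of $X$ by the flats of the intersection lattice of $\calA$ (and its restriction to each flat) is a locally finite stratification whose strata are unions of logarithmic strata, so the logarithmic stratification is locally finite as well; this is noted in the introductory paragraph of Section~5.

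With all three hypotheses verified, Theorem~\ref{thm-tame-annfs} applies to $f_\calA$ and yields, at every $\x\in X$, that $\ann_{\calD_{X,\x}[s]}(f_\calA^s)$ is generated by $\Der_{X,\x}(-\log_0 f_\calA)$ together with an Euler operator $E_\x-s$, all of which are of order one. The final sentence of Theorem~\ref{thm-tame-annfs} transfers this conclusion to the algebraic category, giving that $\ann_{D_n}(f_\calA^s)$ is generated by operators of order one. There is no serious obstacle here beyond recording the two structural facts about arrangements; the real work was done in establishing Theorem~\ref{thm-tame-annfs}.
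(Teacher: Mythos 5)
Your proposal is correct and follows the paper's own route: the paper states the result with an immediate \qed, treating it as a direct consequence of Theorem~\ref{thm-tame-annfs} after asserting (in the preamble to Section~5, without proof) that arrangements are strongly Euler-homogeneous and Saito-holonomic, and you simply supply the straightforward verifications that the paper leaves implicit. One small wording slip in the Saito-holonomicity step: knowing that the flat stratification is locally finite and that its strata are unions of logarithmic strata does not, by itself, give local finiteness of the logarithmic stratification (a single coarse stratum could in principle decompose into infinitely many logarithmic ones); what one needs --- and what the paper asserts --- is that for arrangements the flat stratification actually \emph{coincides} with the logarithmic stratification, which then inherits local finiteness.
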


This result implies immediately the validity of Terao's conjecture for
tame arrangements. Indeed, we can assume that $f_\calA$ is central of
degree $d$, with a global Euler-homogeneity $E$. Then
$\ann_{D_n[s]}(f_\calA^s)$ is (globally) generated by operators of
order one, namely by $E-ds$ and $\Der_{\CC^n}(-\log_0 \calA)$. The only
integral root of the Bernstein--Sato polynomial of an arrangement is
$-1$ by \cite{W-Bernstein}. Consider the evaluation morphism
$\ann_{D_n[s]}(f^s_\calA)\to \ann_{D_n}(1/f_\calA)$ that sends $s$ to
$-1$. By \cite[Prop.~6.2]{Kashiwara-bfu}, this morphism is surjective,
hence the target is generated by order one operators. We show next that
tameness is not required: Terao's conjecture holds in fact in full generality.

\begin{thm}\label{thm-terao}
The differential annihilator $\ann_D(1/f_\calA)$ is generated by operators
of order one.
\end{thm}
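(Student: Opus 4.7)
The plan is to localize, induct on the intersection-lattice stratification to reduce to a central, essential arrangement analyzed at the origin, and then use Kashiwara's evaluation result together with a local cohomology computation to sidestep the tameness hypothesis of Theorem~\ref{thm-tame-annfs}.

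First, set $J_1\subseteq\ann_{D_n}(1/f_\calA)$ to be the left $D_n$-ideal generated by the order-one operators $\Der_{\CC^n}(-\log_0 f_\calA)$ and $E+d$ where $E$ is the Euler field and $d=\deg f_\calA$; the containment is immediate. The goal is $J_1=\ann_{D_n}(1/f_\calA)$. Arrangements are always strongly Euler-homogeneous and Saito-holonomic, and the claim is local, so after passing to the cone and removing the de Rham direction one may assume $\calA$ is central and essential, with deepest stratum $\{0\}$. At any $p\neq 0$ the local product splitting of Remark~\ref{rmk-Whitney-diffeo-product} combined with the strongly Euler-homogeneous version in Remark~\ref{rmk-Ehom}.\ref{rmk-splitting-strE} gives an analytic isomorphism $(X,\Var(f_\calA))_p\cong \CC^{\dim\sigma}\times(X'',\Var(f_{\calA''}))$, where $\calA''$ is the transversal arrangement at $p$ and has strictly fewer hyperplanes. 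Induction on $n$ (base case $n=1$ being trivial) applied to $\calA''$, combined with the fact that the property of being order-one generated is preserved by tensoring with a free $D$-module in the $\CC^{\dim\sigma}$ direction, yields $J_1=\ann_{D_n}(1/f_\calA)$ at $p$. Hence the cokernel $Q:=\ann_{D_n}(1/f_\calA)/J_1$ is supported at the origin, so $Q=H^0_\frakm(Q)$.

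It remains to show $Q=0$. Here I invoke that $-1$ is the unique integer root of the Bernstein--Sato polynomial $b_{f_\calA}(s)$ of any arrangement \cite{W-Bernstein}, so by \cite[Prop.~6.2]{Kashiwara-bfu} the evaluation $s\mapsto -1$ surjects $\ann_{D_n[s]}(f_\calA^s)$ onto $\ann_{D_n}(1/f_\calA)$, and $E-ds$ maps to $E+d\in J_1$. It therefore suffices to exhibit, for every class in $Q$, a lift in $\ann_{D_n[s]}(f_\calA^s)$ that is congruent modulo $(s+1)$ to an element of $D_n[s]\cdot(\Der_{\CC^n}(-\log_0 f_\calA), E-ds)$. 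I would do this by filtering by order on $D_n$, passing to symbols in $\gr_{(0,1)}(D_n)$, and realizing the obstruction as a class in $H^0_\frakm$ of $\gr_{(0,1)}(D_n)/\tilde\calL_{f_\calA}$; using the linearity of the defining forms of $\calA$ one extracts enough extra order-one syzygies among $\{f_i\}$ beyond those coming from $\Der(-\log_0 f_\calA)$ to annihilate each such class, and lifts these syzygies back to order-one operators in $\ann_{D_n[s]}(f_\calA^s)$ via the Euler structure.

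The hard part will be the final step: in the tame case Theorem~\ref{thm-Lf-CM} guarantees $\gr_{(0,1)}(D_n)/\calL_{f_\calA}$ is Cohen--Macaulay of dimension $n+1$, which automatically kills the relevant $H^0_\frakm$, but in the non-tame case (e.g.\ the bracelet arrangement of Example~\ref{exa-bracelet}) the symmetric algebra acquires embedded components at the origin and these must be controlled by hand. I expect the technical core to be a direct construction of order-one operators killing these embedded components, using that the partials $f_i$ of a product of linear forms are themselves products of linear forms and therefore admit many more linear syzygies than a generic Euler-homogeneous divisor would.
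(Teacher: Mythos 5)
Your proposal stalls at exactly the point where the real work begins, and the mechanism you sketch for that final step cannot work. The module $\Der_{\CC^n}(-\log_0 f_\calA)$ is, by definition, the full module of $R_n$-linear syzygies among the partials $f_1,\ldots,f_n$: an expression $\sum a_if_i=0$ is the same thing as a derivation $\sum a_i\del_i$ killing $f$. So there are no ``extra order-one syzygies among $\{f_i\}$ beyond those coming from $\Der(-\log_0 f_\calA)$'' to extract; the order-one part of $\ann_{D_n[s]}(f^s)$ is precisely $\CC[s]\otimes\Der(-\log_0 f)\oplus\calO[s](E-ds)$, cf.\ Corollary~\ref{cor-tildeLf}. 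Moreover, since Example~\ref{exa-bracelet} shows that $\ann_{D_n[s]}(f_\calA^s)$ genuinely fails to be generated by order-one operators in the non-tame case, any route through $\tilde\calL_{f_\calA}$ and its $H^0_\frakm$ cannot conclude by producing order-one lifts in $\ann_{D_n[s]}(f^s)$; some other structure must intervene, and you never supply it. You correctly identify (and honestly flag) that the hard part is unfinished; as written, it is a gap, not a proof.

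The paper's argument takes a different and entirely elementary route, inducting on the \emph{number of hyperplanes} $d$ rather than on dimension. After reducing to rank~$n$, it considers the \v Cech complex $\calL^\bullet$ of $D_n$-modules on the family $\{L_1,\ldots,L_d\}$ (computing $H^n_\frakm(R_n)\cong D_n/D_n\cdot x$ in the top spot) and the Koszul cocomplex $\calD^\bullet$ on $D_n$ given by \emph{right}-multiplication by $L_1,\ldots,L_d$; the evident map $\phi\colon\calD^\bullet\to\calL^\bullet$ is surjective because $-1$ is the only integer root of $b_{f_\calA}(s)$ (this is where Kashiwara's Prop.~6.2 enters, but at the level of every localization $R_n[1/L_I]$, not just $R_n[1/f]$). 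The kernel complex $\calK^\bullet$ has $\calK^d=\ann_{D_n}(1/f_\calA)$, and the long exact sequence, together with a short computation showing the connecting map from $H^{d-1}(\calD^\bullet)$ hits $H^{d-1}(\calL^\bullet)$, collapses to
\[
0\to\sum_{r} K_{L_{\hat r}}\!\cdot L_r\hookrightarrow \ann_{D_n}(1/f_\calA)\to D_n/D_n\cdot x\to 0.
\]
The left module is generated by order-one operators by induction on $d$ (each $K_{L_{\hat r}}$ is the annihilator of $1/L_{\hat r}$), and since $D_n/D_n\cdot x$ is a simple module, it only remains to exhibit one order-one element of $\ann_{D_n}(1/f_\calA)$ not lying in $D_n\cdot x$. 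That element is the Euler operator $E+d=\sum\del_ix_i-n+d$, which lies outside $D_n\cdot x$ precisely because $d>n$. Your proposal never comes near this mechanism, and it is exactly this Euler-operator observation, together with the \v Cech/Koszul comparison, that lets the paper dispense with tameness.
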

\begin{proof}

If the rank $r$ of the arrangement is less than $n$ then a suitable change
of coordinates brings $f_\calA$ into the ring $\CC[x_1,\ldots,x_r]$. The
truth of the statement of the theorem then only depends on the
corresponding arrangement in $\CC^r$. 
Without loss of generality we hence may (and will) assume that $r=n$.

Before getting to the main part of the proof we need to set up some
notation.  Denote for a subset $I$ of $\{1,\ldots,d\}$ by $L_I$ the
factor $\prod_{i\in I} L_i$ of $f_\calA$.  Let
\[
\calL^\bullet\colon 0\to \underbrace{R_n}_{\text{degree } 0}\to
\underbrace{\bigoplus R_n[1/L_i]}_{\text{degree }1}\to
\cdots\to\underbrace{\bigoplus_{|I|=d-1}R_n[1/L_I]}_{\text{degree
  }d-1}\to \underbrace{R_n[1/f_\calA]}_{\text{degree }d}\to 0
\]
be the \v Cech complex attached to the family
$\{L_1,\ldots,L_d\}$. Since the rank of $\calA$ is $n$, the ideal
generated by $L_1,\ldots,L_d$ is the maximal graded ideal
$\frakm=R_n\cdot x$ where $x=(x_1,\ldots,x_n)$. By standard facts of
local cohomology (see \cite{24h}), the complex $\calL^\bullet$ has
exactly one cohomology group, positioned in cohomological degree $n$.

The \v Cech complex is a complex of $D_n$-modules since
its constituents are localizations of the tautological $D_n$-module
$R_n$. As such, its unique cohomology group is the direct image of the
structure sheaf of the origin under the map that embeds the origin in
$\CC^n$. In particular, $H^n(\calL^\bullet)$ is isomorphic to
$D_n/D_n\cdot x$. 

For $I\subseteq \{1,\ldots,n\}$, let $\eta_I$ be the element in
$\calL^{|I|}$ whose component in $R_n[1/L_I]$ is $1/L_I$, and which is
zero in all other components.  The coboundary map of $\calL^\bullet$
is just the signed sum of all inclusions $R_n[1/L_I]\into
R_n[1/L_{I\cup\{j\}}]$. It thus sends $\eta_I$ to $\sum
(-1)^{\sgn(I,j)}L_j\cdot\eta_{I\cup \{j\}}$ where the sum runs over
elements $j$ in $\{1,\ldots,d\}\smallsetminus I$ and where $\sgn(I,j)$
is $-1$ raised to the number of elements of $I$ that are greater than
$j$.

Now let $\calD^\bullet$ be the Koszul cocomplex associated to
right-multiplication by the elements $L_1,\ldots,L_d$ on $D_n$. The
module $\calD^k$ is the direct sum of copies $D_n\cdot e_I$ of $D_n$ where
$I$ runs over the subsets of $\{1,\ldots,d\}$ of size $k$ and $e_I$ is
simply a symbol. With the right choices, the coboundary map in
$\calD^\bullet$ sends $1\cdot e_I$ in $\calD^{k}$ to $\sum
(-1)^{\sgn(I,j)}L_j\cdot e_{I\cup\{j\}}$ in $\calD^{k+1}$.

There is a morphism of complexes $\phi\colon \calD^\bullet\to
\calL^\bullet$ that sends $e_I$ to $\eta_I$. Since $-1$ is the only
integral root of the Bernstein--Sato polynomial of an arrangement,
$\eta_I$ generates $R_n[1/L_I]$ over $D_n$ and so $\phi$ is
surjective. For each index set $I$ denote by $K_I$ the kernel of the map
$\phi_I\colon D_n\cdot e_I\to D_n\bullet \eta_I=R_n[1/L_I]$. Then there are
induced maps $K_I\to K_{I\cup\{j\}}$ via right-multiplication by $L_j$
for each 
$j\in\{1,\ldots,d\}\smallsetminus I$ and thus an induced short exact
sequence
\[
0\to \calK^\bullet\to \calD^\bullet\to \calL^\bullet\to 0
\]
of complexes, where $K_I$ becomes the $I$-th constituent of
$\calK^t=\bigoplus_{|I|=t}K_I$.

We now start the actual proof of the theorem.
Since the rank of our arrangement is $n$, for
$d=n$ we are looking at a Boolean arrangement and in that case the
theorem is easy to check. Assume now that $d>n$. The long exact
sequence of cohomology attached to the above short exact sequence of
complexes ends with
\begin{gather}\label{eqn-les}
H^{d-1}(\calD^\bullet)\stackrel{\alpha}{\to} H^{d-1}(\calL^\bullet)\to
H^d(\calK^\bullet)\stackrel{\beta}{\to}
\underbrace{H^d(\calD^\bullet)}_{=D_n/D_n\cdot x}\to H^d(\calL^\bullet)=0,
\end{gather}
compare the second paragraph of the proof.

Our first contention regarding this sequence is that $\alpha$ is
surjective. When $d-1>n$ then this is automatic as then the target
module vanishes. In the case $d-1=n$ denote $L_{\hat r}$ the
polynomial $f_\calA/L_r$ and observe that the kernel of $\calL^{d-1}\to
\calL^d$ consists of the cochains $(g_1/L_{\hat
  1}^k,\ldots,g_d/L_{\hat d}^k)\in\calL^{d-1}$ for which $\sum
(-1)^jg_jL^k_j=0$. Since $d-1=n$ equals the rank of the arrangement,
there is an (up to scaling) unique $\CC$-linear syzygy $\sum (-1)^rc_r
L_r=0$, $c_r\in\CC$, between $L_1,\ldots,L_d$. The corresponding
element $c_L:=\oplus c_r/L_{\hat r}\in \calL^{d-1}$ is a cochain and
at least one of the polynomials $L_{\hat r}$ with $c_r\not=0$ is the
product of $n$ linearly independent linear forms. We briefly consider
the \v Cech complex on the forms
$L_1,\ldots,L_{r-1},L_{r+1},\ldots,L_d$ generating the ideal
$\frakm$. The $n$-th cohomology group of this complex is
$H^n_{\frakm}(R_n)=D_n/D_n\cdot x$ and generated over $D_n$ by $c_r/L_{\hat
  r}$. It follows that $c_r/L_{\hat r}$ is not a sum of fractions
where each denominator is a proper factor of $L_{\hat r}$. Returning
to $\calL^\bullet$, the cochain $c_L=\oplus c_r/L_{\hat r}\in
\calL^{d-1}$ cannot be in the image of $\calL^{d-2}\to\calL^{d-1}$ as
follows by looking at the $\hat r$-component. We have thus identified
a nonzero element $c_L$ in $H^{d-1}(\calL^\bullet)$; since this module
is isomorphic to $D_n/D_n\cdot x$ it is a simple $D_n$-module and thus $c_L$
generates it. The claim on the surjectivity of $\alpha$ then follows
since the Koszul cochain $\oplus c_i \cdot e_{\hat r}\in
\calD^{d-1}$ maps to $c_L$ under $\alpha$ and to $0$ under the
coboundary.
 
We now return to our initial assumption $d>n$ but otherwise arbitrary,
and note that, since $\alpha$ is surjective, the exact sequence
\eqref{eqn-les} simplifies to a short exact sequence
\[
0\to \image(\calK^{d-1}\to\calK^{d})\into \calK^d\to D_n/D_n\cdot x\to 0.
\]
Since the map $(\calK^{d-1}\to\calK^{d})$ is induced from
$(\calD^{d-1}\to\calD^{d})$, it follows that
$\image(\calK^{d-1}\to\calK^{d})$ is simply the ideal sum $\sum
K_{L_{\hat r}}\cdot L_r$ in $D_n$. By induction, the ideals
$K_{L_{\hat r}}\cdot L_r\subseteq D_n\cdot x$ are
generated by operators of order one. Since the cokernel $D_n/D_n\cdot
x$ of the inclusion is simple, the proof of the theorem will be
complete if we can exhibit an order one differential operator in
$\calK^d$ that is not in $D_n\cdot x$. Such operator is given by the
Euler operator $\sum x_r\del_r+d=\sum \del_r x_r-n+d$ since we are
assuming $d>n$ and so in particular $-n+d\not =0$.
\end{proof}

%Question: same proof for all $f^\alpha$ that are minimal broots mod ZZ?

%%%%%%%%%%%%%%%%%%%%%%%%%%%%%%%%%%%%%%
\subsection{Logarithmic $b$-functions}

Local systems on the open complement are an important tool for the
study of arrangements, as they connect to cohomology of Milnor fibers
(and other covers) and resonance varieties. One way to view a local
system is as a $D_n$-module of the form $D_n\cdot f^\alpha$,
$\alpha\in\CC$.  It is therefore of interest to know the structure of
such modules. 

For all locally quasi-homogeneous free divisors $f\in R_n$, the
$D_n$-annihilator of the section $1/f$ of the $D_n$-module $R_n[1/f]$
is at any point generated by derivations, see
\cite{CastroUcha-Steklov02}. The question how this compares to
$\ann_{D_n}(f^s)$ being generated by order one operators is an
interesting one.
A related property is that $b_f(s)$ have
no negative integer roots outside $s=-1$, as is the case for
hyperplane arrangements, \cite[Thm.~5.1]{W-Bernstein}. It turns out,
this condition is not sufficient, even for arrangements, see Example
\ref{exa-bracelet} below. Nonetheless, the derivations are never too
far from the annihilator for an arbitrary arrangement as we show now.

\begin{lem}
For any arrangement $\calA$ there is a finite list $Q$ of rational
numbers such that if $\alpha-\NN$ is disjoint to $Q$ then
$\ann_{D_n}(f^{\alpha}_\calA)$ is generated by derivations.
\end{lem}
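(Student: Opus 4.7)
The plan is to compare $\ann_{D_n}(f_\calA^\alpha)$ with the specialization at $s = \alpha$ of the parametric annihilator $I := \ann_{D_n[s]}(f_\calA^s)$, and to use Theorem~\ref{thm-terao} applied to the multi-arrangements $f_\calA^k$ ($k\in\NN_{\geq 1}$) to force the two to coincide modulo a finite set of rational exceptions. After the standard rank reduction I take $f := f_\calA$ to be central of degree $d$ with the standard Euler derivation $E$ satisfying $E(f) = df$. Let $J$ be the left $D_n[s]$-ideal generated by the order-one elements of $I$, namely $\Der(-\log_0 f)$ together with $E - ds$, and set $K := I/J$. The task reduces to exhibiting a nonzero $p(s)\in\CC[s]$ with $p(s)K = 0$.

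For each sufficiently large $k \in \NN_{\geq 1}$, Theorem~\ref{thm-terao} applied to the multi-arrangement $f^k$ gives that $\ann_{D_n}(f^{-k})$ is generated by operators of order one. The order-one elements of this annihilator form the $D_n$-ideal $D_n\cdot(\Der(-\log_0 f),\, E + kd) = \overline{J}_{-k}$, where the bar denotes specialization $s\mapsto -k$; and Kashiwara's criterion (satisfied once $k$ avoids a finite set of values controlled by the roots of $b_f$) yields $\overline{I}_{-k} = \ann_{D_n}(f^{-k})$. Hence $\overline{I}_{-k} = \overline{J}_{-k}$ for all but finitely many $k\in\NN_{\geq 1}$. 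Since $D_n[s]/I = D_n[s]\cdot f^s$ embeds in the $\CC[s]$-free module $R_n[1/f][s]\cdot f^s$ and is therefore $\CC[s]$-torsion-free, a diagram chase upgrades each such equality to $I \subseteq J + (s+k)I$, i.e.\ $(s+k)K = K$, for infinitely many $k$. Endowing $K$ with the order filtration inherited from $D_n[s]$, the associated graded $\gr K$ is a finitely generated module over the commutative polynomial ring $R_n[y_1,\ldots,y_n,s]$; a Nakayama argument then forces the image in $\Spec\CC[s]$ of the support of $\gr K$ to avoid this infinite set of integers. Since this image is constructible (Chevalley), it is finite, and from it I extract the required polynomial $p(s)$.

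Set $Q := \{\text{roots of } p(s)\} \cup \{\text{roots of } b_f(s)\} \subseteq \QQ$, a finite set. For any $\alpha \in \CC$ with $\alpha - \NN$ disjoint from $Q$, Kashiwara's theorem applies and gives $\ann_{D_n}(f^\alpha) = \overline{I}_\alpha$; and $p(\alpha)\ne 0$ together with $p(s) I \subseteq J$ forces $\overline{I}_\alpha = \overline{J}_\alpha$, which is manifestly generated by $\Der(-\log_0 f)$ and by $E-\alpha d$, all first-order operators. The main technical obstacle lies in the Nakayama step: transferring the surjectivity of $s+k$ from $K$ to $\gr K$ is nontrivial because, although $s+k$ is order-zero and so preserves the filtration, its surjectivity on $K$ does not obviously descend to the graded pieces. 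Overcoming this likely requires either refining the filtration (so that $s+k$-preimages of elements in $K_i$ can be found in $K_i$ itself, up to lower-order adjustments) or replacing the order filtration by a $V$-filtration adapted to $s=\infty$ on $D_n[s]$.
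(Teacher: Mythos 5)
Your reduction up to the surjectivity statement is correct and uses a genuinely different input than the paper: specialization at $s=-k$, Theorem~\ref{thm-terao} applied to the multi-arrangement $f^k$, the computation showing every order-one element of $\ann_{D_n}(f^{-k})$ lies in $\overline{J}_{-k}$, and the torsion-freeness of $D_n[s]/I$ to lift $\overline{I}_{-k}=\overline{J}_{-k}$ to $I=J+(s+k)I$. All of that is sound and clean, and indeed yields $(s+k)K=K$ for every integer $k\ge 2$.

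However, the step you flag as a ``technical obstacle'' is a genuine, unfixable-by-generalities gap. For a $D_n[s]$-module $K$ that is finitely generated over $D_n[s]$ but \emph{not} over $\CC[s]$, the implication ``$(s+k)K=K$ for infinitely many integers $k$ $\Rightarrow$ $\ann_{\CC[s]}(K)\neq 0$'' is simply false, and no rearrangement of filtrations (order filtration or $V$-filtration) will rescue it without additional structural input. Concretely, recall that since $E-ds\in J$, the $\CC[s]$-action on $K$ factors through $E/d$, so your question is equivalent to asking whether the Euler operator has a minimal polynomial on a certain finitely generated $D_n$-module. But take $M=R_n$ with the standard $E$-action: $(E+k)M=M$ for every integer $k\ge 1$ (since $E+k$ acts by the positive scalar $j+k$ on the degree-$j$ piece), yet $\CC[E]$ acts faithfully and $\ann_{\CC[E]}(M)=0$. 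So the finitely-many-exceptions-on-$\Spec\CC[s]$ conclusion you want does not follow from surjectivity of $s+k$ alone. What the paper supplies, and your argument is missing, is precisely the geometric finiteness that makes Nakayama-type reasoning legitimate: the inductive argument over the logarithmic stratification shows that after multiplication by the polynomial $\tilde b(s)$ coming from positive-dimensional strata, the remaining quotient is supported at the origin of $\CC^n$, hence $D_n$-holonomic with finite-dimensional socle; only then does the $E$-action on the socle have a minimal polynomial, producing the extra factor of $b_{\log\calA}(s)$. Without some replacement for that reduction to a point-supported module, your chain of implications stops one step short.
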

\begin{proof}
We shall show inductively the following claim, which implies the
lemma. We denote $\ann_{D_n[s]}^{(1)}(f_\calA^s)$ the ideal generated
by the operators of order one in $\ann_{D_n[s]}(f_\calA^s)$.
\begin{clm} For each
logarithmic stratum $\sigma$ of $\Var(f_\calA)$ there is a polynomial
$b^\sigma_{f_\calA}(s)$ such that
\[
\left(\prod_{\tau\supseteq\sigma}b^\tau_{f_\calA}(s)\right)
\cdot\ann_{D_n[s]}(f^s_\calA)\subseteq
\ann_{D_n[s]}^{(1)}(f^s_\calA)
%D_n(\Der_{X,0}(-\log f_\calA),E-ds)\quad
\text{ along $\sigma$.}
\]
\end{clm}
Granting this claim, the lemma follows with
$Q$ as the root set of $b_{f_\calA}(s)\cdot
\lcm_{\sigma}\left(b^\sigma_{f_\calA}(s)\right)$. Indeed, with $\alpha$ not a root
of this product, $\ann_{D_n}(f^s_\calA)$ evaluated at $s=\alpha$ is
inside $D_n\cdot\ann_{D_n}^{(1)}(f^\alpha)$. 
By \cite[Prop.~6.2]{Kashiwara-bfu}, if $\alpha-\NN$ is
disjoint to the root set of $b_{f_\calA}(s)$ then evaluating
$\ann_{D_n[s]}(f^s_\calA)=D_n[s](\ann_{D_n}(f^s_\calA),E-ds)$ at
$s=\alpha$ gives all of $\ann_{D_n}(f^\alpha_\calA)$ which is thus generated
by derivations.

The Claim is clear in the tame case.  By induction we can
assume that the claim holds along all positive-dimensional strata of
the logarithmic stratification of the arrangement. (During 
induction,  along $\sigma$, we may replace $f_\calA$ by
$\prod_{L_i(\sigma)=0}L_i$; this changes the defining
equation only by a unit). Let $\tilde b(s)$ be the least common
multiple of the polynomials $b^\sigma_{f_\calA}(s)$ where $\sigma$ is
positive-dimensional.  Then choose a zero-dimensional stratum $o$,
identify it with the origin of $\CC^n$, and let $E=\sum_{i=1}^n
x_i\del_i$ denote the corresponding Euler vector field.

Arguing locally, we can now assume that $f_\calA$ is central of degree
$d$ and that $\tilde
b(s)\frac{\ann_{D_n[s]}(f_\calA^s)}{\ann_{D_n[s]}^{(1)}(f_\calA^s)}$
is supported only at the origin. Since $s$ acts through the Euler
operator, it follows that this module is $D_n$-coherent and hence
holonomic. Let $\bar P$ be in the socle $\Sigma$ of this quotient
module; since the generators of $\ann_{D_n[s]}(f_\calA^s)$ and $
\ann_{D_n[s]}^{(1)}(f^s)$ can be taken to be homogeneous relative to
the grading $x\leadsto 1,\del\leadsto -1, s\leadsto 0$, we can assume
$P$ to be homogeneous. Then $\bar P$ is annihilated by $\tilde
E=\sum_{i=1}^n\del_i x_i$, while calculation reveals that also $\tilde
E\cdot \bar P=\bar P\cdot (ds+n-\deg(P))$. Since a $D_n$-module
supported at the origin is generated by its socle, $\tilde b(s)\cdot
\lcm_{\bar P\in \Sigma} (ds+n-\deg(P))$ multiplies
$\ann_{D_n}(f_\calA^s)$ into $\ann_{D_n[s]}^{(1)}(f_\calA^s)$. By
construction and Kashiwara's theorem, the roots of this polynomial are
in $\QQ$.
\end{proof}

\begin{rmk}
\begin{asparaenum}
\item The lemma and its proof is also correct for Saito-holonomic divisors
with local quasi-homogeneities $\sum a_ix_i\del_i$ that associate
non-vanishing local degrees to the defining equation.
\item Let $n=2$, $f=x^4+xy^4+y^5$. Then $\ann_{D_n[s]}(f^s)$ contains
  an operator of order two which is not multiplied into
  $\ann_{D_n[s]}^{(1)}(f^s)$ by any nonzero polynomial in $s$. Note
  that $f$ is not Euler-homogeneous.\schluss
\end{asparaenum} 
\end{rmk}

\begin{dfn}
For an arrangement $\calA$, we let $b_{\log\calA}(s)$ be the monic
minimal polynomial in $\QQ[s]$ that annihilates
$\frac{\ann_{D_n[s]}(f_\calA^s)}{\ann^{(1)}_{D_n[s]}(f_\calA^s)}$.
%=\frac{\ann_{D_n[s]}(f_\calA^s)}{D_n[s](\Der_{R_n}(-\log_0
%  f_\calA),E-ds)}$.
\end{dfn}

\begin{exa}[The bracelet]\label{exa-bracelet}
Consider the central arrangement $\calA$ in $X=\CC^4$ defined by
$f_\calA=
x_1x_2x_3(x_1+x_0)(x_2+x_0)(x_3+x_0)(x_1+x_2+x_0)(x_1+x_3+x_0)(x_2+x_3+x_0)$. The
module $\Der_{X}(-\log_0 f_\calA)$ is generated by four derivations whose
coefficients are all cubics in $x$.
%\[
%\bgroup\begin{pmatrix}-16 x y^{2}+10 x^{2} z+6 x y z+24 x z^{2}+5
%  x^{2} w+14 x y w+14 x z w+10 x w^{2}& 9 x^{2} y+4 x y^{2}+11 x^{2}
%  z+12 x y z+12 x z^{2}+10 x^{2} w+28 x y w+28 x z w+20 x w^{2}& 20
%  x^{3}-16 x y^{2}-2 x^{2} z-2 x y z-24 x z^{2}+37 x^{2} w-26 x y w-26
%  x z w+2 x w^{2}&
%  -18 x y z-x^{2} w-10 x y w-10 x z w-6 x w^{2}\\
%  36 x y^{2}+20 y^{3}+46 x y z+42 y^{2} z+24 y z^{2}+86 x y w+77 y^{2}
%  w+86 y z w+82 y w^{2}& -5 y^{3}+2 x y z+3 y^{2} z+12 y z^{2}+10 x y
%  w-8 y^{2} w+10 y z w+2 y w^{2}& -16 x^{2} y+20 y^{3}-2 x y z-2 y^{2}
%  z-24 y z^{2}-26 x y w+37 y^{2} w-26 y z w+2 y w^{2}&
%  -18 x y z-10 x y w-y^{2} w-10 y z w-6 y w^{2}\\
%  -54 x y z-16 y^{2} z-44 x z^{2}-48 y z^{2}-30 z^{3}-94 x z w-94 y z
%  w-103 z^{2} w-98 z w^{2}& -18 x y z+4 y^{2} z-16 x z^{2}-15 y
%  z^{2}-15 z^{3}-26 x z w-26 y z w-44 z^{2} w-34 z w^{2}& -16 x^{2}
%  z+18 x y z-16 y^{2} z+16 x z^{2}+16 y z^{2}+30 z^{3}+10 x z w+10 y z
%  w+73 z^{2} w+38 z w^{2}&
%  -18 x y z-10 x z w-10 y z w-z^{2} w-6 z w^{2}\\
%  -54 x y w-16 y^{2} w+46 x z w+6 y z w+24 z^{2} w-4 x w^{2}-40 y
%  w^{2}+50 z w^{2}+w^{3}& -18 x y w+4 y^{2} w+2 x z w+12 y z w+12
%  z^{2} w-8 x w^{2}+y w^{2}+19 z w^{2}+2 w^{3}& -16 x^{2} w+18 x y
%  w-16 y^{2} w-2 x z w-2 y z w-24 z^{2} w-8 x w^{2}-8 y w^{2}-26 z
%  w^{2}-7 w^{3}&
%  36 x y z+18 x y w+18 x z w+18 y z w+8 x w^{2}+8 y w^{2}+8 z w^{2}+3 w^{3}\\
%    \end{pmatrix}\egroup
%\]
The module $\Omega^1_{X}(\log_0 f_\calA)$ has also four generators,
and a minimal free resolution of the form $0\to {R_4}^1\to {R_4}^4\to
{R_4}^6\to \Omega^1_{X}(\log_0 f_\calA)\to 0$. In particular, $\calA$
is not tame: $\pdim\Omega^1(\log f_\calA)=2$.

The only non-tame point is the origin. Hence, the ideal $\calL_{f_\calA}$ is
prime in all points with at least one nonzero $x$-coordinate. However,
it has a $4$-dimensional component over the origin and in particular
is not Cohen--Macaulay.

Outside the origin, $\ann_{D_n}(f_\calA^s)$ is generated by
derivations, but this is false at the origin:
$\ann_{D_n}(f_\calA^s)$ is generated by 
$\Der_{X}(-\log_0 f_\calA)$ and a (long) operator $P$ of
order $2$ and degree $4$ (in $x$).
While we know no computer algebra system that can
compute $\ann_{D_n}(f^s_\calA)$ directly, 
one can check that
the lead term (under the order filtration) of
$P$, together with $\calL_{f_\calA}$, generates a prime ideal.
It follows from Lemma \ref{lem-know-annfs} 
that
$b_{\log \calA}(s)=s+\frac{n+\deg_{(1,-1)}(P)}{d}=s+(4+4-2)/9=s+2/3$.
\comment{
The ideal $\ann_D(f^s)$ is not generated by the logarithmic derivations
but needs other generators of order two such as:
\begin{eqnarray*}
(4/9) x^2 z^2 \del_x^2-(4/9) x y z^2 \del_x^2-(16/27) x^3 w\del_x^2
 +(4/27) x^2 y w \del_x^2
+(14/9) x^2 z w \del_x^2+(2/3) x y z w \del_x^2\\
+(28/9) x z^2 w \del_x^2
 -(1/3) x^2 w^2 \del_x^2+2 x y w^2 \del_x^2+
(38/9) x z w^2 \del_x^2+(14/9) x w^3 \del_x^2+(4/9) x^2 z^2 \del_x\del_y\\
 -(4/9) y^2 z^2 \del_x \del_y+(4/27) x^2 y w \del_x
\del_y+(20/27) x y^2 w \del_x \del_y
+(2/3) x^2 z w \del_x \del_y+(4/3)
x y z w \del_x \del_y+(2/3) y^2 z w \del_x \del_y\\
+(8/3) x z^2 w \del_x\del_y-
(8/9) y z^2 w \del_x \del_y+(1/9) x^2 w^2 \del_x \del_y+(28/9)
x y w^2 \del_x \del_y+y^2 w^2 \del_x \del_y+(26/9) x z w^2 \del_x
\del_y\\
+(2/9) y z w^2 \del_x \del_y+(2/3) x w^3 \del_x \del_y+(14/9) y
w^3 \del_x \del_y+\\
(4/9) x y z^2 \del_y^2-(4/9) y^2 z^2
\del_y^2+(20/27) x y^2 w \del_y^2+(16/27) y^3 w \del_y^2\\
+(2/3) x y z w
\del_y^2-(2/9) y^2 z w \del_y^2-(4/3) y z^2 w \del_y^2\\
+(10/9) x y w^2 \del_y^2+(13/9) y^2 w^2 \del_y^2-(10/9) y z w^2 \del_y^2+(2/3) y w^3
\del_y^2+(4/9) x^2 z^2 \del_x \del_z\\
-(4/9) x y z^2 \del_x \del_z+(4/9)
x z^3 \del_x \del_z-(4/9) y z^3 \del_x \del_z+\\
(4/9) x^2 y w \del_x
\del_z+(34/27) x^2 z w \del_x \del_z-(10/27) x y z w \del_x
\del_z+(4/9) x z^2 w \del_x \del_z\\
-(16/3) y z^2 w \del_x \del_z-(26/9)
z^3 w \del_x \del_z+(2/3) x^2 w^2 \del_x \del_z+\\
(8/3) x y w^2 \del_x
\del_z-(2/3) x z w^2 \del_x \del_z-6 y z w^2 \del_x \del_z-(79/9) z^2
w^2 \del_x \del_z+(16/9) x w^3 \del_x \del_z-(58/9) z w^3 \del_x
\del_z+\\
(4/9) x y z^2 \del_y \del_z-(4/9) y^2 z^2 \del_y \del_z+(4/9) x
z^3 \del_y \del_z-(4/9) y z^3 \del_y \del_z+(4/9) x y^2 w \del_y
\del_z+(70/27) x y z w \del_y \del_z+\\
(2/27) y^2 z w \del_y
\del_z+(8/3) x z^2 w \del_y \del_z-(4/3) y z^2 w \del_y \del_z+(2/3)
z^3 w \del_y \del_z\\
+(8/3) x y w^2 \del_y \del_z+(2/3) y^2 w^2 \del_y
\del_z+(10/9) x z w^2 \del_y \del_z+(10/9) y z w^2 \del_y
\del_z+\\
(17/9) z^2 w^2 \del_y \del_z+(16/9) y w^3 \del_y \del_z+(2/3) z
w^3 \del_y \del_z+(4/9) x z^3 \del_z^2-(4/9) y z^3 \del_z^2+\\
(4/9) x yz w \del_z^2-(4/27) x z^2 w \del_z^2-(68/27) y z^2 w \del_z^2-(10/9)
z^3 w \del_z^2\\
-(4/3) x z w^2 \del_z^2-(4/3) y z w^2 \del_z^2-(10/3)
z^2 w^2 \del_z^2-\\
(20/9) z w^3 \del_z^2-(8/9) x^2 z^2 \del_x
\del_w+(8/9) x y z^2 \del_x \del_w-(8/9) x^2 y w \del_x \del_w-(20/9)
x^2 z w \del_x \del_w\\
-(4/9) x y z w \del_x \del_w-(16/3) x z^2 w
\del_x \del_w+(32/9) y z^2 w \del_x \del_w-(26/27) x^2 w^2 \del_x
\del_w-\\
(154/27) x y w^2 \del_x \del_w-(50/9) x z w^2 \del_x
\del_w+(14/3) y z w^2 \del_x \del_w+(28/9) z^2 w^2 \del_x
\del_w-(32/9) x w^3 \del_x \del_w\\
+(38/9) z w^3 \del_x \del_w+(5/9)
w^{4} \del_x \del_w-(8/9) x y z^2 \del_y \del_w+(8/9) y^2 z^2 \del_y
\del_w-(8/9) x y^2 w \del_y \del_w-\\
(20/9) x y z w \del_y \del_w-(4/9)
y^2 z w \del_y \del_w-(32/9) x z^2 w \del_y \del_w+(16/9) y z^2 w
\del_y \del_w\\
-(98/27) x y w^2 \del_y \del_w-(16/9) y w^3 \del_y
\del_w-(10/9) z w^3 \del_y \del_w-\\
(1/3) w^{4} \del_y \del_w-(8/9) x
z^3 \del_z \del_w+(8/9) y z^3 \del_z \del_w-(8/9) x y z w \del_z
\del_w\\
-(16/9) x z^2 w \del_z \del_w+(64/9) y z^2 w \del_z
\del_w+(20/9) z^3 w \del_z \del_w-\\
(32/9) x y w^2 \del_z \del_w+(40/27)
x z w^2 \del_z \del_w\\
+(152/27) y z w^2 \del_z \del_w+(70/9) z^2 w^2
\del_z \del_w-(4/3) x w^3 \del_z \del_w-\\
(4/3) y w^3 \del_z
\del_w+(58/9) z w^3 \del_z \del_w-(2/9) w^{4} \del_z \del_w+(28/9) x
z^2 w \del_w^2-(28/9) y z^2 w \del_w^2\\
+(28/9) x y w^2 \del_w^2+(16/9)
x z w^2 \del_w^2-(40/9) y z w^2 \del_w^2-(16/9) z^2 w^2
\del_w^2+(44/27) x w^3 \del_w^2+(4/27) y w^3 \del_w^2-\\
(28/9) z w^3
\del_w^2-(2/9) w^{4} \del_w^2+(4/9) x z^2 \del_x-(4/9) y z^2
\del_x-(76/27) x^2 w \del_x-(44/27) x y w \del_x+\\
(14/9) x z w
\del_x+(2/3) y z w \del_x+(28/9) z^2 w \del_x-4 x w^2 \del_x\\
+2 y w^2
\del_x+(38/9) z w^2 \del_x+(14/9) w^3 \del_x+(4/9) x z^2 \del_y-\\
(4/9)y z^2 \del_y+(68/27) x y w \del_y+(76/27) y^2 w \del_y+(2/3) x z w
\del_y-(2/9) y z w \del_y-(4/3) z^2 w \del_y\\
+(10/9) x w^2
\del_y+(52/9) y w^2 \del_y-(10/9) z w^2 \del_y+(2/3) w^3 \del_y+(4/9)
x z^2 \del_z-(4/9) y z^2 \del_z+(4/9) x y w \del_z+\\
(98/27) x z w
\del_z-(62/27) y z w \del_z+(8/9) z^2 w \del_z-(4/3) x w^2
\del_z-(4/3) y w^2 \del_z\\
-(20/9) w^3 \del_z-(8/9) x z^2 \del_w+(8/9) y
z^2 \del_w-(8/9) x y w \del_w-(20/9) x z w \del_w-\\
(4/9) y z w
\del_w-(16/9) z^2 w \del_w+(38/27) x w^2 \del_w-(98/27) y w^2
\del_w-(28/9) z w^2 \del_w-(8/9) w^3 \del_w
\end{eqnarray*}
}%end comment
\schluss
\end{exa}

%\begin{rmk}
%The arrangement
%$f_\calA=xyzuw(x+y)(y+z)(z+u)(u+w)(w+x)(u+y+z)(u+w+z)(w+x+y)(x+y+z)(x+u+w)(x+y+%z+u+w)$
%in $\CC^5$
%has $\pdim_{R_5}(\Omega^1_X(\log f_\calA))=3$. Yet, the logarithmic
%complex is exact.
%\end{rmk} 

\begin{rmk}
For an arrangement $\calA$ and a complex number $\alpha$, consider the
smallest $i\in\ZZ$, such that
$\ann_{D_n}(f_\calA^{\alpha-i-j})$ is generated by derivations for all
$j\in\NN$. This ``derivation index of $f^\alpha$'' should be related
to the smallest root in $\alpha+\ZZ$ of the $b$-function of $f$ on
$D_n\cdot f^\alpha$. 
\end{rmk}

From our theory, the following interesting question has a positive
answer for all divisors that fit Theorem \ref{thm-tame-annfs}, but
experimentally it is also correct for the bracelet (see Example
\ref{exa-bracelet}) and the divisor $xy(x+y)(x+zy)$:
\begin{que}
Is $\gr_{(0,1)}(\ann_{D_n}(f^s))$ always prime?
\end{que}
The only candidate for the prime ideal is the defining ideal of the
(closure in $T^*\CC^n$ of the) union (over $\alpha$) of all conormals
to $\Var(f-\alpha)$.
%, which is the (closure of the) union over all
%$\alpha\in\CC$ and all
%$\x\in\Var(f-\alpha)\minus\Var(\Jac(f-\alpha))$.

%%%%%%%%%%%%%%%%%%%%%%%%%%%%%%%%%%%%%%%%
\subsection{Bernstein--Sato polynomials and combinatorics}

The Bernstein--Sato polynomial $b_f(s)$ of a polynomial  $f\in R_n$ is the
monic 
%polynomial of least degree for which a functional equation
%exists as in \eqref{eqn-bfu}; it is also the monic
generator of the ideal 
\[
\langle b_f(s)\rangle = D_n[s](f,\ann_{D_n[s]}(f^s))\cap \QQ[s].
\]
It is a difficult and long-standing problem to determine the
Bernstein--Sato polynomial of an arrangement $f_\calA$. While the case
of a generic arrangement is completely understood (see
\cite{W-Bernstein, Saito-Compos07}), even in dimension three it is
very mysterious how the singularities of the arrangement contribute to
the roots of the Bernstein--Sato polynomial.

The Bernstein--Sato polynomial of an arrangement is more  constrained
than that of a general divisor. For example, all its roots have to be
in the interval $(-2,0)$, and one can read off the possible denominators of
the roots  directly from the arrangement by \cite[Thms.~1+2]{Saito-bfu}. 
A natural question in the context of arrangements is to what extent
the intersection lattice governs the Bernstein--Sato polynomial. The
following example indicates that one ought to ask a more refined
question. 

\begin{exa}\label{exa-ziegler}
Let $P_1,\ldots,P_6$ be six points in $\PP^2$ and let $\calA$ be the
arrangement of 9 planes in $\CC^3$ given as the union of the
hyperplanes $G_{i,j}$ passing through $P_i$ and $P_j$ for
$i-j=1\modulo 6$ or $i-j=3\modulo 6$.  The arrangement is sketched as
the union of the solid and dashed (but not the dotted) lines below. We assume
that the points are sufficiently generic to make all 9 lines distinct,
and so that the only triple points are the $P_i$.
%An important characteristic of such
%arrangement is whether or not $P_1,\ldots,P_6$ lie on a quadric. We
%call these cases ``degenerate'' and ``generic'' respectively.  
Let $J=\Jac(f_\calA)$.

Note that given three homogeneous linear forms
$L_1,L_2,L_3=aL_1+bL_2\in\CC[x,y]$ then a curve defined by $\eps(x,y)$
satisfies $\eps\cdot L_3\in \Jac(L_1\cdot L_2\cdot L_3)$ at $(0,0)$ if
and only if $\eps(0,0)=0$ and the tangent of $\eps$ at $(0,0)$ is the
line defined by $aL_1-bL_2$. We call this tangent line the \emph{conjugate
  of $L_3$ relative to $L_1$ and $L_2$}.
\smallskip

\newlength{\currentparindent}
\setlength{\currentparindent}{\parindent}
\noindent\begin{minipage}{\textwidth}
\begin{minipage}{0.7\textwidth}
\setlength{\parindent}{\currentparindent} For generic choices of the
points, $H^0_\frakm(R_3/J)$ is generated in degree $9$ by six forms
(one for each vertex) 
given as the union of the solid and dotted (but not the dashed) lines
indicated on the right.  The ``tangential'' dotted line on the far
left is the conjugate of
the big diagonal relative to the dashed lines at 
the vertex in question (and
hence multiplies the big diagonal into the Jacobian there).
\end{minipage}\hfill
\begin{minipage}{0.27\textwidth}
%\begin{figure}
%\psfrag{0}{generic} 
%\psfrag{1}{special}
\includegraphics[width=\textwidth]{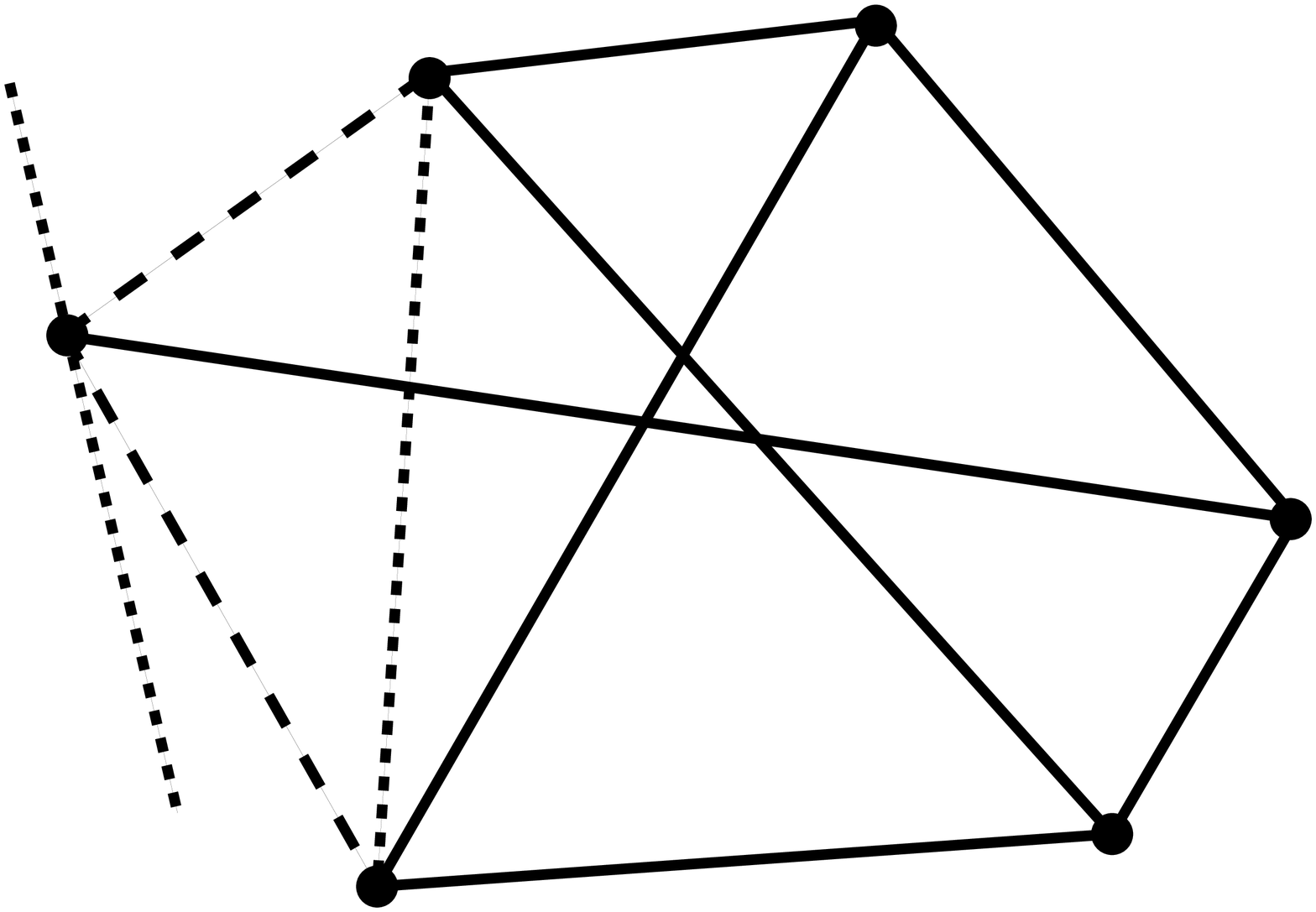}% zfact*y*2 clever lines
%\end{figure}
\end{minipage}
\end{minipage}

%In the degenerate case, $H^0_\frakm(R_n/J)$ also contains a form $g_8$
%of degree 8: the quadric through the $P_i$ times a (unique) sextic
%having singularities with certain prescribed tangents at the
%$P_i$.

We investigate the question under what circumstances
$H^0_\frakm(R_3/J)$ contains the coset $g_8$ of the product of a cubic
form $c$ with $G_{1,6}\cdot G_{5,6}\cdot G_{4,5}\cdot G_{2,5}\cdot
G_{3,6}$. Such curve must:
\begin{enumerate}
\item pass through the points $P_7=G_{1,2}\cap G_{3,4}$ and
  $P_8=G_{1,4}\cap G_{2,3}$;
\item pass through the points $P_1,P_2,P_3,P_4$ and in each of them
  have the correct tangent. (For example, the tangent at $P_1$ must be
  the conjugate line of $G_{6,1}$ relative to $G_{1,2}$ and $G_{1,4}$).
\end{enumerate}
The vector space of cubics that passes through $P_1,P_2,P_3,P_4,P_7,P_8$ is
typically $4$-dimensional. If $P_5$ and $P_6$ are in generic position, the
tangent conditions at $P_1,P_2,P_3,P_4$ on $c$ are independent and so
no suitable $c$ exists.

Suppose $P_1,\ldots,P_5$ are in generic position and fixed.  Then
$P_5$ determines the necessary tangents of $c$ at $P_2$ and $P_4$. View $P_6$
as movable, and choose the tangent direction for $c$ at $P_3$ (hence a
line on which $P_6$ should be located).
This then determines a cubic with a tangent at $P_1$
that we cannot control.  The conjugates of the tangents of $c$ at
$P_1$ (relative to $G_{1,2}$ and $G_{1,4}$) and $P_3$ (relative to
$G_{2,3}$ and $G_{3,4}$) yield a unique point $P_6$ for which the
conditions (1) and (2) above are satisfied. 

Hence, with $P_1,\ldots,P_5$ fixed, the choices for $P_6$ that make
the conditions (1) and (2) solvable form a curve $q$, parameterized by
the choice of a line through $P_3$, and this curve
meets any line through $P_3$ in exactly one point other than
$P_3$. Choosing lines through $P_3$ that pass nearby $P_1$ one sees
that $P_1$ lies on $q$ and $q$ is smooth there, with tangent given by
the conjugate to the line through $P_1$ and $P_3$ relative to
$G_{1,2},G_{1,4}$. By symmetry, a similar statement holds at $P_3$.

Now pick a $P_6$ on $q$ but generic otherwise, keep $P_1,P_2,P_3,P_4$
fixed, and move $P_5$ on $q$. The cubic $c$ can then not change, since
$P_1,\ldots,P_4,P_6$ pin it down. By the same reasoning as before, $q$
passes through $P_2,P_4$ and is smooth in both points.

Finally, keep $P_1,\ldots,P_5$ fixed and degenerate $P_6$ into 
$P_5$. In order to have an element $G_{1,6}\cdot G_{5,6}\cdot
G_{4,5}\cdot G_{2,5}\cdot G_{3,6}$ times a cubic in
$H^0_\frakm(R_3/J)$ it is necessary that $c\cdot G_{5,6}$ should
\begin{enumerate}
\item pass through the points $P_7=G_{1,2}\cap G_{3,4}$ and
  $P_8=G_{1,4}\cap G_{2,3}$;
\item pass through the points $P_1,P_2,P_3,P_4$ and in each of them
  have the correct tangent.
\end{enumerate}
Because of the degeneration, the direction of
$G_{5,6}$ can be chosen freely and this added degree of freedom makes
the problem solvable.

\emph{In summa}, for generic choices of $P_1,\ldots,P_5$ there is an
element $g_8=G_{1,6}\cdot G_{5,6}\cdot G_{4,5}\cdot G_{2,5}\cdot
G_{3,6}\cdot c$, $c$ a cubic, in $H^0_\frakm(R_3/J)$ if and only if
$P_6$ is a generic point on a curve $q$ which satisfies: 
$P_1,\ldots,P_5\in q$; $q$ meets a generic line through $P_3$ in exactly one
other point; $q$ is smooth at $P_3$. In other words, $q$ is the unique
quadric through $P_1,\ldots,P_5$.

To be explicit, one can verify with {\sl Macaulay 2} that one may take
$G_{1,2}=2x+y+z$, $G_{2,3}=x+y+z$, $G_{3,4}=2x + 3y + 4z$,
$G_{4,5}=z$, $G_{5,6}=x + 3z$, $G_{6,1}=y$, $G_{1,4}=2x + 3y + z$,
$G_{2,5}=x$ and $G_{3,6}=x + 2y + 3z$. Then $q=2x^2+3xy+7xz+3yz+3z^2$
and
$c=20x^3+68x^2y+73xy^2+24y^3+60x^2z+130xyz+65y^2z+51xz^2+54yz^2+13z^3$.

Pellikaan, and Van Straten and Warmt proved
\cite[Thm.~4.7]{vanStratenWarmt} that since $\dim(J)=1$,
$\deg(H^0_\frakm(R_3/J))$ is an Artinian Gorenstein module.  The proof
identifies $R_3$ and $R_3/J$ with their canonical modules and also, in
\cite[Prop.~3.6]{vanStratenWarmt}, the Koszul complex on the
derivatives of $f_\calA$ with its dual complex. Similar ideas are
detailed in \cite{DimcaSaito-Koszul1212.1081}. In the graded case,
these identifications incur appropriate shifts and one obtains
symmetry of the degrees of $H^0_\frakm(R_3/J)$ about
$3\deg(f)/2-3$. Hence a generator $g_8$ in degree 8 corresponds to a
socle element in degree $2(3\cdot 9/2-3)-8=13$. (The Hilbert series of
$H^0_\frakm(R_3/J)$ in the example above is
$T^8+4T^9+6T^{10}+6T^{11}+4T^{12}+T^{13}$; in the generic case, when
the six points are not on a quadric, it is
$4T^9+6T^{10}+6T^{11}+4T^{12}$.)

Since the singular loci of the arrangements are 1-dimensional, and
since further the critical root $-16/9$ has no chance to be root of a
local Bernstein--Sato polynomial outside the origin in any case, it
follows from \cite{Saito-Compos07} that $-16/9$ is a root of
$b_{f_\calA}(s)$ if and only if $16/9$ contributes to the pole order
spectrum at the origin. However, by \cite{DimcaSaito-Koszul1212.1081},
one can decide this directly from the Hilbert series of
$H^0_\frakm(R_3/J)$, that of $R_3/J$ and that of the Koszul homology
groups of the derivatives $f_1,f_2,f_3$ on $R_3$.  In particular,
computations with \emph{Macaulay 2} \cite{M2} can now easily confirm
that in the degenerate case $-16/9$ must be a root of the
Bernstein--Sato polynomial while it is not in the generic case.  A
detailed numerical discussion and an explanation of this conclusion
has been given in \cite[Remark 4.14]{Saito-bfu} where the
Bernstein--Sato polynomial has been determined entirely, but compare
also \cite[Remark 5.4]{Saito-bfu}.

An alternative computer-based method is the following.  The
arrangements are tame, no matter whether the vertices lie on a
quadric.  Thus, $\ann_{D_n[s]}(f_\calA^s)$ is generated by the Euler
relation and the logarithmic vector fields that kill $f_\calA$. By
considering the $D_n[s]$-ideal generated by $E-9s,s+16/9$ and
$\Der(-\log_0 f_\calA)$, V.~Levandovskyy has confirmed with PLURAL
\cite{Plural} that the Bernstein--Sato polynomials differ in the
generic and in the degenerate case.

%For any polynomial $q(s)\in\CC[s]$ set $\fraka_q=\{r\in
%R_n\mid q(s)rf^s\in D_n[s]\bullet f^{s+1}\}$, an ideal in $R_n$.  One
%can show by the aid of a computer that $\fraka_q\supseteq
%\frakm^{12}$ if
%$q(s)=(s+1)(s+2/3)(s+3/3)(s+4/3)\prod_{i=3}^{15}(s+i/9)$; the first
%four factors are local to any flat of $\calA$ that corresponds to a
%triple point of $\PP\calA$ while the product collects candidate roots
%at the origin. By \cite[Lem.~2.2]{W-Bernstein}, if $\fraka_q$ contains
%$\frakm^t$ then $b_f(s)$ divides
%$q(s)\cdot\prod_{i=1}^t(s+(i+n)/\deg(f))$. It follows that $-16/9$ is
%in the generic case not a root of $b_{f_\calA}(s)$.
Since the intersection lattice of $\calA$ is the same for degenerate
and generic $\calA$, the Bernstein--Sato polynomial of an arrangement
is not a function of the intersection lattice: differential invariants
remember finer structure.
\schluss
\end{exa}

It would seem natural to conclude that for an arrangement the
Bernstein--Sato polynomial is not determined by linear information.
However, it is our opinion that one should refine the
intersection lattice and stick to linear data. 
%Indeed, the
%classical intersection lattice only lists intersections of the
%hyperplanes.

%Instead,
%consider the collection of all subspaces of $\CC^n$ with unusual
%incidence function relative to the set of hyperplanes and their
%intersections. 
\begin{dfn}
For a central arrangement $\calA$, we define the \emph{syzygetic
  intersection lattice} $L^+_\calA$ as follows. Let $L^0_\calA=L_\calA$
and for $i\geq 1$ set $L^i_\calA$ to be the set of all vector spaces
that appear as nontrivial intersections of nontrivial sums of elements
of $L^{i-1}_\calA$. Now let $S$ consist of the \emph{syzygetic}
elements $V$ in $\bigcup_i L^i_\calA$ that satisfy: $V\in L^i_\calA$
is in $S$ if there are elements $V_1,\ldots,V_k$ in $L^{i-1}_\calA$
such that $V=\sum_j V_j$ but $\dim(V)<\sum_j\dim(V_j)$. Then define
$L^+_\calA$ as the pair $(L_\calA,S)$.
\end{dfn}
The component $S$ of $L^+_\calA$ indicates syzygies (in the original
sense of the word) between elements of $L_\calA$.

In the degenerate case of Example \ref{exa-ziegler}, by the theorem
of Pappus as refined by Pascal, the points that arise (say) as the
intersections $G_{1,5}\cap G_{4,2}$, $G_{1,6}\cap G_{3,4}$,
$G_{2,6}\cap G_{3,5}$ are \emph{collinear}. (There are a maximum of 60
such Pascal lines in the degenerate case, but some of them may
coalesce for certain arrangements). Thus, in the degenerate case, the
affine cones over all Pascal lines would show up in $L^+(\calA)$.  In
contrast, the syzygetic lattice of all arrangements in the generic
case contains no Pascal lines, so that $L^+_\calA$ discriminates
between the generic and the degenerate case although $L_\calA$ does
not.

\subsection{Strong Monodromy Conjecture}
%%%%%%%%%%%%%%%%%%%%%%%%%%%%%%%%%%%%

Often, even very concrete questions regarding the root set $\rho_f$
of the Bernstein--Sato polynomial of a polynomial are very
hard to answer, even for arrangements. For example, the following
seems essentially open:

\begin{que}\label{que-n/d}
  Suppose $f$ is a homogeneous polynomial of degree $d$ of embedding
  dimension $n$.  Assuming that $f$ is not the product of divisors in
  different sets of variables, under what circumstances is the number
  $-n/d$ a root of $b_f(s)$?
\end{que}

The $n/d$-Conjecture \ref{cnj-n/d} of Budur, \mustata\ and Teitler 
\cite{BudurMustataTeitler-GeomDed11} lists central indecomposable
hyperplane arrangements as one important case where 
Question \ref{que-n/d} is expected to have a positive answer.  See
\cite{BudurSaitoYuzvinsky-JLMS11} for positive results, including the
case of reduced arrangements in dimension $3$.

A sufficient condition for the $n/d$-Conjecture to hold is that the
(coset of the) element $1\in R_n$ be nonzero in the vector space
$U_f\cong H^{n-1}(M_f,\CC)$,
%\cite[Thm~4.8]{W-Bernstein}, because of
see \cite[Thm.~4.12]{W-Bernstein}. We do not know of significant
classes of singularities where this topological version of the
question has been answered.  Our structural results about
$\ann_{\calD_X}(f^s)$ now enable us to prove that most arrangements
satisfy Conjecture \ref{cnj-n/d}.

%  By \cite{BMY}, the theorem below implies that arrangements which have
%  $\ann_D(f^s)$ generated by derivations (which are necessarily
%  logarithmic) satisfy the monodromy conjecture.

\begin{thm}\label{thm-n/d}
  Let $\calA$ be a central indecomposable arrangement (reduced or
  otherwise) with defining equation $f_\calA=\prod_1^d L_i\in R_n$ on
  $X=\CC^n$. Then $\Der_{X}(-\log_0 f_\calA)$ is contained in the
  ideal $D_n\cdot x$.

 If, in addition, $\ann_{D_n}(f_\calA^s)$ is generated by derivations
then $b_{f_\calA}(-n/d)=0$.
\end{thm}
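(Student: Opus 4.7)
The plan is to prove the two claims in sequence, exploiting the homogeneity of $f_\calA$ throughout.

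For the first claim, take $\delta = \sum_i a_i\partial_i \in \Der_X(-\log_0 f_\calA)$ and decompose it as $\delta = \delta_0 + \delta_1 + \delta_{\geq 2}$ according to the homogeneous degrees of the coefficients $a_i$ (constant, linear, of degree $\geq 2$). Since $f_\calA$ is homogeneous of degree $d$, the pieces $\delta_j\bullet f_\calA$ lie in distinct graded components, so $\delta\bullet f_\calA=0$ forces $\delta_j\bullet f_\calA=0$ separately. If $\delta_0 = \sum c_i\partial_i \neq 0$, a linear change of coordinates brings it to $\partial_1$, whence $f_\calA$ is independent of $x_1$ and $\calA$ splits off the trivial arrangement in the $x_1$-direction, contradicting indecomposability. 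For the linear piece, $\delta_1 = \sum c_{ij}x_j\partial_i$ acts on the space of linear forms as an endomorphism $A$ via $A(L) := \delta_1\bullet L$. Writing $f_\calA = \prod L_i^{m_i}$ over distinct $L_i$, we have $\delta_1\bullet f_\calA/f_\calA = \sum_i m_i A(L_i)/L_i = 0$ as a rational function; since distinct $L_i$ have distinct polar loci, each $A(L_i)$ must be divisible by $L_i$, so $A(L_i) = \lambda_i L_i$. Applying $A$ to any minimal linear dependency $\sum_r c_r L_{i_r} = 0$ (a matroid circuit) and subtracting $\lambda_{i_1}$ times the dependency yields $\sum_{r\geq 2} c_r(\lambda_{i_r}-\lambda_{i_1})L_{i_r} = 0$; minimality forces independence of the remaining $L_{i_r}$, so all $\lambda_{i_r}$ coincide. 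Connectedness of the matroid (equivalent to indecomposability) then equates all $\lambda_i$ to a common $\lambda$, so $A=\lambda I$ on linear forms and $\delta_1 = \lambda E$ for the Euler field; substitution gives $0 = \lambda d f_\calA$, hence $\lambda=0$. Thus each $a_i\in\frakm^2$, and a short commutator calculation (e.g. $x_jx_k\partial_i = \partial_i x_jx_k - \delta_{ik}x_j - \delta_{ij}x_k$) places $\delta$ in the left ideal $D_n\cdot x$.

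For the second claim, the hypothesis together with the Euler relation $(E-ds)\bullet f_\calA^s = 0$ and the standard Kashiwara reduction (replace $s$ by $E/d$ modulo $E-ds$ to reduce any $s$-dependent annihilator to $\ann_{D_n}(f_\calA^s)$) yields
\[
\ann_{D_n[s]}(f_\calA^s) = D_n[s]\cdot\Der_X(-\log_0 f_\calA) + D_n[s]\cdot(E-ds),
\]
which by Part 1 is contained in $J := D_n[s]\cdot x + D_n[s]\cdot(E-ds)$. The cyclic quotient $Q := D_n[s]/J$ then receives a surjection $D_n[s]\cdot f_\calA^s \twoheadrightarrow Q$. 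Using $D_n/D_n\cdot x \cong \CC[\partial]$ (with $x_i$ acting as $-\partial/\partial\partial_i$), the identity $x_i\partial_i = \partial_ix_i - 1$ gives $x_i\bar{\partial_i} = -\bar 1$, so $E\bar 1 = \sum_i x_i\bar{\partial_i} = -n\bar 1$. The relation $E\equiv ds$ modulo $J$ therefore forces $(ds+n)\bar 1 = 0$, showing $Q\neq 0$ and that $s$ acts on the generator $\bar 1$ as the scalar $-n/d$.

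Finally, the Bernstein--Sato relation $b_{f_\calA}(s)\cdot f_\calA^s = P(s)\cdot f_\calA\cdot f_\calA^s$ reads $b_{f_\calA}(s)\bar 1 = P(s) f_\calA\bar 1$ in $D_n[s]\cdot f_\calA^s$. Pushing this to $Q$, the right-hand side vanishes because $f_\calA\in\frakm^d\subseteq D_n\cdot x$ annihilates $\bar 1$ in $Q$; hence $b_{f_\calA}(s)\bar 1 = 0$ in $Q$, and since $s$ acts on $\bar 1\neq 0$ as $-n/d$ we conclude $b_{f_\calA}(-n/d)=0$. The main subtlety I expect to have to handle is the matroid/circuit step of Part 1 in the non-reduced setting, but the residue analysis involves only the distinct $L_i$ and the closing identity $\lambda d = \lambda\sum m_i = 0$ passes through multiplicities without interference, so the argument is unaffected.
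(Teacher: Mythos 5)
Your proof is correct, and both parts ultimately track the paper's structure (split $\delta$ by the $(x\mapsto 1, \partial\mapsto -1)$-grading, kill the $\le 0$ pieces, and run the Bernstein--Sato equation into $D_n[s]/D_n[s]\cdot(x,E-ds)$ where $s$ acts as $-n/d$). The one place you genuinely diverge is the degree-zero piece $\delta_1$. You extract $A(L_i)=\lambda_i L_i$ from the pole analysis of $\sum m_i A(L_i)/L_i=0$, then apply $A$ to a circuit relation $\sum c_r L_{i_r}=0$ and use minimality to equate the $\lambda_{i_r}$, closing with connectedness of the matroid. The paper instead observes that the $L_k$ are eigenvectors of $B$ spanning $V$ (essentiality), changes coordinates to diagonalize $B$ to $\sum w_i x_i\partial_i$, and proves a self-contained claim that a non-standard quasi-homogeneity forces a product decomposition by partitioning variables according to weight. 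Both routes show \emph{indecomposable} $\Rightarrow$ all eigenvalues of $A$ coincide $\Rightarrow$ $\delta_1=\lambda E$ with $\lambda d=0$. Your circuit argument is the more combinatorial one and avoids the coordinate change; the paper's weight-partition argument is perhaps more geometric and makes visible the variable split witnessing decomposability. It is worth noting that the equivalence ``indecomposable $\Leftrightarrow$ the underlying matroid (on the distinct $L_i$) is connected'' which you invoke is exactly what the paper's Claim is re-proving in linear-algebraic form, so the two are not far apart in content. Your handling of $\delta_{\ge 2}$ by the commutator identity is a concrete substitute for the paper's one-line observation that any $(1,-1)$-homogeneous operator of positive degree lies in $D_n\cdot x$ because $D_n/D_n\cdot x\cong\CC[\partial]$ lives in non-positive degrees; and your Step 2 computation $E\bar 1=-n\bar 1$ via $x_i\bar{\partial_i}=-\bar 1$ matches the paper's use of $E+n=\sum\partial_ix_i\in D_n\cdot x$.
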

\begin{proof}
  As $f_\calA$ is homogeneous, $\Der_{X}(-\log_0 f_\calA)$ and
  $\ann_{D_n}(f_\calA^s)$ are graded modules, where the degree of each
  $x_i$ is $1$ and that of each $\del_i$ is $-1$.  The fact that
  $f_\calA$ is indecomposable implies that there are no homogeneous
  logarithmic derivations of negative degree (that is, with constant
  coefficients). Namely, if $\delta=\sum_1^n c_i\del_i$ were such a
  negative degree logarithmic derivation with $c_i\in\CC$, then a
  judicious change of coordinates transforms $\delta$ into
  $\delta'=\del_1$. But if this were to annihilate the transformed
  $\calA$, then $\calA$ should not be essential, and in particular
  decomposable.

We may restrict attention to homogeneous derivations.  Observe that
derivations of positive degree are always in $D_n\cdot x$ since in
that case the degree of the coefficient exceeds the order of the
operator. It follows that we only need to consider derivations
$\delta$ that annihilate $f$ and are of degree zero.

  Such $\delta$ can be written as $\delta=x^TB\del$ for a suitable
  square matrix $B\in\CC^{n,n}$.  Restricting its domain, one may interpret
  $\delta$ also as a linear operator on $\bigoplus_1^n\CC \cdot
  x_i=:V$. Note that
  \begin{eqnarray*}
    \underbrace{\sum x_i
      b_{i,j}\del_j}_{x^TB\del}\bullet(\underbrace{\sum c_k
      x_k}_{x^Tc})&=&\sum x_ib_{i,j}\delta_{j,k}c_k=\sum
    x_ib_{i,j}c_j=x^TBc
  \end{eqnarray*}
  so that $B$ represents both $\delta$ and this linear transformation
  on $V$.

  As $\delta\bullet(f_\calA)=0$, $\delta$ is logarithmic along all
  $L_k$ as well, so $\delta\bullet(L_k)=\gamma_kL_k$ with
  $\gamma_k\in\CC$. In particular, the sum of the eigenspaces of
  $\delta$ on $V$ is all of $V$, as $\calA$ is essential. It follows
  that $B$ is diagonalizable. Since a coordinate change $x'=Ax$ on
  $\CC^n$ incurs the coordinate change $\del_{x'}=A^{-T}\del_x$ on
  $T^*(\CC^n)_0$, it follows that diagonalizing $B$ can be achieved by
  an appropriate coordinate change in $\CC^n$. 

  In the right coordinates $\delta=\sum_1^n w_ix_i\del_i$. Then,
  however, the condition $\delta\bullet(f_\calA)=0$ is a non-standard
  quasi-homogeneity of $f$ in the sense that not all $w_i$ can be
  equal. We show that this can't be happening.
  \begin{clm}
    An arrangement with a non-standard quasi-homogeneity $\delta$ is
    decomposable.
  \end{clm}
  \begin{proof}\pushQED{\qed}
    In a $\ZZ$-graded domain, the product of two nonzero expressions is
    $\delta$-homogeneous precisely if the factors are. It follows that
    each defining hyperplane of the arrangement inherits the
    homogeneity $\delta$. But a linear form is $\delta$-homogeneous if
    and only if all variables occurring in that form have the same
    $\delta$-degree. We have $w_i=\delta(x_i)/x_i\in\CC$. Since $\delta$
    is non-standard, the decomposition
    \[
    \{x_1,\ldots,x_n\}=\{x_i\mid
    w_i=w_1\}\sqcup\{x_j\mid w_i\neq w_1\}
    \]
    is nontrivial; the induced factorization shows that
    $\calA$ is decomposable.\qedhere$_{\rm Claim}$
  \end{proof}

  Returning to the proof of the theorem, indecomposable arrangements
  are not annihilated by logarithmic derivations of degree zero and it
  follows that $\ann_{D_n}(f^s_\calA)\subseteq D_n\cdot x$.

  Under the assumptions for the second claim we have proved thus the
  existence of a map 
\[
\frac{D_n}{D_n\cdot\Der_{X}(-\log_0 f_\calA)}\stackrel{=}{\to}
\frac{D_n}{\ann_{D_n}(f_\calA^s)} \onto 
\frac{D_n[s]}{D_n[s]\cdot (x,E-ds)}\cong
\frac{D_n}{D_n\cdot x}.
\]  
  Pick a coset $\bar P=P+D[s]\cdot (x,E-ds)$ in the target of this
  map. Since $D_n[s]\cdot (x,E-ds)$ is $(1,-1)$-graded we may assume that
  $P$ is $(1,-1)$-homogeneous.  We calculate $s\bar
  P=\bar{Ps}=\bar{PE/d}=-\bar{nP/d}$ since $E+n\in D_n\cdot x$.  It
  follows that $s$ has minimal polynomial $s+n/d$ on the (nonzero!)
  quotient $D_n[s]/D_n[s]\cdot (x,E-ds)$ of
  $D_n[s]/D_n[s]\cdot(f_\calA,E-ds,\ann_{D_n}(f_\calA^s))$. So the
  Bernstein--Sato polynomial of $\calA$ has to be a multiple of
  $(n+ds)$.
\end{proof}

Recall the Strong Monodromy Conjecture from the introduction. 
For convenience, we introduce the following abbreviation from
\cite{Torrelli-BullMathSoc04}. 

\begin{ntn}
If $\ann_{\calD_X}(f^s)$ is generated by derivations, we say that
\emph{$f$ satisfies condition $(A_s)$}.
\end{ntn}

\begin{cor}
An arrangement (reduced or otherwise) with property $(A_s)$
satisfies the Strong Monodromy Conjecture. Specifically,
this is true for tame arrangements.
\end{cor}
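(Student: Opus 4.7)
The plan is to combine Theorem~\ref{thm-n/d-arr} with the reduction of the Strong Monodromy Conjecture for arrangements to the $n/d$-conjecture carried out by Budur, \mustata, and Teitler in~\cite{BudurMustataTeitler-GeomDed11}. They show that every pole of the topological zeta function $Z_{f_\calA}(s)$ has the form $-\codim(W)/|\calA_W|$ for some dense edge $W$ of $\calA$, where $\calA_W=\{H\in\calA\mid W\subseteq H\}$. By density of $W$, the quotient arrangement $\calA_W$ in $\CC^n/W\cong\CC^{\codim W}$ is central, essential, and indecomposable. It therefore suffices to establish that each candidate $-\codim(W)/|\calA_W|$ is actually a root of $b_{f_\calA}(s)$.

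First I would pass to a generic point $\x\in W$ of the dense edge $W$. There the germ of $\calA$ splits analytically as a product of a trivial arrangement along $W$ and of $\calA_W$ in the transverse directions; more precisely, in suitable analytic coordinates $f_\calA$ differs from $f_{\calA_W}$ only by a unit factor depending on the $W$-coordinates (compare Remark~\ref{rmk-Whitney-diffeo-product} and Remark~\ref{rmk-Ehom}). Using the unit-twist isomorphisms of Remark~\ref{rmk-Der0} and Remark~\ref{rmk-Lf2}, together with the standard Koszul tensor decomposition of the annihilator along the split, one sees that condition $(A_s)$ at $\x$ for $f_\calA$ descends to condition $(A_s)$ at the origin of $\CC^{\codim W}$ for $f_{\calA_W}$.

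Next I would invoke Theorem~\ref{thm-n/d-arr} applied to $\calA_W$: being central, indecomposable, and satisfying $(A_s)$, its Bernstein--Sato polynomial has $-\codim(W)/|\calA_W|$ as a root. Since local Bernstein--Sato polynomials divide the global one (cf.~\cite{NarvaezMebkhout-AnnEcNormSup91}), and since the local Bernstein--Sato polynomial of $f_\calA$ at $\x$ equals, up to the unit twist, that of $f_{\calA_W}$ at the origin, this implies $b_{f_\calA}(-\codim(W)/|\calA_W|)=0$, and ranges over all candidates as $W$ ranges over dense edges. For the second assertion, Theorem~\ref{thm-tame-annfs} (equivalently Theorem~\ref{thm-generic-Terao}) directly says that tame arrangements satisfy $(A_s)$; tameness is preserved by the localization $\calA\mapsto\calA_W$ at a generic point of $W$ thanks to the product decomposition, so the first part applies.

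The main obstacle is the local-to-global transfer in the first step: one must verify that generation of $\ann_{D_n}(f_\calA^s)$ by derivations at a generic point of $W$ forces the same statement for $\calA_W$ as an arrangement in $\CC^{\codim W}$. This rests on the clean product structure of arrangements along edges and on the unit-invariance of the derivational annihilator for strongly Euler-homogeneous germs, both of which are essentially already contained in the machinery developed in Section~\ref{sec-complex}.
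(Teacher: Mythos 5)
Your proposal is essentially the same as the paper's proof: both rely on the Budur--\mustata--Teitler reduction of the Strong Monodromy Conjecture for arrangements to the $n/d$-conjecture for (indecomposable full/dense-edge) subarrangements, both observe that property $(A_s)$ (and tameness) is inherited under localization to a flat, and both then apply Theorem~\ref{thm-n/d}. The only difference is that you spell out the localization step in more detail (generic point, unit twist, product decomposition) where the paper simply asserts "as one sees from localizing at the respective flat."
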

\begin{proof}
Arrangements allow for a combinatorial resolution of singularities,
which makes the computation of the candidate poles of the topological
zeta-function straightforward. This was used in
\cite{BudurMustataTeitler-GeomDed11} to show that the Strong Monodromy
Conjecture for reduced arrangements boils down to the $n/d$-Conjecture
for reduced arrangements.  A closer inspection shows that this
specific statement does a) not require reducedness of the divisor, and
b) actually shows that for an arrangement $\calA$ (reduced or
otherwise) to satisfy the Strong Monodromy Conjecture it is sufficient
to know that each indecomposable full subarrangement of $\calA$
satisfies the $n/d$-Conjecture. Here we say that a subarrangement
$\calA'$ is \emph{full} if for some flat of $\calA$ the arrangement
$\calA'$ consists of exactly the hyperplanes passing through the flat,
and the multiplicity of each hyperplane occurring in $\calA'$ is its
multiplicity in $\calA$.

The property $(A_s)$, as well as tameness, is inherited from any
arrangement $\calA$ with this property to each of its full
subarrangements, as one sees from localizing at the respective
flat. Suppose $\calA$ has $(A_s)$.
By Theorem \ref{thm-n/d}, all its indecomposable
full subarrangements satisfy the $n/d$-Conjecture and so by the
previous paragraph $\calA$ satisfies the Strong Monodromy Conjecture.

The last claim of the corollary follows then from
Theorem \ref{thm-generic-Terao}. 

\end{proof}

\begin{rmk}
\begin{asparaenum}
\item By the previous result, all arrangements in dimension three
  satisfy the Strong Monodromy Conjecture. For reduced arrangements,
  this was shown in \cite{BudurSaitoYuzvinsky-JLMS11}. (In fact,
  \cite{Saito-bfu} proves an even stronger form for reduced rank three
  arrangements: the product of $Z_f(s)\cdot b_f(s)$ is a
  polynomial). Their approach to showing that the $n/d$-Conjecture
  holds in the requisite cases is totally different from ours. We do
  not know how to generalize their method to the non-reduced
  situation.
\item Using {\sl Macaulay2} one verifies (by showing that the ``long
  operator'' in Example \ref{exa-bracelet} is in $D_n\cdot x$) that the
  bracelet satisfies the $n/d$-Conjecture and hence the Strong
  Monodromy Conjecture, even though it fails $(A_s)$.
\item Consider $f_\calA=(x^3+2y^3)(z^8-w^8)(x+z+w)$.  Veys informs us
  that this arrangement does not have $-n/d=-1/3$ as pole in the
  topological zeta function, and $-1/3$ can also not be created as
  pole of more general integrals in the sense of
  \cite{Bories-complutense}. Nonetheless, $f_\calA$ is tame and so
  $\ann_{D_n}(f_\calA^s)$ is generated by derivations wherefore the
  $n/d$-conjecture holds, and so $-1/3$ is a root of the
  Bernstein--Sato polynomial.
\schluss\end{asparaenum}
\end{rmk}

%%%%%%%%%%%%%%%%%%%%%%%%%%%%%%%%%%%%%%%%%%%%%%%%%%%%%%%%%%%%%%%%%%%%
\section{Appendix: Logarithmic derivations under blow-ups}\label{sec-blow-up}
%%%%%%%%%%%%%%%%%%%%%%%%%%%%%%%%%%%%%%%%%%%%%%%%%%%%%%%%%%%%%%%%%%%%

\begin{ntn}\label{ntn-blow-up}
Let $X$ be a smooth scheme over $\CC$, with subschemes
$C\subseteq Y\subseteq X$. Assume that $C$ is smooth and denote by
$\calI_C\supseteq \calI_Y$ the ideal sheaves.
We say that a derivation on $X$ is \emph{logarithmic} along a
subscheme $V$ defined by $\calI_V$ if $\delta(\calI_V)\subseteq
\calI_V$. 

Let
\[
\pi\colon X'\to X
\]
be the (smooth) blow-up of $X$ at $C$ and denote $Y'\subseteq X'$ the
total transform of $Y$.
\end{ntn}

\begin{lem}\label{lem-res-omega}
In the context above, 
$\pi_*(\Omega^i_{X'}(\log \pi^*(Y)))=\Omega^i_X(\log Y)$.
\end{lem}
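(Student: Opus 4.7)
The plan is a ``reduction to codimension two'' argument that exploits the reflexivity of $\Omega^i_X(\log Y)$ noted in the introduction. The assertion is local on $X$, so I would fix a point of $X$ and a local defining equation $f\in\calO_X$ for the Cartier divisor $Y$. Away from $C$, the map $\pi$ is an isomorphism and there is nothing to check. Similarly, if $\codim_X(C)\le 1$ then $\pi$ is again an isomorphism, so one reduces to the case $\codim_X(C)\geq 2$; write $E=\pi^{-1}(C)$ for the exceptional divisor and let $j\colon X\smallsetminus C\hookrightarrow X$, $j'\colon X'\smallsetminus E\hookrightarrow X'$ be the open inclusions. The restriction $\pi|_{X'\smallsetminus E}\colon X'\smallsetminus E\to X\smallsetminus C$ is an isomorphism which identifies $\pi^*(f)$ with $f$, and thus identifies $\Omega^i_{X'}(\log\pi^*(Y))|_{X'\smallsetminus E}$ with $\Omega^i_X(\log Y)|_{X\smallsetminus C}$.

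For the inclusion $\Omega^i_X(\log Y)\subseteq\pi_*\Omega^i_{X'}(\log\pi^*(Y))$, I would take a local section $\omega$ of the left-hand side, so that both $f\omega$ and $df\wedge\omega$ lie in $\Omega^\bullet_X$, and apply $\pi^*$: the equalities $\pi^*(f)\cdot\pi^*(\omega)=\pi^*(f\omega)$ and $d\pi^*(f)\wedge\pi^*(\omega)=\pi^*(df\wedge\omega)$ exhibit both sides as regular forms on $X'$. Since $\pi^*(f)$ is a local equation for $\pi^*(Y)$, this shows $\pi^*(\omega)\in\Omega^i_{X'}(\log\pi^*(Y))$.

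For the reverse inclusion, since $\Omega^i_{X'}(\log\pi^*(Y))$ is torsion-free, restriction to $X'\smallsetminus E$ gives an injection into $j'_*\bigl(\Omega^i_{X'}(\log\pi^*(Y))|_{X'\smallsetminus E}\bigr)$. Applying $\pi_*$, and using $\pi_*\circ j'_*=j_*$ because $\pi|_{X'\smallsetminus E}$ is an isomorphism, produces an inclusion
\[
\pi_*\Omega^i_{X'}(\log\pi^*(Y))\hookrightarrow j_*\bigl(\Omega^i_X(\log Y)|_{X\smallsetminus C}\bigr).
\]
The crux is now the reflexivity input: because $\Omega^i_X(\log Y)$ is reflexive on the smooth variety $X$ it is $S_2$, so with $\codim_X(C)\geq 2$ the natural map $\Omega^i_X(\log Y)\to j_*\bigl(\Omega^i_X(\log Y)|_{X\smallsetminus C}\bigr)$ is an isomorphism, yielding the desired reverse containment. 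The obstacle one sidesteps this way is the direct local computation in blow-up charts, which would have to track how the multiplicity of $E$ in $\pi^*(Y)$ interacts with the pole orders of $\pi^*(\omega)$ along $E$; the reflexivity shortcut transports the verification back to $X\smallsetminus C$, where $\pi$ is an isomorphism and the logarithmic condition is automatic.
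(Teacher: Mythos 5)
Your proof is correct and follows essentially the same route as the paper: both establish the inclusion $\Omega^i_X(\log Y)\subseteq\pi_*\Omega^i_{X'}(\log\pi^*(Y))$ by pulling back, and both conclude equality from the fact that $\Omega^i_X(\log Y)$ is reflexive (equivalently a second syzygy over the regular ring $\calO_X$) together with $\codim_X C\geq 2$. The paper packages the reflexivity input as a local cohomology computation showing the cokernel has vanishing $H^0_{I_C}$, whereas you invoke the equivalent $S_2$ statement $\Omega^i_X(\log Y)\cong j_*j^*\Omega^i_X(\log Y)$; the content is the same.
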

\begin{proof}
The statement is local in the base, so we may assume that $X$ is
smooth and affine, and represent $\Omega^i_X(\log Y)$ by their modules
of global sections. We may assume further that $\codim_X(C)\geq 2$ as
otherwise $\pi$ is an isomorphism.  Set $R=\Gamma(X,\calO_X)$
and $I_C=\Gamma(X,\calI_C)$.

Pulling back differentials along $\pi$ is a linear faithful functor
and there are $\calO_X$-module inclusions
$\underbrace{\Omega^i_X(\log Y)}_{=:M}\subseteq
\underbrace{\Gamma(X',\Omega^i_{X'}(\log Y'))}_{=:M'}\subseteq
\underbrace{K\otimes_R\Omega^i_X}_{=:M''}$, $K$ denoting the field of
fractions of $R$.

Let $Q$ be defined by the exactness of $0\to M\to M'\to Q\to 0$. Since
$M''$, and hence also $M'$, is torsion-free, the
associated long exact sequence of local cohomology gives an injection
$H^0_{I_C}(Q)\into H^1_{I_C}(M)$. Now $M$ is a second syzygy, say the
kernel of the map $F_1\to F_0$ between free modules. Then if $L$ is
the image of this map, long exact sequences again show that
$H^1_{I_C}(M)=H^0_{I_C}(L)$ which vanishes as $L$ sits in a free
module and $I_C$ has height at least two. We have shown that
$H^0_{I_C}(Q)=0$ but since $Q$ is 
supported in $C$ this forces $Q=0$.
\end{proof}

We now consider lifting logarithmic derivations along a blow-up. While
logarithmic derivations and logarithmic $(n-1)$-forms can be
identified locally, globally
$\Omega^{n-1}_X(\log Y)$ is really $\Der_X(-\log
Y)\otimes\calO_X(Y)\otimes \Omega^n_Y$ and so exhibits different
behavior: the push-forward of the logarithmic derivations along
$\pi^*(Y)$ is, in general, not all of the logarithmic derivations
along $Y$.

\begin{thm}\label{thm-res-der}
In the setting of Notation \ref{ntn-blow-up},
\begin{asparaenum}
\item  an element of $\Der_X$ lifts to $X'$ if and only if it is logarithmic
along $\calI_C$; we have $\pi_*(\Der_{X'})=\Der_X(-\log \calI_C)$;
\item 
if $Y$ is $C$-saturated,
$\calI_Y:_{\calO_X}\calI_C=\calI_Y$, 
then 
\[
\pi_*(\Der_{X'}(-\log \pi^*(Y)))=\Der_X(-\log
  C)\cap \Der_X(-\log Y). 
\]
%In other words, a derivation on $X'$ is
  %global and logarithmic along the strict transform of $Y$ if and only
  %if it is global and logarithmic along the total transform $Y'$.
%   \uli{note that same statement holds for
%    $\Der_0$ when applying to $f$ (I guess D is a divisor then?)}
\end{asparaenum}
\end{thm}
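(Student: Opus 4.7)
The plan is to prove both parts through a chart-by-chart analysis of the blow-up, supplemented by a saturation argument for part (2). Arguing as at the start of Lemma \ref{lem-res-omega}, assume $\codim_X C \ge 2$ and work locally on an affine open $U = \Spec R$ where $\calI_C = (x_1, \ldots, x_k)$ is generated by part of a regular system of parameters. Then $\pi^{-1}(U)$ is covered by charts $U_i = \Spec R_i$ with $R_i = R[x_1/x_i, \ldots, \widehat{x_i/x_i}, \ldots, x_k/x_i] \subseteq R[1/x_i]$, and since $\pi_*\calO_{X'} = \calO_X$, every $\delta' \in \Der_{X'}(\pi^{-1}U)$ restricts to a well-defined derivation of $R$.

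For part (1), I would observe that a derivation $\delta$ of $R$ extends uniquely to $R[1/x_i]$ and preserves $R_i$ iff $\delta(x_j/x_i) \in R_i$ for every $j \ne i$ in $\{1, \ldots, k\}$; setting $a_l := \delta(x_l)$, this becomes $x_i a_j - x_j a_i \in x_i^2 R_i \cap R$. A direct monomial expansion yields $x_i^2 R_i \cap R = \calI_C^2$. Sufficiency is immediate when $\delta \in \Der_X(-\log \calI_C)$, as then $a_l \in \calI_C$ gives $x_i a_j - x_j a_i \in \calI_C^2$. For necessity, reduce modulo $\calI_C^2$: the identity $\bar a_j \bar x_i = \bar a_i \bar x_j$ inside the free $R/\calI_C$-module $\calI_C/\calI_C^2$ on the basis $\bar x_1, \ldots, \bar x_k$ forces $\bar a_i = \bar a_j = 0$ as soon as $k \ge 2$, whence $a_l \in \calI_C$ for all $l$. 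The local lifts then glue on overlaps by uniqueness of extension to the generic point.

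For part (2), I first aim to establish $\pi_*(\calI_{Y'}) = \calI_Y$ from the saturation hypothesis. Given $f \in R$ with $f \in \calI_Y R_i$ for each $i$, clearing denominators inside $R[1/x_i]$ produces $N_i$ with $f x_i^{N_i} \in \calI_Y$. Setting $N = \max_i N_i$ and applying pigeon-hole to monomial generators of $\calI_C^{kN}$ gives $f \cdot \calI_C^{kN} \subseteq \calI_Y$; iterating $\calI_Y :_{\calO_X} \calI_C = \calI_Y$ yields $\calI_Y :_{\calO_X} \calI_C^{kN} = \calI_Y$, hence $f \in \calI_Y$. With this identity in place, the forward inclusion of (2) is immediate: any $\delta' \in \Der_{X'}(-\log \pi^*Y)$ pushes to $\delta \in \Der_X(-\log \calI_C)$ by (1), and for $g \in \calI_Y$ the value $\delta'(g)$ is a section of $\calO_X$ lying in $\calI_{Y'}$, hence in $\pi_*\calI_{Y'} = \calI_Y$, so $\delta \in \Der_X(-\log Y)$. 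The reverse inclusion needs no saturation: given $\delta \in \Der_X(-\log \calI_C) \cap \Der_X(-\log Y)$, part (1) furnishes a lift $\delta' \in \Der_{X'}$, and on local generators $\pi^*(g)$ of $\calI_{Y'}$ we have $\delta'(\pi^*(g)) = \pi^*(\delta(g)) \in \pi^*(\calI_Y) \cdot \calO_{X'} = \calI_{Y'}$, so $\delta'$ is logarithmic along $\pi^*(Y)$.

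The main obstacle is the saturation identity $\pi_*(\calI_{Y'}) = \calI_Y$: without the hypothesis $\calI_Y :_{\calO_X} \calI_C = \calI_Y$, embedded associated primes of $\calO_X/\calI_Y$ supported on $C$ can strictly enlarge $\pi_*(\calI_{Y'})$ and break the forward inclusion in (2), which is why the saturation condition is essential there but not in the reverse direction.
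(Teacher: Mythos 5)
Your proof is correct, and it takes a genuinely more elementary route than the paper on both halves. For the lifting criterion in part~(1), the paper constructs $\pi^*(\delta)$ through a degree-zero derivation of the Rees algebra $\calR(R,\calI_C)$, and for the converse isolates a separate technical Lemma~\ref{lem-regSeqDerivation} (resting on Lemma~\ref{lemma2}) to show $\delta(\calI_C)\subseteq\calI_C$; you instead package both directions into the single chart-local equivalence $\delta$ preserves $R_i$ iff $x_ia_j-x_ja_i\in\calI_C^2$, and then read off logarithmicity from the freeness of $\calI_C/\calI_C^2$. This is a clean unification, but note that the hinge identity $x_i^2R_i\cap R=\calI_C^2$ is exactly where the regular-sequence hypothesis does real work — it is equivalent to $\calI_C^D:x_i^{D-2}=\calI_C^2$ for all $D$, i.e.\ to $\gr_{\calI_C}(R)$ being a (polynomial, hence domain) ring so the $\calI_C$-adic order is a valuation — and "a direct monomial expansion" undersells this; the paper's Lemma~\ref{lemma2} is precisely the technical substitute for that claim and deserves at least a citation or a short argument. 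For part~(2), where the paper computes $H^1_{\calI_C}(\calI_Y)\cong H^0_{\calI_C}(R/\calI_Y)=0$, you prove $\bigcap_i\calI_YR_i=\calI_Y$ by clearing denominators, pigeonholing a power of $\calI_C$, and iterating the saturation hypothesis — an equivalent but cohomology-free argument. Your closing observation that only the forward inclusion of~(2) uses $C$-saturation matches the paper exactly.
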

\begin{proof}
Again, we assume that $X$ is
smooth and affine,  and that $\codim_X(C)\geq 2$.
Set $R=\Gamma(X,\calO_X)$ and write $C=\Var(I_C)$.
% and $Y=\Var(\fraka)$ 

By shrinking $X$ we can assume that $k=\codim_X(C)$ and
$I_C=(f_1,\ldots,f_k)R\subseteq R$ where the $f_i$ form a regular
sequence in any order.  Let $\delta\in\Der_X$ be logarithmic along
$C$. Since $\delta\bullet(I_C)\subseteq I_C$, it induces a derivation
$\tilde \delta$ of $t$-degree zero on the Rees ring
\[
\calR=\calR(R,I_C)=\bigoplus_{i\in\NN} (I_Ct)^i.
\]
Derivations of $t$-degree zero induce derivations on each homogeneous
localization $\calR[(tf_i)^{-1}]$, stabilize the degree-zero part, and
then obviously agree on overlaps.  Since $X'=\Proj(\calR)$, $\tilde
\delta$ induces a global derivation $\delta'=:\pi^*(\delta)$ on
$\calO_{X'}$. The exceptional divisor $E_\pi$ is defined by the ideal
sheaf $I_Ct\calR$; as $\tilde\delta\bullet(I_Ct\calR)\subseteq I_Ct\calR$,
$\delta'$ is logarithmic along $E_\pi$. The $R$-morphism
$\delta\mapsto \pi^*(\delta)$ from $\Der_X$ to $\Der_{X'}(-\log
E_\pi)$ is injective, since $X$ and $X'$ are smooth and agree outside
$C$.

Now suppose additionally that $\delta\in\Der_X(-\log Y)$; then $\tilde
\delta$ preserves the extension of $I_Y$ to $\calR$, and hence
$\delta'$ is logarithmic along the total transform of $Y$.  Since
$I_C$ is prime and $I_Y:_R I_C=I_Y$, a dense open set of each component
of the proper transform $Y'$ is outside $E_\pi$. It follows that 
$\delta'$ is
also logarithmic along $Y'$ and so $\Der_X(-\log Y)\cap \Der_X(-\log
C)$ is a submodule of $\Gamma(X',\Der_{X'}(-Y'))$ and of
$\Gamma(X',\Der_{X'}(-\pi^*(Y)))$.

%Moreover, the exceptional divisor
%$E$ is
%defined by $It=\calR_1$, so $\delta'$ is logarithmic along $E$.
%By construction, $\pi^*(\delta)$ preserves $f\calR$ and thus is
%logarithmic along $D'=\pi^*(D)$. 

Conversely, let $\delta'$ be a global derivation on $X'$ and let $U_i$
be the standard open set $\Spec(\calR_i)$ in $X'$ where
$\calR_i=(R[tf_1,\ldots,tf_k,(tf_i^{-1})])_0=R[f_1/f_i,\ldots,f_k/f_i]$.
By definition, $\delta'$ induces a derivation on each $\calR_i$ which
we also denote $\delta'$.

Since $R$ is a domain, $R\subseteq \calR_i \subseteq R[1/f_i]$ and so
$\delta'\bullet(R)\subseteq\bigcap_i\calR_i\subseteq\bigcap_i
R[1/f_i]$.  The latter is the ideal transform of $R$ with respect to
$I_C$, and since the depth of $I_C$ on $R$ is at least $2$ we have
$(\bigcap R[1/f_i])/R=H^1_I(R)=0$.  Hence $\delta'$ is a derivation on
$R$ which we denote $\delta$. We must show that it is logarithmic
along the center of the blow-up. Note that $\pi^*(\delta)=\delta'$ again.

\begin{lem}\label{lem-regSeqDerivation}
Let $R$ be a domain, $f_1,\ldots,f_k,g$ a regular sequence in $R$, and
$\delta'$ a derivation on both $R$ and $R[f_1/g,\ldots,f_k/g]$. Let
$I$ be the $R$-ideal generated by $f_1,\ldots,f_k$. Then
$\delta'\bullet(I)\subseteq I+Rg$.
\end{lem}
\begin{proof}
 By hypothesis,
 $\delta'\bullet(f_i/g)=\delta'\bullet(f_i)/g-f_i\delta'\bullet(g)/g^2$
 can be written as $\sum_{j=0}^rP_j(f_1/g,\ldots,f_k/g)$ for suitable
 homogeneous polynomials $P_0,\ldots,P_r\in R[x_1,\ldots,x_k]$, $P_j$
 being of degree $j$. Choose such presentation with $r$ minimal.

We show first that one can assume $r\le 2$. Indeed, if $r>2$ clear
denominators to see that $P_r(f_1,\ldots,f_k)\in Rg$. By
Lemma~\ref{lemma2} below, $P_r(f_1,\ldots,f_k)=Q_r(f_1,\ldots,f_k)$
for a suitable homogeneous polynomial $Q_r(x)=g\sum_M q_Mx^M\in
gR[x_1,\ldots,x_k]$ using multi-index notation $x^M=\prod_ix_i^{m_i}$
with $|M|=\deg(P_r)=r$. Then, abbreviating $f_1,\ldots,f_k$ to $f$,
and $f_1/g,\ldots,f_k/g$ to
$f/g$,
\begin{eqnarray*}
P_r(f/g)=\sum_{M}q_M\cdot g\cdot (f/g)^M
=\sum_{M}q_M\cdot g^{1-|M|}\cdot f^M.
\end{eqnarray*}
For each multi-index $M$, let $i(M)$ be some index with $M_i>0$ and
denote $e_i$ the unit vector in direction $i$. Then
$\sum_{M}q_Mg^{1-|M|}f^M=\sum_{M}q_Mf_{i(M)}g^{1-|M|}f^{M-e_{i(M)}}$
can be viewed as evaluation of $\sum_{M}q_Mf_{i(M)}x^{M-e_{i(M)}}$ at
$f/g$. The latter is a polynomial of degree $r-1$. And so the
presentation $\sum_{j=0}^rP_i(f_1/g,\ldots,f_k/g)$ for
$\delta'(f_i/g)$ didn't use minimal
$r$.

Hence, we may assume that $r\le 2$.  In that case, we obtain, clearing
denominators,
\[
\delta'\bullet(f_i)g-\delta'\bullet(g)f_i=P_0(f)g^2+P_1(f)g+P_2(f).
\]
In particular,
$\delta'\bullet(f_i)g-P_0(f)g^2=P_1(f)g+P_2(f)+\delta'\bullet(g)f_i\in
I$. Since $g$ is regular on $R/I$,
$\delta'\bullet(f_i)-P_0(f)g\in I$ and so $\delta'\bullet(f_i)\in
R(f,g)$. This finishes the proof of Lemma \ref{lem-regSeqDerivation}
\end{proof}

Now return to the proof of the Theorem. Lemma
\ref{lem-regSeqDerivation} implies, looking at $U_i$, that
$\delta\bullet (f_i)\in I$ for each $i$. Hence, $\delta$ is
logarithmic along $I_C$, and therefore $\delta'$ is logarithmic along
$E_\pi$.

If $\delta'$ is logarithmic along $Y'$ then $\delta'$ preserves each
$I_Y\cdot \calR_i$, and hence $\delta$ sends $I_Y$ into
$\bigcap(I_Y\cdot \calR_i)$. However,
$\bigcap(I_Y\cdot\calR_i)/I_Y=H^1_{I_C}(I_Y)$. Since $C$ is cut out by
a regular sequence of length at least $2$ in $R$,
$H^1_{I_C}(I_Y)=H^0_{I_C}(R/I_Y)$ and this last module vanishes since
by hypothesis $R$ is $\calI_C$-torsion-free. It follows that
$\delta$ is logarithmic along $Y$ and the theorem follows. 
\end{proof}

\begin{lem}
\label{lemma2}
Let $R$ be a domain, and let $f_1,\ldots,f_m,g_1,\ldots,g_{m'}$ be a
regular sequence in every order on $R$. Write $f=f_1,\ldots,f_m$ and 
$g=g_1,\ldots,g_{m'}$.  

Suppose $P(x)\in R[x_1,\ldots,x_{m}]$ is homogeneous of degree $k$ and 
satisfies $P(f)\in Rg$. Then there exists $Q\in R[x_1,\ldots,x_m]$,
homogeneous of degree $k$,
with $P(f)=Q(f)$ and $Q(x)\in gR[x]$.
\end{lem}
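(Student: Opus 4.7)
Let $I := (f_1, \ldots, f_m) R$ and $J := (g_1, \ldots, g_{m'})R$. Since $P$ is homogeneous of degree $k$, $P(f)$ automatically lies in $I^k$, and by hypothesis it also lies in $J$; so the plan is to reduce the lemma to the inclusion
\[
I^k \cap J \subseteq J \cdot I^k.
\]
Granted this, one can write $P(f) = \sum_{|M|=k} a_M f^M$ with each $a_M \in J$, and then $Q(x) := \sum_{|M|=k} a_M x^M$ is a homogeneous degree-$k$ polynomial in $gR[x]$ satisfying $Q(f) = P(f)$, as required.

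The crucial sub-claim is that $g_1, \ldots, g_{m'}$ remains a regular sequence on $R/I^k$. Because $f_1, \ldots, f_m$ is a regular sequence, the associated graded $\bigoplus_{j\ge 0} I^j/I^{j+1}$ is a polynomial ring in $m$ variables over $R/I$, so each piece $I^j/I^{j+1}$ is a free $R/I$-module. Combined with the hypothesis that $f, g$ is regular in every order---which yields regularity of $g_1, \ldots, g_{m'}$ on $R/I$, and hence on any free $R/I$-module---we obtain regularity on each $I^j/I^{j+1}$. A snake-lemma argument shows that regularity of $g_1, \ldots, g_{m'}$ on both outer terms of a short exact sequence forces regularity on the middle term, so inducting on $j$ through $0 \to I^j/I^{j+1} \to R/I^{j+1} \to R/I^j \to 0$ propagates regularity all the way to $R/I^k$.

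The intersection formula itself I would prove by induction on $m'$. The base $m'=1$ is immediate from the sub-claim, since $g_1 y \in I^k$ forces $y \in I^k$, and therefore $g_1 y \in g_1 I^k$. For the inductive step, write $x \in J \cap I^k$ as $x = g_{m'} y + z$ with $z \in (g_1, \ldots, g_{m'-1})R$; regularity of $g_1, \ldots, g_{m'}$ on $R/I^k$ then gives $y \in I^k + (g_1, \ldots, g_{m'-1})R$, so after subtracting an element of $g_{m'} I^k \subseteq JI^k$ from $x$ we are reduced to an element of $(g_1, \ldots, g_{m'-1})R \cap I^k$, which by the inductive hypothesis lies in $(g_1,\ldots,g_{m'-1})I^k \subseteq JI^k$. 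The principal obstacle is the sub-claim about propagating regularity through the powers of $I$; this is precisely where the hypothesis \emph{regular in every order} (as opposed to merely in one order) is used in its full strength.
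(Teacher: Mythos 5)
Your argument is correct, and it proceeds by a genuinely different route than the paper's. The paper works by an explicit combinatorial induction: it first handles $m=1$ and linear $P$, then stratifies the degree-$k$ monomials into subsets $S_j$ indexed by the smallest variable occurring, and iteratively rewrites the polynomial so that terms get pushed into $gR[x]$, using regularity to divide by $f_j$ at each stage. You instead reduce the lemma to the ideal-theoretic inclusion $I^k \cap J \subseteq J\,I^k$ (with $I=Rf$, $J=Rg$), and prove this by showing $g$ remains a regular sequence on $R/I^k$: since $f$ is regular, $\gr_I R \cong (R/I)[T_1,\ldots,T_m]$, so each $I^j/I^{j+1}$ is $R/I$-free and carries $g$ as a regular sequence, and the short exact sequences $0\to I^j/I^{j+1}\to R/I^{j+1}\to R/I^j\to 0$ propagate regularity upward. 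The intersection formula then falls out by induction on $m'$. Your approach is cleaner and more conceptual, isolating the real content (a regular-sequence statement about $R/I^k$) from the bookkeeping; the paper's proof is more elementary in tooling but requires a somewhat delicate monomial-reshuffling induction.

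One small remark: your closing sentence attributes the need for \emph{regular in every order} to the propagation step, but in fact your argument only uses regularity in the single order $f_1,\ldots,f_m,g_1,\ldots,g_{m'}$ (to make $\gr_I R$ a polynomial ring, and to get $g$ regular on $R/I$). The ``every order'' flexibility is genuinely exploited in the paper's proof, where one needs $f_m$ to be regular on $R/(g,f_{<m})$, a permuted order; your proof sidesteps that.
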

\begin{proof}
\begin{asparaenum}
\item If $m=1$ then we have $rf^k\in Rg$ and so regularity implies that the
  lemma holds in that case.
\item If $P$ is linear, $\sum r_if_i\in Rg$ and so $r_m\in
  R(g,f_{<m})$ can be written as $\sum
  b_jg_j+\sum_{i<m}a_if_i$. Rewriting, we obtain $P(f)=\sum_{<m}
  r_if_i+r_mf_m=\sum_{<m}r_if_i+\sum_{<m} f_ma_if_i+ \sum
  f_mb_jg_j=\sum_{i<m}(f_ma_i+r_i)f_i+\sum f_mb_jg_j\in
  R(f_{<m,g})$. By induction on $m$, there is a linear polynomial
  $Q'(x_1\ldots,x_{m-1})$ over
  $R$ with coefficients in $Rg$ which when evaluated at $f_{<m}$
  yields $\sum_{i<m}(f_ma_i+r_i)f_i$. But then
  $P(f)=Q'(f_{<m})+f_m\sum b_jg_j$ and the lemma follows in this case.
\item The general case. We use induction on $m$. Consider the
  stratification of the monomials in $m$ variables of degree $k$ into
  the following subsets: $S_m=\{x_m^k\}$, $S_j=\{\text{monomials in
    $x_{\geq j}$}\}\smallsetminus S_{j+1}$ for $0<j<n$. So $S_j$ is
  the set of monomials in $R[x_j,\ldots,x_m]$ divisible $x_j$. We are
  going to construct a sequence of polynomial identities
\[
(j)\colon\qquad P^{(j)}(x)=\underbrace{\sum_{\ell<j}P^{(j)}_\ell(x)}_
{P^{(j)}_{<j}(x)}+P^{(j)}_j(x)+\underbrace{\sum_{\ell>j}P^{(j)}_\ell(x)}_
{P^{(j)}_{>j}(x)}
\]
such that the following hold: 
$P^{(j)}_\ell(x)$ is a sum of monomials indexed by $S_\ell$; 
$P^{(j)}_{>j}(x)\in gR[x_{j+1},\ldots,x_m]$; 
$P^{(j)}(f)=P(f)$ for all $j$.
%\begin{eqnarray}
%\label{P-display}
%P(x)=P_{>j}(x)+P_j(x)+P_{<j}(x)\in Rg. 
%\end{eqnarray}
%Note, that $P_j(x)$ only involves $x_{\geq
%  j}$, and $x_j$ divides $P_j(x)$. 

For $j=m$, take for $P^{(m)}_m(x)$ the part of $P(x)$ indexed by
$S_m$, and put $P^{(m)}_{>m}(x)=0$ and
$P^{(m)}_{<m}(x)=P(x)-P^{(m)}_m(x)$. 

For any $j$,  $P^{(j)}_{<j}(x)$ is
automatically in the ideal generated by $x_{<j}$, so evaluation at $x=f$
gives $P^{(j)}_j(f)=P(f)-P^{(j)}_{<j}(f)-P^{(j)}_{>j}(f)\in R(g,f_{<j})$.

Given identity $(j)$ we now construct identity $(j-1)$ with the
corresponding properties.  As $x_j$ divides $P^{(j)}_j(x)$,
$P^{(j)}_j(f)/f_j\in R$.  Then $R(g,f_{<j})\ni P^{(j)}_j(f)=f_j\cdot(
P^{(j)}_j(f)/f_j)$ implies by regularity that $P^{(j)}_j(f)/f_j\in
R(g,f_{<j})$. As the polynomial $P^{(j)}_j(x)/x_j\in R[x_{\geq j}]$ is
homogeneous of degree $k-1$, the inductive hypothesis asserts the
existence of a homogeneous polynomial of degree $k-1$ in
$x_j,\ldots,x_m$ with coefficients in $R(g,f_{<j})$ and which
evaluates at $f$ to $P^{(j)}_j(f)/f_j$. Explicitly, $P^{(j)}_j(f)$ is the value
at $x=f$ of the sum of one polynomial $P^{(j)}_{j,g}(x)\in gR[x]$ and
another polynomial in $f_{<j}R[x]$, both homogeneous of degree
$k-1$. The latter polynomial can be changed (without affecting its
value at $x\leadsto f$) into a polynomial
$P^{(j)}_{j,<}(x)\in x_{<j}R[x]$ which is then supported in
$S_{<j}$. Moving terms, if necessary, from $P^{(j)}_{j,g}(x)$ to
$P^{(j)}_{j,<}(x)$ one may assume that $P^{(j)}_{j,g}(x)\in
R[x_{>j-1}]$.  Now update identity $(j)$ as follows: 
set
$P^{(j-1)}_{<j-1}(x)$ to be the terms in
$P^{(j)}_{<j}(x)+P^{(j)}_{j,<}(x)$ supported in $S_{<j-1}$; let 
$P^{(j-1)}_{j-1}(x)$  be the other terms in this sum;
set $P^{(j-1)}_{>j-1}(x)=
P^{(j)}_{>j}(x)+P^{(j)}_{j,g}(x)$. The stipulated conditions then
hold for the new display.

It follows that from identity $(m)$ above we can proceed to identity
$(0)$. However, then $P^{(0)}(x)=P^{(0)}_{>0}(x)\in gR[x]$.
\qedhere
\end{asparaenum}
\end{proof}

%We now consider resolutions of singularities for divisors.  Let
%$\calI_Y$ be the sheaf of a divisor in the smooth algebraic
%$\CC$-scheme $X$. Recall that an embedded resolution of singularities
%is a proper map $\pi\colon X'\to X$ such that $\pi^*(\calI_Y)$ is
%analytically locally given by a monomial and has no
%global self-intersections.

We now record the existence of an embedded resolution of singularities
that is  particularly  well adapted to computing with logarithmic
vector fields.

\begin{prp}\label{prp-log-res}
For every divisor $Y$ on $X$, there is an algorithm for construction
of an embedded resolution of singularities that is a composition of
blow-ups such that at each step the center of the blow-up is smooth
and a union of logarithmic strata of the total transform of $Y$ under
the previous blow-ups.
\end{prp}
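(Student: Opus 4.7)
The plan is to invoke a functorial embedded resolution of singularities---for instance one of the algorithms cited via \cite{jarek}---and to argue that functoriality alone already forces each blow-up center to be a union of logarithmic strata of the current total transform.

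First I would apply such an algorithm to the pair $(X,Y)$, obtaining a sequence $X = X^{(0)} \leftarrow X^{(1)} \leftarrow \cdots \leftarrow X^{(N)}$ of blow-ups with smooth centers $C^{(k)} \subseteq X^{(k)}$ such that the total transform $Y^{(N)}$ is simple normal crossings. The key property I will extract from functoriality is that the construction of each $C^{(k)}$ commutes with every local analytic isomorphism of $X^{(k)}$ preserving $Y^{(k)}$: if $\varphi\colon U\to U'$ is such an isomorphism between opens of $X^{(k)}$, then $\varphi(C^{(k)}\cap U)=C^{(k)}\cap U'$.

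Next, for a logarithmic derivation $\delta \in \Der_{X^{(k)}}(-\log Y^{(k)})$ on an open $U\subseteq X^{(k)}$, the local flow $\phi^\delta_t$ is a one-parameter family of local analytic automorphisms preserving $Y^{(k)}|_U$. By functoriality, $\phi^\delta_t$ also preserves $C^{(k)}\cap U$; differentiating at $t=0$ yields $\delta(\calI_{C^{(k)}})\subseteq \calI_{C^{(k)}}$, so that $\Der_{X^{(k)}}(-\log Y^{(k)}) \subseteq \Der_{X^{(k)}}(-\log C^{(k)})$. Hence every non-vanishing logarithmic vector field on an open subset of $X^{(k)}$ is tangent to the smooth subvariety $C^{(k)}$ at each of its points, so its integral curves meeting $C^{(k)}$ remain in $C^{(k)}$. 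The equivalence relation $\approx_{Y^{(k)}}$ of Definition~\ref{dfn-log-strat} therefore does not link points of $C^{(k)}$ to points outside $C^{(k)}$; since $C^{(k)}$ is smooth and hence its connected components are irreducible, each such component is a union of irreducible cosets of $\approx_{Y^{(k)}}$, i.e., a union of logarithmic strata. The inductive setup is legitimate: logarithmic derivations lift through each blow-up by Theorem~\ref{thm-res-der}, so at every stage one really is speaking about the logarithmic stratification of the current total transform.

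The main obstacle is the precise form of functoriality required: one needs an algorithm whose centers are canonical with respect to arbitrary local analytic isomorphisms preserving the divisor, not merely with respect to étale base change. This is exactly the ``equivariance under smooth morphisms'' property of the resolution algorithms in \cite{jarek}, applied in the analytic category. A secondary technical point is that $\Var(f)$ need not be Saito-holonomic, so the union of logarithmic strata covering $C^{(k)}$ may a priori be infinite; but this causes no difficulty, since the statement of the proposition only asks that $C^{(k)}$ be \emph{some} union of such strata, not a finite one.
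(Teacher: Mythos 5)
Your proof is essentially the same argument the paper gives: invoke functorial resolution, flow a logarithmic vector field to produce a local one-parameter family of automorphisms of the pair, use functoriality to conclude that the centers are stable under the flow, and deduce that each center is a union of logarithmic strata. The main expository differences are cosmetic: you spell out the infinitesimal argument (differentiating the flow at $t=0$ to get $\delta(\calI_{C^{(k)}})\subseteq\calI_{C^{(k)}}$) and the step from ``stable under all logarithmic flows'' to ``union of cosets of $\approx_{Y^{(k)}}$, hence of strata,'' both of which the paper compresses into one sentence. Your remark that no Saito-holonomicity is required and the union of strata may be infinite is a correct and sensible observation. The one unnecessary detour is the appeal to Theorem~\ref{thm-res-der} to ``legitimize the inductive setup'': at each stage one works directly with $\Der_{X^{(k)}}(-\log Y^{(k)})$, which is defined intrinsically on $X^{(k)}$ without any need to lift derivations from $X$; what the induction actually needs is that functoriality of the algorithm persists after each blow-up, which it does by design. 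Lifting is used in Corollary~\ref{cor-log-res}, not here.
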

\begin{proof}
By \cite{jarek}, there is an embedded resolution of singularities
$\pi\colon X'\to X$ that is a sequence of blow-ups
$\pi=\pi_k\circ\ldots\circ\pi_1$  with smooth
centers that furthermore is functorial: for any analytic isomorphism
$\iota\colon X\to X$ there is an analytic isomorphism $\tilde\iota
\colon X'\to X'$ such that
$\pi\circ\tilde\iota=\iota\circ\pi$.

Take $\delta\in\Der_X(-\log Y)$ and choose $\x_0\in X$ with
$\delta(\x_0)\neq 0$. By the Picard--Lindel\"of theorem, there is a
foliation of integrating curves $\gamma_\x(t)$ for $\delta$ at all
$\x$ near $\x_0$: $\gamma_\x(0)=\x$ and $\frac{\de}{\de t}(\gamma_\x(t))$
is $\delta$ evaluated at $\gamma_\x(t)$ for small $t$.  It follows
that the assignment $\Phi(\x,t)\colon \x\mapsto \gamma_\x(t)$ is
well-defined for $\x$ sufficiently near $\x_0$, and for all $t\ll
1$. Since $\delta$ is analytic and $\gamma_\x(t)$ is defined via an
integral, $\Phi$ is analytic. %, with inverse induced by the vector
%field
%$-\delta$.

As $\delta$ is logarithmic along $Y$, $\x\in Y$ implies
$\gamma_\x(t)\in Y$. Hence, $\Phi(-,t)$ is a local automorphism of the
pair $(X,Y)$ near $\x\in Y$ for $t\ll 1$.  If $\pi_1\colon X'_1\to X$ is
the first blow-up in the resolution, then by functoriality $\Phi(-,t)$
lifts to an analytic family of local automorphisms of $(X'_1,
\pi^*Y)$. This is only possible if the center $C_1$ of the
blow-up is stable under $\delta$. This being true for all logarithmic
vector fields along $Y$, $C_1$ must be a union of logarithmic
strata of $Y$. The proposition follows by iterating the argument.
\end{proof}

\begin{cor}\label{cor-log-res}
If $Y\subseteq X$ is a divisor in a smooth $\CC$-scheme then there is
a resolution of singularities $\pi\colon X'\to X$ such that 
$\pi_*(\Der_{X'}(-\log\pi^*(Y)))=\Der_X(-\log Y)$ and
$\pi_*(\Omega^i_{X'}(\log \pi^*(Y)))=\Omega^i_X(\log Y)$.
\end{cor}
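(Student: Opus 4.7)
I will construct the required resolution using Proposition \ref{prp-log-res} and show that at each blow-up in the resulting composition both push-forward identities are preserved; the claim then follows since $\pi_*$ factorizes through the composition. Write $\pi = \pi_k \circ \cdots \circ \pi_1$ with $\pi_j \colon X_j \to X_{j-1}$ the blow-up of a smooth center $C_j$ that is a union of logarithmic strata of the total transform $Y_{j-1}$ (with $X_0 = X$ and $Y_0 = Y$). Total transforms of divisors under blow-ups are divisors, so each $Y_{j-1}$ is again a divisor. Drop the index $j$ below.

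The identity for logarithmic forms is exactly Lemma \ref{lem-res-omega}. For derivations I invoke Theorem \ref{thm-res-der}(b). The saturation hypothesis $\calI_Y :_{\calO_X} \calI_C = \calI_Y$ is automatic since $Y$ is a divisor (so locally $\calI_Y = (f)$ has only height-one associated primes) and $\codim_X(C) \geq 2$ -- any codimension-one smooth center gives a trivial blow-up and may be discarded. The theorem yields
\[
\pi_*(\Der_{X'}(-\log \pi^*(Y))) = \Der_X(-\log C) \cap \Der_X(-\log Y),
\]
so it remains to establish the inclusion $\Der_X(-\log Y) \subseteq \Der_X(-\log C)$.

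Given $\delta \in \Der_X(-\log Y)$, the flow argument used in the proof of Proposition \ref{prp-log-res} shows that on the open set where $\delta$ does not vanish, the local flow of $\delta$ is an analytic family of automorphisms of the pair $(X,Y)$; by Definition \ref{dfn-log-strat} these flows preserve every logarithmic stratum of $Y$, and hence the union $C$. Infinitesimally this gives $\delta(x) \in T_x C$ at every nonvanishing point $x \in C$, so the section $\bar\delta$ of the normal bundle $(TX/TC)|_C$ induced by $\delta|_C$ vanishes on an open dense subset of each component of $C$ along which $\delta$ is not identically zero; analyticity then forces $\bar\delta \equiv 0$, which is equivalent to $\delta(\calI_C) \subseteq \calI_C$ since $C$ is smooth. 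Thus $\delta \in \Der_X(-\log C)$ as required. Iterating over the $k$ blow-ups completes the proof. The step I expect to require the most care is precisely this extension of tangency from the nonvanishing locus of $\delta$ to all of $C$; the remainder is a direct stepwise application of Lemma \ref{lem-res-omega} and Theorem \ref{thm-res-der}(b) combined with Proposition \ref{prp-log-res}.
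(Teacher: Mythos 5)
Your proof is correct and follows essentially the same route as the paper: apply Proposition \ref{prp-log-res} to obtain a resolution with centers that are unions of logarithmic strata, verify the saturation hypothesis of Theorem \ref{thm-res-der} by noting that $\calO_X/\calI_Y$ is a hypersurface ring (hence has no embedded primes, so a center of codimension $\geq 2$ contains a nonzero divisor on it), apply Theorem \ref{thm-res-der}(b) and Lemma \ref{lem-res-omega} stepwise. The one point you spell out that the paper leaves implicit — that $\Der_X(-\log Y)\subseteq\Der_X(-\log C)$, so that the intersection in Theorem \ref{thm-res-der}(b) collapses to $\Der_X(-\log Y)$ — is in fact already established directly inside the proof of Proposition \ref{prp-log-res} (there the functoriality argument proves the center is stable under every logarithmic vector field, and the union-of-strata conclusion is a corollary of that stability). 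You instead reverse the logic, deducing stability from the union-of-strata description via Definition \ref{dfn-log-strat} plus an analyticity argument for the normal-bundle section; this is a valid self-contained route and requires the slight extra care you flag about extending tangency across the vanishing locus of $\delta$, but it re-proves something you could have cited directly.
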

\begin{proof}
For
any divisor $Y_i$ and any smooth center $C_i\subseteq Y_i$ of codimension $2$ or
more on smooth $X_i$ the ideal quotient $\calI_{Y_i} \colon_{\calO_{X_i}} \calI_{C_i}$
equals $\calI_{Y_i}$. Thus Theorem \ref{thm-res-der} applies in each
step of the resolution obtained in Proposition \ref{prp-log-res}.
The final claim follows from Lemma \ref{lem-res-omega}.
\end{proof}

%\begin{rmk}
%For the pair $(Y,D)$ as above, consider the stratification given by
%$Y_0=Y\minus D$ and $Y_k$ is for $k>0$ the piece $D_{k-1}$ of the
%canonical Whitney stratification for $D$. Then, similarly  to the above
%argument, there is a resolution of 
%singularities for $(Y,D)$ such that each center is contained in a
%canonical Whitney stratum.\uli{this is a bit silly unless i can show
%  equality}
%\end{rmk}
Not every logarithmic stratum will appear as a center in a resolution: 
\begin{exa}
Let $Y=\Var(xy(x+y)(x+ty))\subseteq \CC^3$. Then every point of the
$t$-axis is a logarithmic stratum. In any reasonable resolution of the
pair $(\CC^3,Y)$, of all points on the $t$-axis, the only
zero-dimensional canonical Whitney strata that feature as blow-up
centers are $(0,0,0)$ and $(0,0,1)$.
\schluss
\end{exa} 

\section*{Acknowledgments}
I would like to express my gratitude to Graham Denham for many
conversations about \cite{CDFV}, to Nero Budur and Luis
Narvaez-Macarro for very helpful comments and criticisms, and to
Viktor Levandovskyy for help with some computations using PLURAL
\cite{Plural}.

I am very grateful to Alex Dimca and Morihiko Saito for catching an
error in an earlier version, and to Morihiko Saito for generously
sharing his opinions on Theorem \ref{thm-jac-milnor} and Example
\ref{exa-ziegler}.

\def\scr{\mathcal}

%%%%%%%%%%%%%%%%%%%%%%%%%%%%%%%%%%%%%%%%%%%%%%%%%%%%%%%%%%%%%%%%%%%%%%%%%%%%%%%

\bibliographystyle{plain}
\bibliography{zieg}

\def\cprime{$'$}
\begin{thebibliography}{10}

\bibitem{Bernstein-Nauk72}
I.~N. Bern{\v{s}}te{\u\i}n.
\newblock Analytic continuation of generalized functions with respect to a
  parameter.
\newblock {\em Funkcional. Anal. i Prilo\v zen.}, 6(4):26--40, 1972.

\bibitem{Bories-complutense}
Bart Bories.
\newblock Zeta functions and {B}ernstein-{S}ato polynomials for ideals in
  dimension two.
\newblock {\em Rev. Mat. Complut.}, 26(2):753--772, 2013.

\bibitem{BrodmannSharp-LC}
M.~P. Brodmann and R.~Y. Sharp.
\newblock {\em Local cohomology: an algebraic introduction with geometric
  applications}, volume~60 of {\em Cambridge Studies in Advanced Mathematics}.
\newblock Cambridge University Press, Cambridge, 1998.

\bibitem{Budur-survey}
Nero Budur.
\newblock Singularity invariants related to {M}ilnor fibers: survey.
\newblock In {\em Zeta functions in algebra and geometry}, volume 566 of {\em
  Contemp. Math.}, pages 161--187. Amer. Math. Soc., Providence, RI, 2012.

\bibitem{Budur-BSidealsAndLocalSystems}
Nero Budur.
\newblock Bernstein-{S}ato ideals and local systems.
\newblock {\em Ann. Inst. Fourier (Grenoble)}, 65(2):549--603, 2015.

\bibitem{BudurMustataTeitler-GeomDed11}
Nero Budur, Mircea Musta{\c{t}}{\u{a}}, and Zach Teitler.
\newblock The monodromy conjecture for hyperplane arrangements.
\newblock {\em Geom. Dedicata}, 153:131--137, 2011.

\bibitem{BudurSaitoYuzvinsky-JLMS11}
Nero Budur, Morihiko Saito, and Sergey Yuzvinsky.
\newblock On the local zeta functions and the {$b$}-functions of certain
  hyperplane arrangements.
\newblock {\em J. Lond. Math. Soc. (2)}, 84(3):631--648, 2011.
\newblock With an appendix by Willem Veys.

\bibitem{CalderonNarvaez-logCompThm}
F.~J. Calder{\'o}n~Moreno and L.~Narv{\'a}ez~Macarro.
\newblock On the logarithmic comparison theorem for integrable logarithmic
  connections.
\newblock {\em Proc. Lond. Math. Soc. (3)}, 98(3):585--606, 2009.

\bibitem{CalderonNarvaez-theModuleDfs}
Francisco Calder{\'o}n-Moreno and Luis Narv{\'a}ez-Macarro.
\newblock The module {$\scr Df^s$} for locally quasi-homogeneous free divisors.
\newblock {\em Compositio Math.}, 134(1):59--74, 2002.

\bibitem{Calderon-AnnSciEN99}
Francisco~J. Calder{\'o}n-Moreno.
\newblock Logarithmic differential operators and logarithmic de {R}ham
  complexes relative to a free divisor.
\newblock {\em Ann. Sci. \'Ecole Norm. Sup. (4)}, 32(5):701--714, 1999.

\bibitem{CastroUcha-Steklov02}
F.~J. Castro-Jim{\'e}nez and J.~M. Ucha.
\newblock Free divisors and duality for {$\scr D$}-modules.
\newblock {\em Tr. Mat. Inst. Steklova}, 238(Monodromiya v Zadachakh Algebr.
  Geom. i Differ. Uravn.):97--105, 2002.

\bibitem{CastroGagoHartilloUcha-Revista07}
Francisco-Jes{\'u}s Castro-Jim{\'e}nez, Jes{\'u}s Gago-Vargas,
  Mar{\'{\i}}a-Isabel Hartillo-Hermoso, and Jos{\'e}-Mar{\'{\i}}a Ucha.
\newblock A vanishing theorem for a class of logarithmic {$\scr D$}-modules.
\newblock In {\em Proceedings of the {XVI}th {L}atin {A}merican {A}lgebra
  {C}olloquium ({S}panish)}, Bibl. Rev. Mat. Iberoamericana, pages 171--184.
  Rev. Mat. Iberoamericana, Madrid, 2007.

\bibitem{CDFV-Canadian11}
D.~Cohen, G.~Denham, M.~Falk, and A.~Varchenko.
\newblock Critical points and resonance of hyperplane arrangements.
\newblock {\em Canad. J. Math.}, 63(5):1038--1057, 2011.

\bibitem{CDFV}
D.~Cohen, G.~Denham, M.~Falk, and A.~Varchenko.
\newblock Critical points and resonance of hyperplane arrangements.
\newblock {\em Canad. J. Math.}, 63(5):1038--1057, 2011.

\bibitem{DamonMond}
James Damon and David Mond.
\newblock {${\scr A}$}-codimension and the vanishing topology of discriminants.
\newblock {\em Invent. Math.}, 106(2):217--242, 1991.

\bibitem{DamonPike}
James Damon and Brian Pike.
\newblock Solvable group representations and free divisors whose complements
  are {$K(\pi,1)$}'s.
\newblock {\em Topology Appl.}, 159(2):437--449, 2012.

\bibitem{Singular}
Wolfram Decker, Gert-Martin Greuel, Gerhard Pfister, and Hans Sch\"onemann.
\newblock {\sc Singular} --- {A} computer algebra system for polynomial
  computations.
\newblock \url{http://www.singular.uni-kl.de}, 2015.

\bibitem{Deligne-HodgeII}
Pierre Deligne.
\newblock Th\'eorie de {H}odge. {II}.
\newblock {\em Inst. Hautes \'Etudes Sci. Publ. Math.}, (40):5--57, 1971.

\bibitem{Deligne-HodgeIII}
Pierre Deligne.
\newblock Th\'eorie de {H}odge. {III}.
\newblock {\em Inst. Hautes \'Etudes Sci. Publ. Math.}, (44):5--77, 1974.

\bibitem{Denef-Bourbaki}
Jan Denef.
\newblock Report on {I}gusa's local zeta function.
\newblock {\em Ast\'erisque}, (201-203):Exp.\ No.\ 741, 359--386 (1992), 1991.
\newblock S{\'e}minaire Bourbaki, Vol. 1990/91.

\bibitem{DenefLoeser-JAG98}
Jan Denef and Fran{\c{c}}ois Loeser.
\newblock Motivic {I}gusa zeta functions.
\newblock {\em J. Algebraic Geom.}, 7(3):505--537, 1998.

\bibitem{DenhamSchulze}
Graham Denham and Mathias Schulze.
\newblock Complexes, duality and {C}hern classes of logarithmic forms along
  hyperplane arrangements.
\newblock In {\em Arrangements of hyperplanes---{S}apporo 2009}, volume~62 of
  {\em Adv. Stud. Pure Math.}, pages 27--57. Math. Soc. Japan, Tokyo, 2012.

\bibitem{DimcaSaito-Koszul1212.1081}
Alexandru Dimca and Morihiko Saito.
\newblock Koszul complexes and spectra of projective hypersurfaces with
  isolated singularities.
\newblock {\em arXiv:1212.1081}, v.~4:1--31, 2014.

\bibitem{DimcaSticlaru}
Alexandru Dimca and Gabriel Sticlaru.
\newblock Nearly free divisors and rational cuspidal curves.
\newblock {\em arXiv:1505.00666}, v.~3:1--19, 2015.

\bibitem{EsnaultViehweg}
H{\'e}l{\`e}ne Esnault and Eckart Viehweg.
\newblock {\em Lectures on vanishing theorems}, volume~20 of {\em DMV Seminar}.
\newblock Birkh\"auser Verlag, Basel, 1992.

\bibitem{EvansGriffith-Syzygies}
E.~Graham Evans and Phillip Griffith.
\newblock {\em Syzygies}, volume 106 of {\em London Mathematical Society
  Lecture Note Series}.
\newblock Cambridge University Press, Cambridge, 1985.

\bibitem{GrangerMondNietoSchulze-Fourier09}
Michel Granger, David Mond, Alicia Nieto-Reyes, and Mathias Schulze.
\newblock Linear free divisors and the global logarithmic comparison theorem.
\newblock {\em Ann. Inst. Fourier (Grenoble)}, 59(2):811--850, 2009.

\bibitem{GrangerSchulze-Compos06}
Michel Granger and Mathias Schulze.
\newblock On the formal structure of logarithmic vector fields.
\newblock {\em Compos. Math.}, 142(3):765--778, 2006.

\bibitem{M2}
Daniel~R. Grayson and Michael~E. Stillman.
\newblock Macaulay2, a software system for research in algebraic geometry.
\newblock Available at \href{http://www.math.uiuc.edu/Macaulay2/}%
  {http://www.math.uiuc.edu/Macaulay2/}.

\bibitem{Plural}
Gert-Martin Greuel, Viktor Levandovskyy, and Hans Sch\"onemann.
\newblock {PLURAL} (2000-2005). {N}on-commutative extension of {SINGULAR}, now
  part of {SINGULAR}.

\bibitem{HerzogSimisVasconcelos}
J.~Herzog, A.~Simis, and W.~V. Vasconcelos.
\newblock Approximation complexes of blowing-up rings.
\newblock {\em J. Algebra}, 74(2):466--493, 1982.

\bibitem{Holm-CiA04}
P{\"a}r Holm.
\newblock Differential operators on hyperplane arrangements.
\newblock {\em Comm. Algebra}, 32(6):2177--2201, 2004.

\bibitem{24h}
Srikanth~B. Iyengar, Graham~J. Leuschke, Anton Leykin, Claudia Miller, Ezra
  Miller, Anurag~K. Singh, and Uli Walther.
\newblock {\em Twenty-four hours of local cohomology}, volume~87 of {\em
  Graduate Studies in Mathematics}.
\newblock American Mathematical Society, Providence, RI, 2007.

\bibitem{Kashiwara-vanishing83}
M.~Kashiwara.
\newblock Vanishing cycle sheaves and holonomic systems of differential
  equations.
\newblock In {\em Algebraic geometry ({T}okyo/{K}yoto, 1982)}, volume 1016 of
  {\em Lecture Notes in Math.}, pages 134--142. Springer, Berlin, 1983.

\bibitem{Kashiwara-bfu}
Masaki Kashiwara.
\newblock {$B$}-functions and holonomic systems. {R}ationality of roots of
  {$B$}-functions.
\newblock {\em Invent. Math.}, 38(1):33--53, 1976/77.

\bibitem{KashiwaraBook}
Masaki Kashiwara.
\newblock {\em {$D$}-modules and microlocal calculus}, volume 217 of {\em
  Translations of Mathematical Monographs}.
\newblock American Mathematical Society, Providence, RI, 2003.
\newblock Translated from the 2000 Japanese original by Mutsumi Saito, Iwanami
  Series in Modern Mathematics.

\bibitem{Malgrange-isolee}
B.~Malgrange.
\newblock Le polyn\^ome de {B}ernstein d'une singularit\'e isol\'ee.
\newblock In {\em Fourier integral operators and partial differential equations
  ({C}olloq. {I}nternat., {U}niv. {N}ice, {N}ice, 1974)}, pages 98--119.
  Lecture Notes in Math., Vol. 459. Springer, Berlin, 1975.

\bibitem{NarvaezMebkhout-AnnEcNormSup91}
Z.~Mebkhout and L.~Narv{\'a}ez-Macarro.
\newblock La th\'eorie du polyn\^ome de {B}ernstein-{S}ato pour les alg\`ebres
  de {T}ate et de {D}work-{M}onsky-{W}ashnitzer.
\newblock {\em Ann. Sci. \'Ecole Norm. Sup. (4)}, 24(2):227--256, 1991.

\bibitem{Narvaez-Contemp08}
L.~Narv{\'a}ez~Macarro.
\newblock Linearity conditions on the {J}acobian ideal and
  logarithmic-meromorphic comparison for free divisors.
\newblock In {\em Singularities {I}}, volume 474 of {\em Contemp. Math.}, pages
  245--269. Amer. Math. Soc., Providence, RI, 2008.

\bibitem{PeskineSzpiro}
C.~Peskine and L.~Szpiro.
\newblock Dimension projective finie et cohomologie locale. {A}pplications \`a
  la d\'emonstration de conjectures de {M}. {A}uslander, {H}. {B}ass et {A}.
  {G}rothendieck.
\newblock {\em Inst. Hautes \'Etudes Sci. Publ. Math.}, (42):47--119, 1973.

\bibitem{Saito-logarithmic}
Kyoji Saito.
\newblock Theory of logarithmic differential forms and logarithmic vector
  fields.
\newblock {\em J. Fac. Sci. Univ. Tokyo Sect. IA Math.}, 27(2):265--291, 1980.

\bibitem{Saito-Bull94}
Morihiko Saito.
\newblock On microlocal {$b$}-function.
\newblock {\em Bull. Soc. Math. France}, 122(2):163--184, 1994.

\bibitem{Saito-Compos07}
Morihiko Saito.
\newblock Multiplier ideals, {$b$}-function, and spectrum of a hypersurface
  singularity.
\newblock {\em Compos. Math.}, 143(4):1050--1068, 2007.

\bibitem{Saito-HilbertBernstein2015}
Morihiko Saito.
\newblock Hilbert series of graded {M}ilnor algebras and roots of
  {B}ernstein--{S}ato polynomials.
\newblock {\em arXiv:1509.06288}, v.~7:1--15, 2015.

\bibitem{Saito-bfu}
Morihiko Saito.
\newblock Bernstein--{S}ato polynomials of hyperplane arrangements.
\newblock {\em arXiv:math/0602527}, v.~8:1--31, 2016.

\bibitem{Terao-JAlg02}
Hiroaki Terao.
\newblock Algebras generated by reciprocals of linear forms.
\newblock {\em J. Algebra}, 250(2):549--558, 2002.

\bibitem{Torrelli-BullMathSoc04}
Tristan Torrelli.
\newblock On meromorphic functions defined by a differential system of order 1.
\newblock {\em Bull. Soc. Math. France}, 132(4):591--612, 2004.

\bibitem{vanStratenWarmt}
Duco van Straten and Thorsten Warmt.
\newblock Gorenstein-duality for one-dimensional almost complete
  intersections---with an application to non-isolated real singularities.
\newblock {\em Math. Proc. Cambridge Philos. Soc.}, 158(2):249--268, 2015.

\bibitem{W-Bernstein}
Uli Walther.
\newblock Bernstein-{S}ato polynomial versus cohomology of the {M}ilnor fiber
  for generic hyperplane arrangements.
\newblock {\em Compos. Math.}, 141(1):121--145, 2005.

\bibitem{Whitney-Annals65}
Hassler Whitney.
\newblock Tangents to an analytic variety.
\newblock {\em Ann. of Math. (2)}, 81:496--549, 1965.

\bibitem{jarek}
Jaros{\l}aw W{\l}odarczyk.
\newblock Simple {H}ironaka resolution in characteristic zero.
\newblock {\em J. Amer. Math. Soc.}, 18(4):779--822 (electronic), 2005.

\end{thebibliography}

%%%%%%%%%%%%%%%%%%%%%%%%%%%%%%%%%%%%%%%%%%%%%%%%%%%%%%%%%%%%%%%%%%%%%%%%%%%%%%%
\end{document}